\DeclareMathOperator{\supp}{supp}
\DeclareMathOperator{\sgn}{sgn}
\theoremstyle{plain}
\newtheorem{theorem}{Theorem}[section]
\newtheorem{lemma}[theorem]{Lemma}
\newtheorem{rem}[theorem]{Remark}
\newtheorem{prop}[theorem]{Proposition}
\newtheorem{ex}[theorem]{Example}
\newcounter{A}
\newcounter{B}
\newcounter{C}
\newtheorem{thma}[A]{Theorem}
\newtheorem{thmb}[B]{Theorem}
\newtheorem{thmc}[C]{Theorem}
\numberwithin{equation}{section}
\newcommand{\mc}{\mathcal}
\newcommand{\ab}{=_\mathrm{ae}}
\newcommand{\Spec}{\mathrm{Spec}}
\newcommand{\Op}{\mathrm{Op}}
\newcommand{\Lapl}{\mathcal{L}}
\begin{document}

\title[Magnetic Steklov problem on surfaces]{Magnetic Steklov problem on surfaces}

\author{Mihajlo Ceki\'c}
\address{Institut f\"ur Mathematik\\
Wintherthurerstrasse 190\\
CH-8057 Z\"urich\\ Switzerland}
\email{mihajlo.cekic@math.uzh.ch}

\author{Anna Siffert}
\address{Mathematisches Institut\\
Einsteinstr. 62\\
48149 M\" unster\\
Germany}
\email{ASiffert@uni-muenster.de}
\thanks{}

\begin{abstract}
The magnetic Dirichlet-to-Neumann map encodes the voltage-to-current measurements under the influence of a magnetic field. In the case of surfaces, we provide precise spectral asymptotics expansion (up to arbitrary polynomial power) for the eigenvalues of this map. Moreover, we consider the inverse spectral problem and from the expansion we show that the spectrum of the magnetic Dirichlet-to-Neumann map, in favourable situations, uniquely determines the number and the length of boundary components, the parallel transport and the magnetic flux along boundary components. In general, we show that the situation complicates compared to the case when there is no magnetic field. For instance, there are plenty of examples where the expansion does \emph{not} detect the number of boundary components, and this phenomenon is thoroughly studied in the paper.
\end{abstract}
\maketitle

\section{Introduction}\label{sec:introduction}
In the last two decades the Steklov problem has become a very active and important research topic within the fields of geometric analysis and inverse problems \cite{Colbois-Girouard-Gordon-Sher-24}. The \emph{magnetic} Steklov problem, which also takes into account a magnetic field, has attracted increasing attention in recent years, see e.g. \cite{Liu-Tan-23, Helffer-Nicoleau-24, Colbois-Provenzano-Savo-22, Provenzano-Savo-23, Chakradhar-Gittins-Habib-Peyerimhoff-24}. In the present manuscript we study the spectral asymptotics of the magnetic Dirichlet-to-Neumann map on surfaces. We apply it to show that certain invariants of the magnetic Dirichlet-to-Neumann map can be recovered from its spectrum.

\medskip

Throughout let $(M, g)$ be a smooth, compact, orientable Riemannian surface with non-empty boundary $\partial M$. Let $A$ be a smooth purely imaginary $1$-form (modelling a magnetic potential), and let $q$ be a smooth real-valued potential function. Let $d_A := d + A$ be the covariant derivative, and denote by $d_A^*$ its formal adjoint. Consider the Schr\"odinger operator (or the magnetic Laplacian)
\[
    \Lapl_{g, A, q} := d_A^*d_A + q.
\]
It is a second order, formally self-adjoint, elliptic differential operator. In the manuscript we make the standing assumption that $0$ is not a Dirichlet eigenvalue of $\Lapl_{g, A, q}$. The \textit{Dirichlet-to-Neumann map} $\Lambda_{g, A, q}: H^{\frac{1}{2}}(\partial M) \to H^{-\frac{1}{2}}(\partial M)$ is defined by solving the Dirichlet problem (which has a unique solution by the previous assumption)
\begin{align*}
    \Lapl_{g, A, q} u &= 0, \text{ on } M,\\
    u &= f, \text{ on } \partial M,
\end{align*}
where $f\in H^{\frac{1}{2}}(\partial M)$. Then
\[
    \Lambda_{g, A, q}f = (d_A u)|_{\partial M}(\nu),
\]
where $\nu$ denotes the outer pointing boundary normal. The \emph{magnetic Steklov problem} is to study the spectrum $\Spec(\Lambda_{g, A, q})$ of $\Lambda_{g, A, q}$ (see below for definition).

\subsection{Spectral asymptotics} The operator $\Lambda_{g, A, q}$ is an elliptic, formally self-adjoint, pseudodifferential operator of order $1$ on $\partial M$, see \cite{Cekic-20}. Therefore its spectrum $\Spec(\Lambda_{g, A, q})$ is discrete, and we may enumerate it in the non-decreasing order by 
\[
    \sigma_1 \leq \sigma_2 \leq \sigma_3 \leq \dotsb,
\]
counting multiplicity. Weyl's law gives us that
\[
    \sigma_n = \frac{\pi}{\ell(\partial M)} n + \mc{O}(1), \quad n \to \infty,
\]
and so the spectrum determines $\ell(\partial M)$. In this work we are interested in much finer properties of the spectrum, and we will establish expansions into arbitrary powers of $n$ as follows.

\begin{thma}\label{thma}
    Let $(M, g)$ be a compact Riemannian surface, equipped with a smooth purely imaginary $1$-form $A$, and a smooth real-valued potential function $q$. Write  $N_{1}, \dotsc, N_m$ for the connected components of $\partial M$. Then the spectrum of $\Lambda_{g, A, q}$ is given, in the sense of multisets (i.e. allowing repetition) by
    \begin{align*}
        \Spec(\Lambda_{g, A, q}) = \bigcup_{j = 1}^m \mc{S}_j,
    \end{align*}
    where for each $j = 1, \dotsc, m$, the multiset $\mc{S}_j$ can be enumerated as $(\lambda^{(j)}_{n})_{n \in \mathbb{Z}}$, and the following asymptotic expansion holds:
    \begin{align*}
        \lambda_{n}^{(j)} &= \sum_{k = 0}^\infty b^{(j)}_{k} (1) n^{1 - k}, \quad  n \to \infty,\\
        \lambda_{n}^{(j)} &= \sum_{k = 0}^\infty b^{(j)}_{k}(-1)|n|^{1 - k}, \quad n \to -\infty,
    \end{align*}
    in the sense that for each $k_0 \in \mathbb{Z}_{\geq 0}$, we have 
    \begin{align*}
        \lambda_n^{(j)} - \sum_{k = 0}^{k_0} b_k^{(j)}(1)n^{1 - k}  &= \mathcal{O}(n^{- k_0}), \quad n \to \infty,\\
        \lambda_n^{(j)} - \sum_{k = 0}^{k_0} b_k^{(j)}(-1)|n|^{1 - k} &= \mathcal{O}(|n|^{- k_0}), \quad n \to -\infty.
    \end{align*}
    The coefficients $(b_k^{(j)}(\pm 1))_{k \in \mathbb{Z}_{\geq 0}}$ are determined by an algorithmic procedure and depend only on the full jets of $g, A, q$ on $N_{j}$. In particular, we have
    \begin{align*}
        b^{(j)}_0(\pm 1) &= \frac{2\pi}{\ell(N_j)},\\
        b^{(j)}_1(\pm 1) &= \pm \frac{2\pi}{\ell(N_j)} \left(p_{j} + \frac{1}{2\pi i} \int_{N_j} A\right),\\
        b^{(j)}_{2}(\pm 1) &= \pm \frac{i}{4\pi} \int_{N_j} \iota_{\nu} dA + \frac{1}{4\pi} \int_{N_j} q,
    \end{align*}
    where $p_1, \dotsc, p_m \in \mathbb{Z}$ are arbitrary integers.
\end{thma}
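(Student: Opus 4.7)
The plan is to localize near each boundary component $N_j$, reduce $\Lambda_{g,A,q}$ to a block-diagonal pseudodifferential operator of order $1$ whose $j$-th block lives on $N_j$ and depends only on jets of $(g,A,q)$ along $N_j$, and then diagonalize each block on the circle to read off the two one-sided expansions.

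First, in boundary normal coordinates $(t,y)$ in a collar of $N_j$, where $t$ is the signed distance to $\partial M$ and $y$ is arc length along $N_j$, a scalar magnetic gauge transformation absorbing the normal component of $A$ reduces the magnetic Laplacian to the form $\Lapl_{g,A,q} = D_t^2 + R(t,y,D_y)$, where $R(t)$ is a $t$-dependent family of formally self-adjoint second-order tangential operators on $N_j$ whose full symbol is explicit in the jets of $(g,A,q)$ along $N_j$. Next I would carry out the classical microlocal factorization: construct a one-parameter family of classical self-adjoint elliptic $\Psi$DOs $B(t) = B(t,y,D_y)$ of order $1$ on $N_j$ with positive principal symbol $|\eta|$ such that
\[
 D_t^2 + R(t,y,D_y) \;=\; (D_t - iB(t))(D_t + iB(t)) \pmod{\text{smoothing}}.
\]
This forces a Riccati-type recursion on the full homogeneous expansion of $B(t)$ that is solved order by order in decreasing powers of $|\eta|$ by purely algebraic operations on symbols. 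A well-posedness argument (using the hypothesis that $0$ is not a Dirichlet eigenvalue of $\Lapl_{g,A,q}$) then shows $\Lambda_{g,A,q} = \bigoplus_{j=1}^m \Lambda_j$ modulo smoothing, where $\Lambda_j := -iB_j(0,y,D_y)$; the off-diagonal blocks between distinct $N_j$ are smoothing because interior ellipticity together with the disjointness of the components forces boundary data localized near one $N_j$ to solve the Dirichlet problem with trace vanishing to infinite order on every other $N_k$.

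It remains to diagonalize each $\Lambda_j$, a first-order classical self-adjoint elliptic $\Psi$DO on the circle with principal symbol $|\eta|$. A further gauge transformation on $N_j$ reduces $A|_{N_j}$ to its harmonic representative $\tfrac{1}{\ell(N_j)}\bigl(\int_{N_j}A\bigr)\,dy$, after which the Fourier basis $e_n(y) := \ell(N_j)^{-1/2}e^{2\pi i n y/\ell(N_j)}$ almost-diagonalizes $\Lambda_j$. A standard normal-form argument on $S^1$ shows that any first-order polyhomogeneous self-adjoint $\Psi$DO on the circle is unitarily equivalent modulo smoothing to a Fourier multiplier whose symbol is the $y$-average of the original symbol, and this produces the two asymptotic series with coefficients $b_k^{(j)}(\pm 1)$ equal to the $y$-averaged $(1-k)$-homogeneous part of the symbol of $\Lambda_j$ evaluated at $\eta=\pm 1$. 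The integer $p_j$ records the freedom in enumerating Fourier modes by $\mathbb{Z}$. The explicit values of $b_0^{(j)}$, $b_1^{(j)}$, $b_2^{(j)}$ then follow from the first three steps of the Riccati recursion: $b_0$ from the principal symbol $|\eta|$ together with the circle normalization $\ell(N_j)$, $b_1$ from the holonomy $\exp\bigl(\int_{N_j} A\bigr)$ picked up by the Fourier modes after the harmonic-gauge reduction, and $b_2$ from the subprincipal symbol of the Riccati solution, which separates cleanly into a magnetic contribution $\pm\tfrac{i}{4\pi}\int_{N_j}\iota_\nu dA$ and a potential contribution $\tfrac{1}{4\pi}\int_{N_j}q$.

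The main obstacle is the second step: turning the formal microlocal factorization into a genuine spectral reduction. One must verify that the off-diagonal contributions between distinct components are $\mc{O}(n^{-\infty})$-smoothing in operator norm, and then invoke perturbation theory to transfer the polyhomogeneous expansion of $-iB_j(0,y,D_y)$ to the eigenvalues of $\Lambda_{g,A,q}$ itself. The first point relies on interior ellipticity and careful trace estimates for solutions of the Dirichlet problem. The second exploits the fact that smoothing perturbations of a first-order elliptic $\Psi$DO with consecutive unperturbed eigenvalues separated by a uniform gap shift eigenvalues by $\mc{O}(n^{-\infty})$, which applies to each $\Lambda_j$ since its principal eigenvalues are $\tfrac{2\pi}{\ell(N_j)}|n+h_j|$ with harmonic shift $h_j \in \mathbb{R}$. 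Once these two technical points are in place, the explicit coefficients are a bookkeeping exercise in the Riccati recursion.
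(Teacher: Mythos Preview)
Your proposal is correct and follows essentially the same route as the paper: the factorization of $\Lapl_{g,A,q}$ and the resulting full symbol of $\Lambda_{g,A,q}$ are taken from \cite{Cekic-20} (the paper's Proposition~\ref{prop:computation}), the off-diagonal blocks are smoothing by pseudolocality (the paper's \eqref{eq:lambdaii}), the circle normal form is the paper's Theorem~\ref{thm:normal-form}, and the passage from the normal form to eigenvalue asymptotics via a smoothing perturbation is the paper's Theorem~\ref{thm:sharpasymptotics}.

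One small imprecision worth flagging: your statement that the normal form on $S^1$ produces a Fourier multiplier whose symbol is simply the $y$-average of the original symbol is not correct in general. In the paper's Theorem~\ref{thm:normal-form} the coefficient $b_1$ carries an integer shift (your $p_j$) coming from the winding number of the conjugating symbol, and for operators of order $m\neq 1$ the formula for $b_2$ contains nontrivial correction terms beyond the average of $a_2$. In the present application these subtleties are harmless: after arc-length parametrization the principal symbol is already $x$-independent (so no diffeomorphism is needed and $b_0=a_0$), the integer $p_j$ is exactly the enumeration freedom you identify, and since the order is $m=1$ the $(m-1)$ factors kill the extra terms in $b_2$, leaving $b_2$ as the genuine average of $a_2$. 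So your bookkeeping for $b_0^{(j)},b_1^{(j)},b_2^{(j)}$ is right, but the blanket ``average'' claim would fail for, say, the square of the DN map.
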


For $j = 1, \dotsc, m$, we remark that the freedom in choice of $p_j$ amounts to a translation in the spectrum, that is, if we denote by $\mu_n^{(j)}$ an enumeration of $\mc{S}_j$ with respect to the choice $p_j + q_j$, where $q_j \in \mathbb{Z}$, then it is easy to see that $\mu_n^{(j)} = \lambda_{n + q_j}^{(j)}$ for large enough $|n|$. The enumeration by $\mathbb{Z}$ is natural, as can be seen for instance by looking at the Dirichlet-to-Neumann operator of the Laplacian on the unit disk: its eigenvalues are given by $|n|$ for $n \in \mathbb{Z}$. Heuristically, this ``two-fold" property of the spectrum comes from the fact that $T^*\mathbb{S}^1$ without the zero section is disconnected. As we will see in the proof, the coefficients $b^{(j)}_k$ come up as the coefficients in the symbol expansion of a ``normal form" pseudodifferential operator to which $\Lambda_{g, A, q}$ is conjugated to. 

In the case $A = 0$, $q = 0$, the proof of the just stated theorem also shows that $b^{(j)}_k = 0$ for $k \geq 1$; this was previously shown in \cite{Edward-93, Rozenbljum-79}. We also mention that the first five heat trace invariants of $\Lambda_{g, A, q}$ were computed in \cite{Liu-Tan-23}; curiously, they do not depend on $A$. When $A = 0$, and $q$ is a non-zero \emph{constant} function, the coefficient $b_3^{(j)}$ was computed in \cite{Lagace-St-Amant-21}.

Finally, note that the presence of the factors like $\int_{N_j} A$, $\int_{N_j} \iota_\nu dA$, and $\quad \int_{N_j} q$ in the eigenvalue expansions is an instance of the broad physical phenomenon of \lq separation of spectral lines\rq\, in the presence of electric and magnetic fields, going back to \emph{Zeeman} and \emph{Stark} effects \cite{Zeeman-1896, Star-1914}; see also e.g. \cite{Fanelli-Su-Wang-Zhang-Zheng-24} and references therein. It is clear from our result that $\lambda_{n}^{(j)}$ and $\lambda_{-n}^{(j)}$ in general have different expansions as $n \to \infty$, which is a new feature compared to the case $A = 0$.

\subsection{Spectral Inverse Problem} The second question we address is to what extent does the spectrum $\Spec(\Lambda_{g, A, q})$ determine $g$, $A$, and $q$, or in other words ``can one hear" these quantities from the spectrum \cite{Kac-66}? We first state our result in the simplest non-trivial setting of a single boundary component, while the more involved discussion of multiple boundary components is postponed for later.

\subsubsection{Single boundary component} As the following result shows, from the magnetic Steklov spectrum we can uniquely recover the length of the boundary component, the flux of $A$ around $\partial M$ (modulo integers and sign), the integral of $q$ along the boundary, as well as the absolute value of the boundary integral of $\iota_\nu dA$.

\begin{thmb}\label{thmb}
    Let $(M, g)$ be a compact Riemannian surface whose boundary $\partial M$ is connected. Let $A$ be a smooth purely imaginary $1$-form, and let $q$ be a smooth real-valued potential function. Then the spectrum of $\Lambda_{g, A, q}$ uniquely determines the following quantities:
    \[
        \ell(\partial M), \quad e^{\pm \int_{\partial M} A}, \quad \left| \int_{\partial M} \iota_\nu dA\right|, \quad \int_{\partial M} q.
    \]
\end{thmb}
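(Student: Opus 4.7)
The plan is to extract each of the four invariants directly from the asymptotic expansion provided by Theorem A, by exploiting the natural pairing of eigenvalues $\lambda_m$ and $\lambda_{-m}$ that cluster near $b_0 m := \frac{2\pi m}{\ell(\partial M)}$. Since $\partial M$ is connected, the spectrum is the single multiset $\mathcal{S}_1 = (\lambda_n)_{n \in \mathbb{Z}}$. Setting $\alpha := \frac{1}{2\pi i}\int_{\partial M} A \in \mathbb{R}$ (real because $A$ is purely imaginary) and choosing the integer $p_1$ so that $|p_1 + \alpha| \leq 1/2$, one finds $|b_1(1) - b_1(-1)| = 2b_0|p_1+\alpha| \leq b_0$, so that for large $m$ the pair $\{\lambda_m, \lambda_{-m}\}$ lies in an $O(1)$-cluster near $b_0 m$ that is separated from the analogous clusters near $b_0(m\pm 1)$. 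Consequently, in the non-decreasing enumeration $\sigma_1 \leq \sigma_2 \leq \cdots$ of the multiset we have $\{\sigma_{2m-1}, \sigma_{2m}\} = \{\lambda_m, \lambda_{-m}\}$ for all sufficiently large $m$.

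Given this pairing, I would form the intrinsic combinations
\[
\Sigma_m := \sigma_{2m-1} + \sigma_{2m}, \qquad \Delta_m := \sigma_{2m} - \sigma_{2m-1},
\]
and substitute the expansions of $\lambda_{\pm m}$ from Theorem A. Using the explicit identities $b_1(1)+b_1(-1) = 0$ and $b_2(1)+b_2(-1) = \frac{1}{2\pi}\int_{\partial M} q$, one obtains
\[
\Sigma_m = 2b_0 m + \frac{1}{2\pi m}\int_{\partial M} q + O(m^{-2}),
\]
so the leading and subleading coefficients recover $\ell(\partial M)$ and $\int_{\partial M} q$. Likewise, the difference has leading term $|b_1(1)-b_1(-1)| = 2b_0\,\mathrm{dist}(\alpha, \mathbb{Z})$; since this quantity is intrinsic to the multiset (the choice of $p_1$ does not affect it), one recovers $\mathrm{dist}(\alpha,\mathbb{Z})$, which determines $\alpha$ modulo $\mathbb{Z}$ up to sign, and via $e^{\int_{\partial M} A} = e^{2\pi i \alpha}$ this is exactly the unordered pair $\{e^{\int_{\partial M}A}, e^{-\int_{\partial M}A}\}$, i.e.\ $e^{\pm\int_{\partial M}A}$.

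For the final invariant, set $a := b_1(1)-b_1(-1)$ and $b := b_2(1)-b_2(-1) = \frac{i}{2\pi}\int_{\partial M}\iota_\nu dA$, and expand
\[
\Delta_m = \left| a + \frac{b}{m} + O(m^{-2})\right|.
\]
If $a \neq 0$ (i.e.\ $\alpha \notin \mathbb{Z}$), the sign of $a + b/m$ stabilizes for large $m$ and $m(\Delta_m - |a|) \to \mathrm{sign}(a)\cdot b$, whose absolute value equals $|b| = \frac{1}{2\pi}|\int_{\partial M}\iota_\nu dA|$; if $a = 0$ (i.e.\ $\alpha \in \mathbb{Z}$), then $b_1(1)=b_1(-1)=0$ and $m\Delta_m \to |b|$ directly. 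In either case $|\int_{\partial M}\iota_\nu dA|$ is recovered.

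The main obstacle is the pairing step, and in particular the edge case $|p_1 + \alpha| = 1/2$ in which consecutive clusters can merge at leading order, so that the identification of $\{\sigma_{2m-1},\sigma_{2m}\}$ with $\{\lambda_m,\lambda_{-m}\}$ requires tracking $O(1/m)$ corrections; this is a finite-dimensional perturbation argument. Apart from this subtlety, the remainder is bookkeeping: the multiset is invariant under swapping the $\pm$-branches (which flips the signs of the antisymmetric combinations $b_k(1)-b_k(-1)$) and under integer shifts of $p_1$, and it is precisely these symmetries that force the recovered magnetic data to appear as absolute values and the unordered pair $e^{\pm\int_{\partial M} A}$ rather than as individually signed quantities.
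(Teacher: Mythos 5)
Your overall strategy coincides with the paper's: sort the spectrum into the non-decreasing sequence $(\sigma_n)$, observe that consecutive pairs correspond (for large $n$) to $\{\lambda_m,\lambda_{-m}\}$, and read off the invariants from the asymptotic formulas for $\lambda_{\pm m}$ given by Theorem~A. The symmetric/antisymmetric combinations $\Sigma_m$ and $\Delta_m$ are a clean way to organize what the paper does by explicit case analysis on $\alpha$, and they correctly isolate $b_1(1)+b_1(-1)=0$ and $b_2(1)+b_2(-1)=\tfrac{1}{2\pi}\int_{\partial M}q$ in the sum, and the signed quantities in the difference.

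There is, however, a genuine gap in the edge case $\alpha\equiv\tfrac12\pmod 1$ that your closing remark mislabels as a perturbative nuisance. When $|p_1+\alpha|=\tfrac12$, both $\lambda_m\approx b_0(m+\tfrac12)$ and $\lambda_{-(m+1)}\approx b_0(m+\tfrac12)$ land at the same cluster, so the sorted pairing becomes (up to a fixed index shift) $\{\sigma_{2m-1},\sigma_{2m}\}=\{\lambda_{m-1},\lambda_{-m}\}$, \emph{not} $\{\lambda_m,\lambda_{-m}\}$. With this shifted pairing one computes $\Delta_m=|\lambda_{m-1}-\lambda_{-m}|\to 0$, while your formula predicts $\lim_m\Delta_m=2b_0\,\mathrm{dist}(\alpha,\mathbb Z)=b_0\neq 0$. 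So the stated recipe would misidentify $\alpha=\tfrac12$ as $\alpha=0$, and the $O(1/m)$ corrections do not repair this, since the ambiguity is between two pairings that both give $\Delta_m\to 0$ or $\Delta_m\to b_0$ depending on parity, and gap information alone cannot distinguish $\alpha=0$ from $\alpha=\tfrac12$. The correct remedy — which is what the paper's formulas for $\sigma_{2n}$ and $\sigma_{2n+1}$ accomplish implicitly, and which you would need to add — is to use the \emph{absolute} cluster position modulo $b_0$, i.e.\ the limit of $\sigma_{2n}-b_0 n$, rather than the gap $\Delta_m$. Once $\alpha$ is pinned down this way, your extraction of $\int_{\partial M}q$ from $\Sigma_m$ and of $\bigl|\int_{\partial M}\iota_\nu dA\bigr|$ from $m\Delta_m$ (which in the shifted pairing becomes $m|\lambda_{m-1}-\lambda_{-m}|\to\tfrac{1}{2\pi}\bigl|\int_{\partial M}\iota_\nu dA\bigr|$) still goes through.
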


The fact that we may only detect $e^{\pm \int_{\partial M} A}$ and not $e^{\int_{\partial M} A}$ (and similarly for $\left|\int_{\partial M} \iota_\nu dA\right|$) is sharp. Namely, in Proposition \ref{prop:symmetry} below we will see that $\Spec(\Lambda_{g, A, q}) = \Spec(\Lambda_{g, -A, q})$ (in fact, $\Lambda_{g, A, q}$ and $\Lambda_{g, -A, q}$ are unitarily equivalent by the conjugation operator). Also, from the decomposition \eqref{eq:lambdaii} below as well as the subsequent paragraph, it follows that if we change $A$ by $-A$ just in a neighbourhood of $\partial M$, then the corresponding Dirichlet-to-Neumann maps agree up to smoothing operators, and the two spectra agree up to rapidly decaying terms of order $\mc{O}(n^{-\infty})$.

\subsubsection{Close almost bijections and spectra}\label{ssec:close-almost-bijection-intro} We now introduce some notation in order to state Theorem \ref{thmc} below. Heuristically, this notion makes precise of what it means that the spectrum (or more generally an arbitrary sequence) is close to a union of arithmetic progressions. 

Given a multiset $R = \{(a_1, b_1), \dotsc, (a_k, b_k)\}$ consisting of elements of $\mathbb{R}_{> 0} \times \mathbb{R}$, we say it \emph{generates} the multiset $\mc{S}(R)$ where
\begin{equation}\label{eq:S(R)}
    \mc{S}(R) := (a_1 \mathbb{N} + b_1) \cup \dotsb \cup (a_k \mathbb{N} + b_k),
\end{equation}
where we use the convention that $\mathbb{N} = \mathbb{Z}_{\geq 1}$. We also introduce some notation from set theory. Let $X$, $Y$, and $Z$ be unbounded multisets of real numbers, bounded from below, with no accummulation points. We say that a map $F: X \to Y$ is
\begin{itemize}
    \item \emph{close}, if $|F(x) - x| \to 0$ as $x \to \infty$;
    \item an \emph{almost bijection} if $F^{-1}(y)$ consists precisely of a single point for $y$ large enough. 
\end{itemize}
The second condition means that $F$ is a bijection up to deleting finitely many elements of $X$ and $Y$. It is a straightforward exercise to see that if $F: X \to Y$, $G: Y \to Z$ are close almost bijections, then $G \circ F: X \to Z$ is a close almost bijection, and that there is a close almost bijection $\widetilde{F}: Y \to X$, such that $\widetilde{F} \circ F(x) = x$ for large enough $x$ (uniquely defined up to finitely many points by the inverse $F^{-1}$). In what follows and in particular in Theorem \ref{thmc} below, we will sometimes consider multisets \emph{up to the equivalence relation} of being in a close almost bijection. 

The asymptotic expansion of the spectrum stated in Theorem \ref{thma} can be partially re-stated using the terminology of close almost bijections. In other words, the notion of close almost bijection relates to the non-negative powers of $|n|$ in the spectral asymptotics, while the finer invariants coming from the negative powers are not detected. Namely, if we consider the expansion up to $k_0 = 1$, we see that $\mc{S}(R)$ and $\Spec(\Lambda_{g, A, q})$ are in close almost bijection where (the notation comes from Theorem \ref{thma})
\[
    R := \bigcup_{j = 1}^{m} \left\{\left(\frac{2\pi}{\ell(N_j)}, \frac{2\pi}{\ell(N_j)} \left(p_j + \frac{1}{2\pi i} \int_{N_j} A\right)\right), \left(\frac{2\pi}{\ell(N_j)}, -\frac{2\pi}{\ell(N_j)}\left(p_j + \frac{1}{2\pi i} \int_{N_j} A\right)\right)\right\}.
\]
In fact, in Theorem \ref{thmc} below we give further relations between $R$ and $\Spec(\Lambda_{g, A, q})$, i.e. we describe to what extent does the equivalence class of $\Spec(\Lambda_{g, A, q})$ with respect to close almost bijection (and in particular $\Spec(\Lambda_{g, A, q})$ itself) determine $R$.

\subsubsection{Examples}\label{sssec:examples-intro} Before jumping into the general case, we illustrate the relation between close almost bijections and spectra through an explicit example; this should serve as motivation for Theorem \ref{thmc} below. Namely, let $M = \mathbb{S}^1 \times [-1, 1]$ equipped with the product Riemannian metric $g_{\mathrm{ann}}$, where we identify $\mathbb{S}^1$ with the quotient $\mathbb{R}/(2\pi \mathbb{Z})$; denote the variable on the $\mathbb{S}^1$ factor by $x$. Let $A = c dx$ where $c$ is a purely imaginary constant. When $c \not \in i \mathbb{Z}$, the spectrum is given by (see Example \ref{ex:planar-annulus} below for more details)
\[
    \Spec(\Lambda_{g_{\mathrm{ann}}, cdx, 0}) = \bigcup_{k \in \mathbb{Z}} \left\{ |k - ic| \tanh(|k - ic|), |k - ic|\coth(|k - ic|)\right\},
\]
and so using the expansions $\tanh x = 1 + \mc{O}(e^{-2x})$ and $\coth x = 1 + \mc{O}(e^{-2x})$ as $x \to \infty$, we get that $\Spec(\Lambda_{g_{\mathrm{ann}}, cdx, 0})$ is in close almost bijection with $\mc{S}(R)$, where
\[
    R = \{(1, ic), (1, -ic), (1, ic), (1, -ic)\}.
\]

Another setting where we can compute the magnetic Steklov spectrum explicitly is for the Euclidean unit disk $(\mathbb{D}^2, g_{\mathrm{Eucl}})$ equipped with an \emph{Aharonov-Bohm potential}, given with respect to coordinates $(x_1, x_2) \in \mathbb{R}^2$ as
\[
    A(x_1, x_2) = \frac{c}{x_1^2 + x_2^2}(-x_2 dx_1 + x_1 dx_2),\quad c \in i\mathbb{R}.
\]
Note that this potential is no longer smooth as it has a singularity at the origin so it does not fit into the framework of this paper. By \cite[Theorem 33]{Colbois-Provenzano-Savo-22} (see also Remark \ref{rem:aharonov-bohm} below) if $ic \in (0, \frac{1}{2}]$, we have
    \begin{equation}\label{eq:ah-bohm-spectrum}
        \Spec(\Lambda_{g_{\mathrm{Eucl}}, A, 0}) = \{|k - ic| \mid k \in \mathbb{Z}\},
    \end{equation}
    and hence $\Spec(\Lambda_{g_{\mathrm{Eucl}}, A, 0})$ is in close almost bijection with $\mc{S}(R)$ for
    \[
        R = \{(1, ic), (1, -ic)\}.
    \]

\subsubsection{General case: multiple boundary components} Finally, we state our main result in the general case addressing the spectral inverse problem. As explained in \S \ref{ssec:close-almost-bijection-intro} and \S \ref{sssec:examples-intro}, it is convenient to use the notion of close almost bijection.

\begin{thmc} \label{thmc}
 Let $(M, g)$ be a compact Riemannian surface, equipped with a smooth purely imaginary $1$-form $A$, and a smooth real-valued potential function $q$. Write $N_{1}, \dotsc, N_m$ for the connected components of $\partial M$. Write $p_j \in \mathbb{Z}$ for the unique integer such that 
 \[
    p_j + \frac{1}{2\pi i}\int_{N_j} A =: \alpha_j \in [0, 1), \quad j = 1, \dotsc, m,
 \]
and write
    \[
        R^\pm := R^\pm(M, g, A, q) := \bigcup_{j = 1}^{m} \left\{\left(\frac{2\pi}{\ell(N_j)}, \pm\frac{2\pi}{\ell(N_j)} \alpha_j\right)\right\}.
    \]
    
\begin{enumerate}[itemsep=5pt, label*=\arabic*.]
    \item The equivalence class of $\Spec(\Lambda_{g, A, q})$ with respect to the relation of close almost bijection uniquely determines $\mc{S}(R^+ \cup R^-)$ up to finitely many elements. More precisely, let $g'$ be another Riemannian metric on $M$, $A'$ a smooth purely imaginary $1$-form, and $q'$ is a smooth real-valued potential function, and write
    \[
        (R')^\pm := R^\pm(M, g', A', q').
    \]
    Then, there exists a close almost bijection 
    \[
        F: \Spec(\Lambda_{g, A, q}) \to \Spec(\Lambda_{g', A', q'}),
    \]
    if and only if $\mc{S}(R^+ \cup R^-)$ is equal to $\mc{S}((R')^+ \cup (R')^-)$ up to removing finitely many elements from both multisets.
 
    \item Assume that $\left(\ell(N_j)\right)_{j = 1}^m$ are all distinct, and $\alpha_{j} \not \in \{\frac{1}{4}, \frac{3}{4}\}$ for $j = 1, \dotsc, m$. Then the equivalence class of $\Spec(\Lambda_{g, A, q})$ with respect to the relation of close almost bijection uniquely determines $m$, $\left(\ell(N_j)\right)_{j = 1}^m$, and $\left(e^{\pm \int_{N_j} A}\right)_{j = 1}^m$.
\end{enumerate}
\end{thmc}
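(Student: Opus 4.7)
The plan is to use Theorem A to reduce both parts of Theorem C to a combinatorial question about multisets of the form $\mc{S}(R)$. Applying Theorem A with truncation at $k_0 = 1$ and matching coefficients with the definition of $R^\pm$ in the statement, one immediately obtains close almost bijections
\[
    \Spec(\Lambda_{g, A, q}) \ \longleftrightarrow\ \mc{S}(R^+ \cup R^-), \qquad \Spec(\Lambda_{g', A', q'}) \ \longleftrightarrow\ \mc{S}((R')^+ \cup (R')^-).
\]
After this reduction, the question becomes which features of $R^+ \cup R^-$ survive passage to $\mc{S}(R^+ \cup R^-)$ up to close almost bijection.

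For Part 1, the ``if'' direction is essentially free: if the two generated multisets agree up to finitely many elements, the identity map on the common tail is trivially a close almost bijection, and composing with those supplied by Theorem A gives $F$. For the ``only if'' direction I use transitivity of close almost bijection to reduce to the following lemma: two multisets of the form $\mc{S}(R_1)$ and $\mc{S}(R_2)$ in close almost bijection must agree up to finitely many elements. I would prove this by combining the condition $|F(x) - x| \to 0$ with the fact that such multisets are, asymptotically, locally a finite union of arithmetic progressions with gaps bounded below; for large $x$, $F(x)$ is forced to be the unique element of $\mc{S}(R_2)$ within $o(1)$ of $x$, and a local comparison of multiplicities and shifts in small windows then forces $F(x) = x$ in the tail.

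For Part 2, I use Part 1 to reduce to understanding what the multiset $\mc{S}(R^+ \cup R^-)$ itself (modulo finite sets) determines. The strategy is to show that under the stated hypotheses this multiset admits a unique decomposition into pairs of arithmetic progressions of the form $\{a_j \mathbb{N} + a_j \alpha_j,\ a_j \mathbb{N} - a_j \alpha_j\}$, one per boundary component. First I extract the set of ``basic slopes'' of $\mc{S}(R^+ \cup R^-)$ together with their multiplicities by an asymptotic gap analysis; since lengths are distinct, the slopes $a_j = 2\pi/\ell(N_j)$ arising from different boundary components are distinct and do not mix. The critical observation is that the pair of progressions $a_j \mathbb{N} \pm a_j \alpha_j$ merges into a single progression of slope $a_j/2$ precisely when the two shifts differ by $a_j/2 \bmod a_j$, that is, $2\alpha_j \equiv 1/2 \bmod 1$, which amounts to $\alpha_j \in \{1/4, 3/4\}$. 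Under the assumption this merger is excluded, so each $a_j$ appears as the common difference of exactly two progressions, whose shifts $\pm \alpha_j \bmod 1$ are read off to recover $\{e^{\pm \int_{N_j} A}\}$.

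The main obstacle is the rigidity statement used in the last paragraph: one must carefully rule out ``fake'' alternative decompositions of $\mc{S}(R^+ \cup R^-)$ in which the paired-progression structure is reshuffled across boundary components while still reproducing the same multiset. This is the point where the assumptions on distinct lengths and $\alpha_j \notin \{1/4, 3/4\}$ enter essentially: distinctness of lengths keeps slopes from different boundaries separated, while the exclusion of the half-collapse values rules out the single genuine ambiguity, in which a merged pair of slope $a/2$ could be reinterpreted as originating from a boundary component of length $2\ell(N_j)$. The analysis reduces to a careful bookkeeping of multiplicities modulo each visible slope, which I expect to be the most technically involved step.
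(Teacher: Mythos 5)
Your reduction to a combinatorial question about $\mc{S}(R^+\cup R^-)$ via Theorem~A is correct, and your treatment of the ``if'' direction in Part~1 is fine, but there are two genuine gaps.

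First, for the ``only if'' direction of Part~1, your key lemma is wrong as stated. You claim that $\mc{S}(R_1)$ and $\mc{S}(R_2)$ have gaps bounded below, so that $F(x)$ is forced to be the unique element of $\mc{S}(R_2)$ within $o(1)$ of $x$, and then that ``local comparison of multiplicities'' forces $F(x)=x$ in the tail. Neither claim holds once irrational slope ratios are allowed. For instance $\mc{S}(\{(1,0),(\sqrt2,0)\}) = \mathbb{N}\cup\sqrt2\mathbb{N}$ has arbitrarily small gaps (Dirichlet), so the ``unique close element'' does not exist, and the paper's Remark after Lemma~\ref{lemma:N-cai} constructs a close almost injection $F:\mathbb{N}\to\mathbb{N}\cup\sqrt2\mathbb{N}$ with $F(n_i)=m_i\sqrt2$ along a subsequence with $|m_i\sqrt2 - n_i|<i^{-1}$; so $F(x)=x$ eventually is simply false. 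The correct conclusion of Lemma~\ref{lemma:acb-implies-equal} is that the \emph{multisets} agree up to finitely many elements, not that the map is eventually the identity, and the proof genuinely requires Lemma~\ref{lemma:N-cai}, which uses unique ergodicity of irrational circle rotations to rule out progressions with irrational moduli contributing to a close almost injection from $\mathbb{N}$. Your sketch has no ingredient that plays this role.

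Second, for Part~2, you correctly identify that $\alpha_j=\tfrac14$ or $\tfrac34$ is exactly the case where the pair $a_j\mathbb{N}\pm a_j\alpha_j$ merges into a single progression of step $a_j/2$, and you correctly flag the rigidity statement (uniqueness of the paired decomposition of $\mc{S}(R^+\cup R^-)$) as the hard step — but you leave it as ``careful bookkeeping of multiplicities,'' which is not a proof. The paper does this via the Newman--Mirsky generating-function method (Lemma~\ref{lemma:ap-uniqueness}, Item~3): after reducing to the rational case via Lemma~\ref{lemma:rational-property}, one equates $\sum_j \frac{z^{b_j}+z^{-b_j}}{1-z^{a_j}}$ for the two sides on $|z|<1$ modulo a Laurent polynomial, lets $z$ approach roots of unity of the largest modulus, and uses the hypothesis $a\pm4b\notin 4a\mathbb{Z}$ to exclude the cancellation scenarios $4b/a\in\{1,3\}$; this forces the largest moduli and their shifts to match, and one induces down. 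Without this (or an equivalent) mechanism your argument does not rule out the ``fake decompositions'' you yourself raise as the obstacle.
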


We note that the assumptions in Item 2 are sharp. In fact, if we consider for $i = 1, 2$, $(M_i, g_i)$, $A_i$, and $q_i$, as above, such that
\[
    R_1^\pm := R^\pm(M_1, g_1, A_1, q_1) := \{(1, 0)\},\quad R_2^\pm := R^\pm(M_2, g_2, A_2, q_2) := \{(3, 0), (3, \pm 1), (3, \pm 2)\},
\]
then $\mc{S}(R_1^+ \cup R_1^-) = \mc{S}(R_2^+ \cup R_2^-)$, and according to Item 1, $\Spec(\Lambda_{g_1, A_1, q_1})$ and $\Spec(\Lambda_{g_2, A_2, q_2})$ are in a close almost bijection. That the condition $\alpha_j \not \in \{\frac{1}{4}, \frac{3}{4}\}$ is sharp can be seen from a similar example given in \eqref{eq:sharp-1/4} below. 

In the case of a small number of boundary components, the preceding examples are completely classified in Proposition \ref{prop:rigid} below. More precisely, we give a complete classification of $\Spec(\Lambda_{g, A, q})$ up to the equivalence relation of close almost bijection, restricted to surfaces with a single boundary component, and comparing with surfaces that have one, two, or three boundary components.

As explained after Theorem \ref{thma}, the fact that we may only detect $e^{\pm \int_{N_j} A}$ and not $e^{\int_{N_j} A}$ (and similarly for $\left|\int_{N_j} \iota_\nu dA\right|$) is sharp as $\Spec(\Lambda_{g, A, q}) = \Spec(\Lambda_{g, -A, q})$ (see Proposition \ref{prop:symmetry} below). Again, from the decomposition \eqref{eq:lambdaii} as well as the subsequent paragraph, it follows that if we change $A$ by $-A$ just in a neighbourhood of a \emph{single} boundary component, then the corresponding Dirichlet-to-Neumann maps agree up to smoothing terms, and the two spectra agree up to terms of order $\mc{O}(n^{-\infty})$.

We emphasise that the statement in Items 1 and 2 of Theorem \ref{thmb}, ``the equivalence class of $\Spec(\Lambda_{g, A, q})$ with respect to the relation of close almost bijection, uniquely determines a quantity", is strictly stronger and so implies the statement ``$\Spec(\Lambda_{g, A, q})$ uniquely determines a quantity". For instance, it easily follows from Theorem \ref{thma} that changing $q$ does not affect the spectrum up to this relation. Thus, we need less information to detect $\mc{S}(R^+ \cup R^-)$ (up to finitely many elements); we will see in the proofs below that the reason for this are (elementary) number-theoretic properties of arithmetic progressions.

Also, in Proposition \ref{prop:example} we show that as soon as $A$ is constant near the boundary (in a suitable sense in boundary normal coordinates), then the spectrum of $\Lambda_{g, A, 0}$ agrees with $\mc{S}(R^+ \cup R^-)$ up to rapidly decaying terms of order $\mc{O}(n^{-\infty})$; in particular, this gives plenty of examples where the polynomial part of the spectrum cannot determine, for instance, the number of boundary components.

In the case $A = 0$ and $q = 0$, \cite{Girouard-Parnovski-Polterovich-Sher-14} show that $\Spec(\Lambda_{g, 0, 0})$ uniquely determines $m$ and $(\ell(N_j))_{j = 1}^m$. In the setting $A = 0$ and $q$ is a non-zero real-valued potential function \cite{Lagace-St-Amant-21} uniquely determine $m$, $(\ell(N_j))_{j = 1}^m$, as well as the coefficients $(b_k^{(j)})_{k \geq 0}$. In another direction, \cite{Edward-93} and \cite{Polterovich-Sher-15} show that $\Spec(\Lambda_{g, 0, 0})$ uniquely determines the unit ball among subsets of $\mathbb{R}^2$ and $\mathbb{R}^3$, respectively. See also \cite{Colbois-Provenzano-Savo-25} for a result in a related setting, where the volume and the conformal class of a surface is uniquely determined from the so called \emph{ground state spectrum} of the magnetic Laplacian.

\subsection{Further results} Along the lines of Theorem \ref{thmc}, it is possible to give more applications of the asymptotic spectral expansion; we briefly explain these. 

For the special case of zero magnetic field, the classification in the case of a small number of boundary components is given in Proposition \ref{prop:rigid-flat}. If we fix $(M, g)$ and only vary the magnetic and electric potentials $A$ and $q$, then $\Spec(\Lambda_{g, A, q})$ (up to the equivalence relation of close almost bijection) uniquely determines $\left(e^{\pm \int_{N_j} A}\right)_{j = 1}^{m}$, as follows from Lemma \ref{lemma:ap-uniqueness}, Item 4. In Proposition \ref{prop:ECS} we give conditions under which $\Spec(\Lambda_{g, A, q})$ uniquely determines $\mc{S}(R^+)$.

\subsection{Proof ideas} The proof of Theorem \ref{thma} is based on two ingredients: the computation of the full symbol of the magnetic Dirichlet-to-Neumann map \cite{Cekic-20}, and Theorem \ref{thm:sharpasymptotics} below, which computes spectral asymptotics up to rapidly decayling terms for any formally self-adjoint, elliptic, pseudodifferential operator $A$ (of non-zero order) on the circle $\mathbb{S}^1$, as a function of the full symbol of $A$. The latter result was originally shown in \cite{Rozenbljum-78} (see also \cite{Agranovich-84}), but we give a simplified proof that entirely relies on the symbolic calculus. We were partly motivated by the fact that the computation of coefficients $b_k^{(j)}$ in Theorem \ref{thma} was non-transparent using \cite{Rozenbljum-78}, and we also need to go further in the expansion. A major component of the proof is Theorem \ref{thm:normal-form}, which inductively constructs a unitary operator $K$ such that $K^{-1}AK$ has full symbol \emph{independent of the $x$-variable} in $T^*\mathbb{S}^1$.

Theorem \ref{thmb} follows from in a straightforward way from Theorem \ref{thma} by putting together the `two parts of the spectrum' ($\lambda_n^{(1)}$ for $n < 0$ and $n > 0$) into the non-decreasing sequence $(\sigma_n)_{n \geq 1}$ by studying cases. For Theorem \ref{thmc}, Item 1, one shows that $\mc{S}(R)$ and $\mc{S}(R')$ for some generating multisets $R$ and $R'$ are in close almost bijection, if and only if $\mc{S}(R)$ and $\mc{S}(R')$ agree up to finitely many elements. This is done in Lemmas \ref{lemma:N-cai} and \ref{lemma:acb-implies-equal} using an inductive procedure by comparing the arithmetic progressions in $R$ and $R'$, and by using the dynamics of circle rotations (an alternative proof of this step is suggested in \cite[Remark 2.10]{Girouard-Parnovski-Polterovich-Sher-14}). In turn, to show that $\mc{S}(R)$ uniquely determines $R$ in favourable situations is proved in Lemma \ref{lemma:ap-uniqueness}, using the `generating function' approach attributed to Newman-Mirsky \cite{Soifer-24}. Item 2 of Theorem \ref{thmc} then follows as an immediate consequence of these results. Full classification for cases with small number of boundary components is done in Proposition \ref{prop:ap-low-dim-uniqueness}, by using the mentioned generating function approach and by studying cases by hand.

\subsection{Perspectives} Using similar arguments, it should be possible to obtain spectral asymptotics for the Dirichlet-to-Neumann map of the \emph{connection Laplacian} $d_A^* d_A + Q$, where $A$ is a skew-Hermitian matrix of $1$-forms (unitary connection) on the Hermitian vector bundle $E = M \times \mathbb{C}^r$ over $M$, and $Q$ is a Hermitian matrix function. (Recall here that $d_A = d + A$ is the covariant derivative.)

Another interesting question is to uniquely determine \emph{all} of the coefficients $(b_k^{(j)})_{k \geq 0}$ in Theorem \ref{thma} from the spectrum $\Spec(\Lambda_{g, A, q})$, under suitable assumptions as in Theorem \ref{thmc}, Item 2. This could potentially be done by employing the strategy as in \cite[Proposition 6.3]{Lagace-St-Amant-21}. Also, within a suitable class, can we determine $(M, g)$ and $A$ such that $\Spec(\Lambda_{g, A, 0})$ is \emph{equal} up to finitely many terms to $\mc{S}(R)$ for some $R$ (see \cite{Edward-93} for a result in this direction)?

Finally, from the point of view of inverse problems an exciting question is to uniquely determine some information about the interior of $(M, g)$. The only result we are aware of in this direction is due to \cite{Florentin-25} in the analytic setting assuming that the boundary has an Anosov geodesic flow.

\subsection{Organization of the paper.} In \S \ref{ssec:symmetries}, we discuss some symmetries of the Dirichlet-to-Neumann map, while in \S \ref{ssec:examples} we compute the spectrum explicitly in the example of cylindrical manifolds. Section \ref{sec:structure-pdo} is devoted to the study of pseudodifferential operators on the circle and their spectral asymptotics. More precisely, \S \ref{ssec:quantization-circle} discusses quantizations on the circle, \S \ref{ssec:normal-form} shows that such operators can be conjugated to a normal form, \S \ref{ssec:spectral-asymptotics-general} shows spectral asymptotics for general operators. Symbol expansion for the Dirichlet-to-Neumann maps is recalled in \S \ref{ssec:symbol-expansion}, while Theorem \ref{thma} is proved in \S \ref{ssec:spectral-asymptotics-DN-map}. In \S \ref{ssec:S(R)-cab} we study unions of arithmetic progressions up to the relation of close almost bijection, while \S \ref{ssec:uniqueness-generating-multisets} studies unique determination of $R$ from $\mc{S}(R)$, and \S \ref{ssec:small} does the same for small examples. In \S \ref{ssec:uniqueness-single} we prove Theorem \ref{thmb}, while in \S \ref{ssec:uniqueness-general} we establish Theorem \ref{thmc}. Finally, in \S \ref{ssec:small-boundary} we discuss the classification for small number of boundary components, and in \S \ref{ssec:example-up-to-smoothing} we give an example in which we may compute all of the coefficients $b_k^{(j)}$.

\subsection{Acknowledgments.} The authors are grateful to Gabriel P. Paternain and Hanming Zhou for suggesting the problem and for helpful discussions. We also warmly thank the anonymous referee for comments that greatly improved the presentation of the article. During the course of writing this project M.C. received funding from an Ambizione grant (project number 201806) from the Swiss National Science Foundation and was supported by the Max Planck Institute in Bonn.

\section{Preliminaries}\label{sec:preliminaries}

In \S \ref{ssec:symmetries} we discuss symmetries of the magnetic Dirichlet-to-Neumann map (in short, the magnetic \emph{DN map}), while in \S \ref{ssec:examples} we provide examples for which we compute eigenvalues and eigenfunctions of $\Lambda_{g, A, q}$ explicitly.

\subsection{Symmetries of the magnetic DN map} 
\label{ssec:symmetries}

Let $M$ be a compact manifold with boundary, $E \to M$ a Hermitian vector bundle equipped with a unitary connection $A$, and $q$ a Hermitian endomorphism of $E$. That $A$ is unitary means that it is compatible with the Hermitian structure, that is
\[
    X \langle{s_1, s_2}\rangle_E = \langle{\iota_X d_A s_1, s_2}\rangle_E + \langle{s_1, \iota_X d_A s_2}\rangle_E, \quad s_1, s_2 \in C^\infty(M, E), \quad X \in C^\infty(M, TM),
\]
where $d_A$ is the covariant derivative of $A$, $\langle{\bullet, \bullet}\rangle$ denotes the Hermitian inner product on $E$, $\iota_X$ is contraction with $X$, and $C^\infty(M, E)$ is the space of smooth sections of $E$. For a unitary isomorphism $F: E \to E$, write $F^*A$ for the unitary connection defined by the covariant derivative $d_{F^*A} := F^{-1} d_A F$. Write $F^*q := F^{-1}qF$.

For further reference, we record an expression for the magnetic Laplacian $\Lapl_{g, A, q}$ in a local coordinate system $(x_i)_{i = 1}^n$. Write $g = \sum_{i, j = 1}^n g_{ij} dx_i \otimes dx_j$, $g^{ij} := (g^{-1})_{ij}$, and using a local trivialisation of $E$, where $A = \sum_{i = 1}^n A_i dx_i$, we have
\begin{align}\label{eq:lapl-formula}
    \mc{L}_{g, A, q} = -\frac{1}{\sqrt{\det g}} \sum_{i, j = 1}^n (\partial_{x_i} + A_i) g^{ij} \sqrt{\det g} (\partial_{x_j} + A_j) + q.
\end{align}

\begin{prop}\label{prop:symmetry}
For $F: E \to E$ a unitary isomorphism, we have
\begin{align*}
    \Lambda_{g, F^*A, F^*q} = F^{-1} \Lambda_{g, A, q} F.
\end{align*}
Assume now $\dim M = 2$ and $E = M \times \mathbb{C}$. If $Cf := \overline{f}$ denotes complex conjugation, and $c \in C^\infty(M, \mathbb{R}_{> 0})$ is a positive function, then
\[
    C^{-1}\Lambda_{g, A, q} C = \Lambda_{g, -A, q}, \quad \Lambda_{cg, A, q} = c^{-\frac{1}{2}} \Lambda_{g, A, cq}.
\]
\end{prop}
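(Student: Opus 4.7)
The proposition collects three transformation laws for the magnetic DN map, and my plan is to treat each by first establishing the corresponding identity for the magnetic Laplacian and then tracking how the Dirichlet problem and the normal derivative transform.

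For the first identity, I would reduce it to the operator identity $\Lapl_{g,F^*A,F^*q} = F^{-1}\Lapl_{g,A,q}F$. Since $F$ is a unitary bundle isomorphism, $F^{-1}$ coincides with the fiberwise Hermitian adjoint, so from the definition $d_{F^*A} = F^{-1}d_AF$ together with unitarity of $F$ one obtains $d_{F^*A}^{*} = F^{-1}d_A^{*}F$. Combined with $F^*q = F^{-1}qF$ this proves the operator identity. Then if $u$ solves $\Lapl_{g,A,q}u = 0$ with $u|_{\partial M} = Ff$, the section $v := F^{-1}u$ solves the corresponding problem for $\Lapl_{g,F^*A,F^*q}$ with data $f$; since $F$ acts fiberwise it commutes with evaluation at the boundary and with contraction by $\nu$, so $\Lambda_{g,F^*A,F^*q}f = (d_{F^*A}v)(\nu)|_{\partial M} = F^{-1}(d_Au)(\nu)|_{\partial M} = F^{-1}\Lambda_{g,A,q}Ff$.

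For the conjugation identity I would apply complex conjugation to the coordinate expression \eqref{eq:lapl-formula}. The metric $g$ and potential $q$ are real-valued while $A$ is purely imaginary, so conjugation sends each $A_i$ to $-A_i$ and leaves the rest invariant, yielding $\overline{\Lapl_{g,A,q}u} = \Lapl_{g,-A,q}\bar u$. Hence if $u$ solves the Dirichlet problem for $\Lapl_{g,A,q}$ with boundary data $Cf = \bar f$, then $\bar u$ solves the Dirichlet problem for $\Lapl_{g,-A,q}$ with data $f$. Applying the same conjugation to $d_Au = du + Au$ gives $\overline{d_Au} = d_{-A}\bar u$, whence $C^{-1}\Lambda_{g,A,q}Cf = \overline{(d_Au)(\nu)|_{\partial M}} = (d_{-A}\bar u)(\nu)|_{\partial M} = \Lambda_{g,-A,q}f$.

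For the conformal identity I would again read off the transformation from \eqref{eq:lapl-formula}. In dimension two we have $\sqrt{\det(cg)} = c\sqrt{\det g}$ and $(cg)^{ij} = c^{-1}g^{ij}$, so the conformal factors inside the second-order term cancel up to an overall $c^{-1}$, giving $\Lapl_{cg,A,q} = c^{-1}\Lapl_{g,A,0} + q = c^{-1}\Lapl_{g,A,cq}$. Consequently the Dirichlet problems for $\Lapl_{cg,A,q}$ and for $\Lapl_{g,A,cq}$ with boundary data $f$ share the same solution $u$. The outer unit normals are related by $\nu_{cg} = c|_{\partial M}^{-1/2}\nu_g$, so $(d_Au)(\nu_{cg})|_{\partial M} = c|_{\partial M}^{-1/2}(d_Au)(\nu_g)|_{\partial M}$, which is the asserted identity with $c^{-1/2}$ acting as pointwise multiplication on $\partial M$.

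There is no conceptual obstacle; the bulk of the work is the operator-level transformation law for $\Lapl_{g,A,q}$ in each case, after which the DN map identity follows from the uniqueness of the Dirichlet problem. The only slightly delicate bookkeeping is in the gauge transformation, where unitarity of $F$ must be used carefully to move $F^{-1}$ through both $d_A$ and its formal adjoint; I would treat this step first.
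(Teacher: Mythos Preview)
Your proposal is correct and follows essentially the same approach as the paper: in each case you first establish the transformation law for the magnetic Laplacian (via $d_{F^*A}=F^{-1}d_AF$, conjugation of the coordinate expression, and the two-dimensional conformal scaling, respectively) and then transfer it to the DN map through uniqueness of the Dirichlet problem and the transformation of the normal. The only cosmetic difference is that for the conjugation identity the paper argues abstractly via $C^{-1}d_AC=d_{-A}$ rather than through the coordinate formula, but this is the same computation.
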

In the second part of the proposition we restricted to $\dim M = 2$ and $E = M \times \mathbb{C}$ for simplicity, but suitable versions of the identities hold in general.
\begin{proof}
     By definition $d_{F^*A} = F^{-1} d_A F$ and by taking adjoints we have $d_{F^*A}^* = F^{-1} d_A^* F$. It follows that $F^{-1}\mc{L}_{g, A, q} F = \mc{L}_{g, F^*A, F^*q}$. Therefore, if $f \in C^\infty(\partial M; E|_{\partial M})$ and $\mc{L}_{g, A, q} u = 0$ with $u|_{\partial M} = f$, then $\mc{L}_{g, F^*A, F^*q} F^{-1}u = 0$, $F^{-1} u|_{\partial M} = F^{-1} f$, and so 
     \[
        \Lambda_{g, F^*A, F^*q} F^{-1} f = \iota_{\nu} d_{F^*A} F^{-1} u = F^{-1} \iota_{\nu} d_A u = F^{-1} \Lambda_{g, A, q} f,
    \]
    proving the claim.
    \medskip
     
    For the second identity, since $A$ is skew-Hermitian, $A^* = \overline{A} = -A$, we get that $C^{-1}d_A C = d_{-A}$ and similarly for the adjoints. Therefore $C^{-1} \mc{L}_{g, A, q} C = \mc{L}_{g, -A, q}$. If $u$ and $f$ are as in the preceding paragraph, then $\mc{L}_{g, -A, q} Cu = 0$, $Cu|_{\partial M} = Cf$, and so 
    \[
        \Lambda_{g, -A, q} Cf = \iota_{\nu} d_{-A} C u = C \iota_{\nu} d_A u = C\Lambda_{g, A, q}f.
    \]

    Finally, note that (using the local coordinate expression for $\mc{L}_{g, A, q}$) we have $\mc{L}_{cg, A, q} = c^{-1} \mc{L}_{g, A, cq}$. If $\nu_c$ denotes the outer boundary normal with respect to $cg$, we see that $\nu_c = c^{-\frac{1}{2}}\nu$. Thus if $\mc{L}_{cg, A, q}u = 0$, $u|_{\partial M} = f$, then also $\mc{L}_{g, A, cq} u = 0$, and we get
    \[
        \Lambda_{cg, A, q}f = \iota_{\nu_c} d_A u|_{\partial M} = c^{-\frac{1}{2}} \Lambda_{g, A, cq}f.
    \]
    This completes the proof.
\end{proof}

\subsection{Examples}\label{ssec:examples} Next we discuss examples for which we can compute the eigenvalues and eigenfunctions of $\Lambda_{g, A, q}$ explicitly.
We use the short hand notation $\Lambda_{A, q}$ for $\Lambda_{g, A, q}$ whenever the metric $g$ is clear from the context. We do the same for $\mc{L}_{g, A, q}$.

\medskip

\begin{ex}[Cylinders I]\rm\label{ex:I} We start with a general construction, similar to \cite[Example 1.3.3]{Girouard-Polterovich-17}. Let $(N_0, g_0)$ be a closed Riemannian manifold, $L > 0$ a positive real number, and $\pi: N = N_0\times  [-L, L] \to N_0$ the product Riemannian manifold with metric $g$. Consider a purely imaginary $1$-form $A$, and a real-valued scalar potential $q$ on $N_0$. The spectrum of $\Lapl_{A, q}$ on $N_0$ is discrete, with smooth eigefunctions. Denote by $v_1, \dotsc, v_\ell$ an $L^2$-orthonormal basis of the zero eigenspace (possibly empty). For non-zero eigenvalues, let $(u_k)_{k = 1}^\infty$ denote an $L^2$-orthonormal eigenbasis, such that (note here $\lambda_k \neq 0$)
\[
    \Lapl_{A, q} u_k = \lambda_k u_k, \quad u_k \in C^\infty(N_0), \quad k = 1, 2, \dotsc.
\]
It is easy to check that
\begin{align}\label{eq:zeroeigenvalue-1}
    v_1, \dotso, v_\ell,
\end{align}
as well as 
\begin{equation}\label{eq:zeroeigenvalue-2}
    tv_1, \dotsc, tv_\ell,
\end{equation}
and
\begin{align}\label{eq:othereigenvalue}
    \cosh\left(\sqrt{\lambda_k}t\right) u_k(x)\quad\mathrm{and}\quad \sinh\left(\sqrt{\lambda_k}t\right)u_k(x), \quad k = 1, 2, \dotsc,
\end{align}
are in the kernel of $\Lapl_{\pi^*A, \pi^*q} = -\partial_t^2 + \Lapl_{A, q}$. Moreover, using an expansion into the eigenbasis of $\Lapl_{A, q}$ on each $N_0 \times \{t\}$, $t \in [-L, L]$ it is easy to see that $0$ is not a Dirichlet eigenvalue of $\Lapl_{\pi^*A, \pi^*q}$. It is immediate that \eqref{eq:zeroeigenvalue-1}, \eqref{eq:zeroeigenvalue-2}, and \eqref{eq:othereigenvalue}, are eigenfunctions of $\Lambda_{g, \pi^*A, \pi^*q}$ with eigenvalues 
\[
    0,\quad \tfrac{1}{L},\quad \sqrt{\lambda_k} \tanh\left(\sqrt{\lambda_k} L\right) \quad\mathrm{and}\quad \sqrt{\lambda_k} \coth\left(\sqrt{\lambda_k} L\right),
\]
respectively. They in fact form an eigenbasis of $\Lambda_{g, \pi^*A, \pi^*q}$, as can be see using again the expansion into the eigenbasis of $\Lapl_{A, q}$ on the boundary $\partial N$.

\end{ex}

\begin{ex}[Cylinders II]\label{ex:planar-annulus}\rm We now specialise Example \ref{ex:I} to $N_0 = \mathbb{S}^1 = \mathbb{R}/(2\pi \mathbb{Z})$ with the standard metric (of length $2\pi$), equipped with a purely imaginary $1$-form $A = H(x)\,dx$ and a real-valued potential $q$. We determine the spectrum of $\Lapl_{A, 0}$ on $\mathbb{S}^1$. Firstly, note that $A$ is cohomologous to $c\,dx$ (i.e. $\exists f \in C^\infty(\mathbb{S}^1, i\mathbb{R})$, such that $A = c\,dx + df$), where $c$ is given by the flux
\begin{align}\label{eq:cflux}
    c = \frac{1}{2\pi}\int_{\mathbb{S}^1} A = \frac{1}{2\pi} \int_0^{2\pi} H(x)\, dx \in i\mathbb{R}.
\end{align}
Since $df = e^{-f} d(e^{f})$, by the proof of Proposition \ref{prop:symmetry}, $\Lapl_{A, q}$ is unitarily equivalent to $\Lapl_{c\,dx, q}$ and so it suffices to consider $A = c\,dx$. By \eqref{eq:lapl-formula}, we have
\[
    \Lapl_{A, q} = d_A^*d_A = -(\partial_x + c)^2 + q = -\partial_x^2 -2c\partial_x - c^2 + q.
\]
Therefore, the spectrum of $\Lapl_{A, 0}$ is determined by the equation
\[
    (-\partial_x^2 - 2c\partial_x - c^2) u = \lambda u,\quad 0 \neq u \in C^\infty(\mathbb{S}^1).
\]
This is an ODE with constant coefficients, and an eigenbasis of $\Lapl_{A, 0}$ is given by
\[
    u_k(x) = e^{ikx}, \quad \lambda_k = (k - ic)^2, \quad k \in \mathbb{Z}.
\]
We see that $0$ is an eigenvalue if and only if $c \in i\mathbb{Z}$. Therefore a Steklov eigenbasis associated to non-zero eigenvalues, on $N = \mathbb{S}^1 \times [-L, L]$ is given by
\[
    \cosh(|k - ic|t) e^{ikx},\quad \sinh(|k - ic|t) e^{ikx},\quad k \in \mathbb{Z},
\]
and its eigenvalues are given by
\[
    |k - ic| \tanh(|k - ic| L),\quad |k - ic| \coth(|k - ic| L), \quad k \in \mathbb{Z},
\]
respectively. Since $\coth x, \tanh x \to 1$ as $x\to \infty$, this is in compliance with Theorem \ref{thm:sharpasymptotics} below. Moreover, since $\coth x = 1 + \mc{O}(e^{-2x}) = \tanh x$ as $x \to \infty$, in this example we get an even more precise eigenvalue asymptotics, with an \emph{exponential} remainder estimate. We also see that the coefficients introduced in Theorem \ref{thma} satisfy $b_k^{(1)} = b_k^{(2)} = 0$ for $k \geq 2$ in this case.
\end{ex}

\begin{rem}\rm \label{rem:aharonov-bohm}
    Another case where one can compute the spectrum of the magnetic DN map explicitly is on the unit disk with Euclidean metric and $A = f(x_1, x_2) (-x_2 dx_1 + x_1 dx_2)$, with $f$ a smooth function depending only on the radius. The case when $f$ is equal to a constant function corresponds to constant magnetic field (i.e. $dA$ is a constant multiple of $dx_1 \wedge dx_2$). The Laplacian $\mc{L}_{A, 0}$ commutes with the vector field generating rotations and so in polar coordinates $(r, \theta)$ we can assume that eigenfunctions are a product of $e^{ik\theta}$ (for $k \in \mathbb{Z}$) and a function depending only on the radius $g(r)$. The solutions to the eigenvalue problem then reduce to a well-known ODE for $g(r)$.

    As mentioned in the introduction, the case of 
    \[
        f(x_1, x_2) = \frac{c}{x_1^2 + x_2^2},\quad c \in i \mathbb{R},
    \]
    has attracted particular attention and with this choice of $f$, the magnetic potential $A$ is referred to as the \emph{Aharonov-Bohm potential} \cite{Colbois-Provenzano-Savo-22} (note that $A$ is \emph{not} smooth). The spectrum in this case was given in \eqref{eq:ah-bohm-spectrum} (see \cite[Theorem 33]{Colbois-Provenzano-Savo-22}).
\end{rem}

\section{Structure of pseudodifferential operators on the circle}\label{sec:structure-pdo}
Here we prove results on the structure of elliptic pseudodifferential operators on the circle by conjugating them to a ``normal form" (see \S \ref{ssec:normal-form}), and using this we show fine spectral asymptotics (see \S \ref{ssec:spectral-asymptotics-general}). We compute the full symbol of the magnetic DN map in \S \ref{ssec:symbol-expansion} and apply the preceding to show Theorem \ref{thma} in \S \ref{ssec:spectral-asymptotics-DN-map}. We recall some properties of quantization on the circle in \S \ref{ssec:quantization-circle}. 

We remark that the structure of pseudodifferential operators on the circle was proved originally in \cite{Rozenbljum-78} (see also \cite{Agranovich-84}), and here we provide a simplified proof based only on the symbolic calculus, which gives a unitary conjugacy to a normal form, and which also enables us to compute explicitly first few symbols.

We will use the standard theory of pseudodifferential operators, see \cite[Appendix E]{Dyatlov-Zworski-19} or \cite{Grigis-Sjoestrand-94}. We now introduce some notation in the particular setting of the unit circle $\mathbb{S}^1$. For $m \in \mathbb{R}$ we denote by $S^m(T^*\mathbb{S}^1)$ the space of \emph{classical symbols} on $\mathbb{S}^1$ of order $m$, that is those that admit an asymptotic expansion into positively homogeneous symbols. Let $\Psi^m(\mathbb{S}^1)$ denote the space of \emph{classical pseudodifferential operators} of order $m$ on $\mathbb{S}^1$, and write $\Psi^{-\infty}(\mathbb{S}^1) = \cap_{m \in \mathbb{R}} \Psi^{m}(\mathbb{S}^1)$ for the space of \emph{smoothing} operators. For $A \in \Psi^m(\mathbb{S}^1)$, denote by $\sigma_A$ its \emph{principal symbol}, which can be identified with an $m$-positively homogeneous function on $T^*\mathbb{S}^1$. We will write $\Op: S^m(T^*\mathbb{S}^1) \to \Psi^m(\mathbb{S}^1)$ for a \emph{quantisation} procedure on $\mathbb{S}^1$. In general, the choice of a quantisation procedure is arbitrary, but on $\mathbb{S}^1$ this choice can be made canonical up to smoothing operators, see Lemma \ref{lemma:unique-quantisation}.  

We emphasise that $m$ will denote order of the pseudodifferential operator in \S \ref{ssec:quantization-circle}--\ref{ssec:spectral-asymptotics-general}, and in \S \ref{ssec:spectral-asymptotics-DN-map} it will denote the number of boundary components; there is however no clash of notation.

\smallskip

\subsection{Quantization on the circle}\label{ssec:quantization-circle} In this paper, we will work exclusively with \emph{classical} pseudodifferential operators. We will view $\mathbb{S}^1$ as $\mathbb{R}/(2\pi \mathbb{Z})$, equipped with the canonical vector field $\partial_x$ and $1$-form $dx$. In what follows we will often identify $T^*\mathbb{S}^1 \cong \mathbb{S}^1 \times \mathbb{R}$ using $dx$ and in particular we will identify co-vectors $\xi$ with real numbers. Let $a \in S^m(T^*\mathbb{S}^1)$ be a symbol. Consider a smooth partition of unity $\chi_1 + \chi_2 = 1$ on $\mathbb{S}^1$, such that $\supp \chi_i \neq \mathbb{S}^1$ for $i = 1, 2$. There is a natural coordinate on a small neighbourhood $U_i$ of $\supp \chi_i$ (given by identification with a subset of $\mathbb{R}$); write $\varphi_i$ for the coordinate function. The quantisation $\Op(\bullet)$ on $\mathbb{S}^1$ is then defined as usual by
\begin{equation}\label{eq:quantization-formula-standard}
    \Op(a) := \psi_1 \varphi_1^* \Op^{\mathbb{R}}(\widetilde{\varphi}_{1*}a) \varphi_{1*} \chi_1 + \psi_2 \varphi_2^* \Op^{\mathbb{R}}(\widetilde{\varphi}_{2*}a) \varphi_{2*} \chi_2, 
\end{equation}
where for $i = 1, 2$, $\widetilde{\varphi}_{i}: T^*U_i \to T^*\varphi_i(U_i)$ denotes the lift of $\varphi_i$ to the cotangent bundle, $\psi_i \in C^\infty(\mathbb{S}^1)$ are such that $\psi_i \chi_i = \chi_i$ and $\supp \psi_i \subset U_i \neq \mathbb{S}^1$. Also, $\Op^{\mathbb{R}}(\bullet)$ denotes the usual quantisation on $\mathbb{R}$, 
\[
    \Op^{\mathbb{R}}(b)u (x) = (2\pi)^{-1} \int_{y \in \mathbb{R}} \int_{\xi \in \mathbb{R}} e^{i(x - y)\xi} b(x, \xi) u(y)\, d\xi dy, \quad u \in C_{\mathrm{comp}}^\infty(\mathbb{R}),\, b \in S^m(T^*\mathbb{R}).
\]

\begin{lemma}\label{lemma:unique-quantisation}
    Up to smoothing terms, there is a unique quantisation procedure $\Op$ on $\mathbb{S}^1$. More precisely, for any other quantisation $\Op'$ (corresponding to $\chi_i$, $\psi_i$, and $\varphi_i$ as above), for any $a \in S^m(T^*\mathbb{S}^1)$, $\Op(a) - \Op'(a) \in \Psi^{-\infty}(\mathbb{S}^1)$. In particular, for any $A \in \Psi^m(\mathbb{S}^1)$, the full symbol $a \in S^m(T^*\mathbb{S}^1)$ is well-defined. Moreover, we have $\Op(a) - A \in \Psi^{-\infty}(\mathbb{S}^1)$, where
    \[
        Au(x) := (2\pi)^{-1} \int_{\mathbb{R}} \int_0^{2\pi} e^{i(x_0 - y)\xi} a(x, \xi) u(y)\, dy d\xi, \quad u \in C^\infty(\mathbb{S}^1),
    \]
    where we identified $[0, 2\pi)$ with $\mathbb{S}^1$, and wrote $x_0 \in \mathbb{R}$ for an arbitrary real number such that $x \equiv x_0 \mod 2\pi$.
\end{lemma}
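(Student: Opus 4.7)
The lemma contains three assertions: (i) $\Op$ is unique modulo smoothing, (ii) every $A \in \Psi^m(\mathbb{S}^1)$ has a well-defined full symbol modulo $S^{-\infty}$, and (iii) the integral formula yields a specific representative.

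The key observation underlying parts (i) and (ii) is that on $\mathbb{S}^1 = \mathbb{R}/(2\pi\mathbb{Z})$ every coordinate transition is a translation $T_h(x) = x + h$, and the $\mathbb{R}$-quantization is equivariant under translations: $T_h^* \circ \Op^{\mathbb{R}}(a) \circ T_{-h}^* = \Op^{\mathbb{R}}(T_h^*a)$, with symbols transforming by pullback. Combined with pseudolocality, namely $\chi \Op^{\mathbb{R}}(a) \psi \in \Psi^{-\infty}(\mathbb{R})$ whenever $\chi, \psi \in C^\infty_c(\mathbb{R})$ have disjoint supports, a term-by-term analysis of the bilinear expression \eqref{eq:quantization-formula-standard} applied to $\Op(a) - \Op'(a)$ gives (i): in each overlap the two local $\mathbb{R}$-quantizations of $a$ coincide under the translation, while combinations of cutoffs with disjoint support contribute only smoothing operators. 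For (ii), the full symbol of $A$ in each local chart is well-defined modulo $S^{-\infty}$ by standard theory on $\mathbb{R}$; translation-compatibility of transitions on $\mathbb{S}^1$ then trivializes the gluing of these local symbols to a globally defined element $a \in S^m(T^*\mathbb{S}^1)/S^{-\infty}$, and part (i) ensures that applying any quantization to $a$ recovers $A$ modulo smoothing.

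For part (iii), fix a smooth chart $\tau$ centred at a point $p \in \mathbb{S}^1$ and take $u \in C^\infty(\mathbb{S}^1)$ supported near $p$, with $\tilde u := u \circ \tau^{-1}$. When $p$ lies away from the basepoint $0 \in \mathbb{S}^1$ of the identification $\mathbb{S}^1 \cong [0, 2\pi)$, the formula for $Au$ reduces literally to the $\mathbb{R}$-quantization $\Op^{\mathbb{R}}(\tau_*a)(\tilde u)$ in the $\tau$-chart. When $p = 0$, the $y$-integration $\int_0^{2\pi}$ breaks at the cut into pieces over $[0, \epsilon)$ and $[2\pi - \epsilon, 2\pi)$; substituting $y \mapsto y + 2\pi$ on the second piece and using the $2\pi$-periodicity of $u$ and $a$ reassembles the two contributions into $\Op^{\mathbb{R}}(\tau_*a)(\tilde u)$ plus a remainder whose amplitude carries the oscillatory factor $e^{-2\pi i \xi} - 1$. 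Expanding this factor as $e^{-2\pi i\xi} - 1$ writes the remainder kernel as a difference of two standard $\mathbb{R}$-pdo kernels, one shifted in the second variable by $\pm 2\pi$: this shift pushes the singular support of the resulting oscillatory integral off the local diagonal of the $\tau$-square by a distance of order $2\pi$, so on the relevant region of the $\tau$-square the remainder kernel is smooth and the remainder is a smoothing operator. Combined with (i) this yields $A - \Op(a) \in \Psi^{-\infty}(\mathbb{S}^1)$. The main obstacle is precisely this last step: one must carefully match the integral formula — which employs the non-smooth measurable identification $\mathbb{S}^1 \cong [0, 2\pi)$ — against a genuine smooth chart near the basepoint, using the $2\pi$-shift in the Fourier phase provided by the oscillatory factor to absorb the mismatch into a smoothing remainder.
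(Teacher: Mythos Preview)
Your sketches for (i) and (ii) are correct and essentially coincide with the paper's Step~3: the observation that canonical transitions on $\mathbb{S}^1$ are translations, together with pseudolocality for disjointly supported cutoffs, is exactly what is used there. (The paper also supplies an independent kernel-based argument in its Steps~1--2, recovering the global symbol by Fourier-transforming the Schwartz kernel cut off near the diagonal; this is a genuinely different route that you did not take, and it has the advantage of producing a canonical representative $b(x,\xi)$ of the full symbol directly.)

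For (iii) there is a gap in the basepoint analysis. After the substitution $y' = y - 2\pi$ on $[2\pi-\epsilon, 2\pi)$ you correctly arrive at the remainder
\[
R(x_0) = (2\pi)^{-1}\!\int_{\mathbb{R}}\int_{-\epsilon}^{0} e^{i(x_0-y')\xi}\bigl(e^{-2\pi i\xi}-1\bigr)a(x,\xi)\,\tilde u(y')\,dy'\,d\xi.
\]
Splitting $e^{-2\pi i\xi}-1$ produces two terms. The $e^{-2\pi i\xi}$ term has phase $e^{i(x_0-y'-2\pi)\xi}$ and is indeed smooth on the $\tau$-square, as you say. But the ``$-1$'' term has kernel $-\int e^{i(x_0-y')\xi}a(x,\xi)\,d\xi$, whose singular support is the diagonal $x_0=y'$; with $x_0$ ranging over the $\tau$-chart $(-\epsilon,\epsilon)$ and $y'\in(-\epsilon,0)$ this diagonal meets the region, so the remainder kernel is \emph{not} smooth and $R$ is not smoothing. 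Your sentence ``this shift pushes the singular support \dots\ off the local diagonal'' accounts only for the shifted term. The paper avoids this corner issue altogether: it notes that the integral formula agrees \emph{exactly} with $\varphi^*\Op^{\mathbb R}(\widetilde\varphi_*a)\varphi_*$ on any connected chart $\varphi:U\to\mathbb R$, extends each $\varphi_i$ to $\mathbb{S}^1$ minus a single (freely chosen) point, and then writes $A\chi_i$ as $\psi_i\varphi_i^*\Op^{\mathbb R}(\widetilde\varphi_{i*}a)\varphi_{i*}\chi_i$ plus terms of the form $\rho_j(1-\psi_i)A\chi_i$ with disjointly supported cutoffs sandwiching a local pseudodifferential operator, hence smoothing by pseudolocality. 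The fix, in other words, is to reuse your own tool from part (i) via a partition of unity, rather than to estimate the oscillatory integral directly at the cut.
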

\begin{proof}
    We divide the proof into several steps.
    \medskip

    \emph{Step 1.} Let $A \in \Psi^m(\mathbb{S}^1)$ and let $K(x, y)$ be the Schwartz kernel of $A$. Let $\rho \in C_{\mathrm{comp}}(\mathbb{R})$ be a cut off function such that $\supp \rho \subset (-2\delta, 2\delta)$ and $\rho = 1$ on $(-\delta, \delta)$, where $\delta \in (0, \pi/4)$. Let $B$ the operator defined by the Schwartz kernel $K(x, y) \rho(y - x)$; as $K$ is smooth outside of the diagonal, $A - B  \in \Psi^{-\infty}(\mathbb{S}^1)$. 
    \medskip

    \emph{Step 2.} Define
    \[
        b(x, \xi) := \int_{x_0 - \pi/2}^{x_0 + \pi/2} e^{i(y - x_0) \xi} K(x_0, y) \rho(y - x_0)\, dy,
    \]
    where $x_0 \in \mathbb{R}$ is arbitrary such that $x - x_0 \in 2\pi \mathbb{Z}$, and we view $K(\bullet, \bullet)$ as a $2\pi \mathbb{Z} \times 2\pi \mathbb{Z}$-periodic distribution on $\mathbb{R} \times \mathbb{R}$. It is straightforward to check that this definition is independent of the choice of $x_0$. 

    For any (canonical) chart $\varphi: U \to \varphi(U) \subset \mathbb{R}$, and cut-off functions $\psi, \chi$ in $U$, by the definition of pseudodifferential operators, we see that $\varphi_{*}\psi B \chi \varphi^* \in \Psi^m(\varphi(U))$. Moreover, $\varphi_{*}\psi B \chi \varphi^* = \Op^{\mathbb{R}}(c)$, where $c$ is given by oscillatory testing
    \[
        c(x, \xi) = e^{-ix\xi} \varphi_{*}\psi B \chi \varphi^*e^{i\bullet \xi},
    \]
    see \cite[Theorem 3.4]{Grigis-Sjoestrand-94}. Observe that for any open set $V$ with $V \subset \{\chi = 1\} \cap \{\psi = 1\}$, we may take $\delta$ small enough such that $\widetilde{\varphi}^*c = b$ on $V$. In particular, it follows that
    \[
        \psi_i B \chi_i = \psi_i \varphi_i^* \Op^{\mathbb{R}}(\widetilde{\varphi}_{i*}b) \varphi_{i*} \chi_i, \quad i = 1, 2.
    \]
    Therefore,
    \[
        \Op(b) = \sum_{i = 1}^2 \psi_i \varphi_i^* \Op^{\mathbb{R}}(b) \varphi_{i*} \chi_i = \sum_{i = 1}^2 \psi_i B \chi_i \equiv B \mod \Psi^{-\infty}(\mathbb{S}^1),
    \]
    where in the last congruence, we used that
    \[
        B = (1 - \psi_1) B \chi_1 + \psi_1 B \chi_1 + (1 - \psi_2) B \chi_2 + \psi_2 B \chi_2,
    \]
    and that $(1 - \psi_1) B \chi_1, (1 - \psi_2)B \chi_2 \in \Psi^{-\infty}(\mathbb{S}^1)$, as by definition $\chi_i$ and $1 - \psi_i$ have disjoint supports, for $i = 1, 2$. Since $A \equiv B \mod \Psi^{-\infty}(\mathbb{S}^1)$, this proves the first claim.

    For the next claim, by definition any $A \in \Psi^m(\mathbb{S}^1)$ can, up to smoothing terms, be written as $\Op(a)$ for some $a \in S^m(T^*\mathbb{S}^1)$. If $\Op(a - a')$ is smoothing for some $a' \in S^m(T^*\mathbb{S}^1)$, taking the principal symbol we get $a - a' \in S^{m - 1}(T^*\mathbb{S}^1)$. Iterating, we get $a - a' \in S^{-\infty}(T^*\mathbb{S}^1)$. Thus, the full symbol of $A$ is well-defined modulo $S^{-\infty}(T^*\mathbb{S}^1)$.

\medskip

\emph{Step 3.} Here we prove the last identity and hence give an alternative proof to the first claim. We first note that for a canonical coordinate chart $\varphi: U \to \varphi(U) \subset \mathbb{S}^1$, where $U \subset \mathbb{S}^1$ is open, connected, with endpoints $x_1$ and $x_2$, we have
\begin{equation}\label{eq:same-chart}
    \varphi^* \Op(\widetilde{\varphi}_* a)\varphi_* u(x) = (2\pi)^{-1} \int_{\mathbb{R}} \int_{x_1}^{x_2} e^{i(\varphi(x) - \varphi(y)) \xi} a(x, \xi) u(y)\, dy d\xi, \quad u \in C_{\mathrm{comp}}^\infty(U).
\end{equation}
Notice that for any other canonical coordinate chart $\psi: U \to \psi(U)$, by connectedness we have $\psi(x) - \varphi(x)$ is constant for $x \in U$; therefore $\varphi^* \Op(\widetilde{\varphi}_* a)\varphi_* u$ is independent of $\varphi$.

We now observe that for $i = 1, 2$, the coordinate charts $\varphi_i$ can be extended to $\overline{\varphi}_i$, coordinate chart on $\mathbb{S}^1$ with one point deleted, and that we may choose this point as well. This process does not affect the operator $\Op(a)$, and by the preceding paragraph, $\overline{\varphi}_i^* \Op^{\mathbb{R}}(\widetilde{\overline{\varphi}}_{i*}a) \overline{\varphi}_{i*} = A$ for functions with support away from the deleted point. For $i = 1, 2$, let $\rho_i$ be a partition of unity, such that $\supp \rho_i \cap \{\psi_1 = 1\} \neq \{\psi_1 = 1\}$. We then observe that
\[
    A \chi_1 = \psi_1 \varphi_1^* \Op^{\mathbb{R}}(\widetilde{\varphi}_{1*}a) \varphi_{1*} \chi_1 + \rho_1 (1 - \psi_1) A \chi_1 + \rho_2 (1 - \psi_1) A \chi_1.
\]
By the preceding discussion, the $A$ in the last two terms can be replaced by $\varphi^* \Op^{\mathbb{R}}(\widetilde{\varphi}a) \varphi_*$ for some suitable $\varphi$ extending $\varphi_1$, and since $(1 - \psi_1) \chi_1 = 0$, these two terms are smoothing. Same kind of argument holds for $\chi_2$, and proves the claim.
\end{proof}

We now prove an auxiliary claim for the symbol of the adjoint (all adjoints are with respect to the canonical Lebesgue measure $dx$).

\begin{lemma}\label{lemma:adjoint}
    Let $a \in S^m(T^*\mathbb{S}^1)$ and write $\Op(a)^* \equiv \Op(a^*) \mod \Psi^{-\infty}(\mathbb{S}^1)$ for some $a^* \in S^m(T^*\mathbb{S}^1)$. Then
    \[
        a^* \equiv \overline{a} - \partial_{\xi} \partial_x \overline{a} - \frac{1}{2} \partial_{\xi}^2 \partial_x^2 \overline{a} + \dotsb + \frac{(-i)^k}{k!} \partial_{\xi}^k \partial_x^k \overline{a} + \dotsb \mod S^{-\infty}(T^*\mathbb{S}^1).
    \]
\end{lemma}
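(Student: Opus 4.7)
The plan is to reduce the computation to the standard adjoint formula on $\mathbb{R}$ and transfer it to $\mathbb{S}^1$ via Lemma \ref{lemma:unique-quantisation}. Recall from e.g.\ \cite[Theorem 3.6]{Grigis-Sjoestrand-94} that for $b \in S^m(T^*\mathbb{R})$ one has $\Op^{\mathbb{R}}(b)^* \equiv \Op^{\mathbb{R}}(b^*) \mod \Psi^{-\infty}(\mathbb{R})$ with
\[
    b^* \sim \sum_{k \geq 0} \frac{(-i)^k}{k!} \partial_\xi^k \partial_x^k \overline{b}.
\]
This is proved by writing the Schwartz kernel of $\Op^{\mathbb{R}}(b)^*$ as the oscillatory integral $(2\pi)^{-1} \int e^{i(x - y)\xi} \overline{b(y, \xi)}\, d\xi$, Taylor expanding $\overline{b}(y,\xi) = \sum_{k = 0}^N \frac{(y - x)^k}{k!} \partial_x^k \overline{b}(x,\xi) + R_N$, and converting factors of $(y - x)^k$ into $\xi$-derivatives via the identity $(y - x)^k e^{i(x - y)\xi} = (-D_\xi)^k e^{i(x - y)\xi}$ and integration by parts. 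The remainder $R_N$ contributes a symbol of order $m - N - 1$, which is absorbed in the asymptotic sum.

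To transfer to $\mathbb{S}^1$, I would start from the localized formula \eqref{eq:quantization-formula-standard}, namely $\Op(a) = \sum_{i = 1}^2 \psi_i \varphi_i^* \Op^{\mathbb{R}}(\widetilde{\varphi}_{i*}a) \varphi_{i*} \chi_i$. Since each canonical chart $\varphi_i$ is (locally) a translation and in particular preserves the Lebesgue measure, the operators $\varphi_i^*$ and $\varphi_{i*}$ are mutual formal adjoints, so taking adjoints yields
\[
    \Op(a)^* = \sum_{i = 1}^2 \chi_i \varphi_i^* \Op^{\mathbb{R}}(\widetilde{\varphi}_{i*}a)^* \varphi_{i*} \psi_i.
\]
Substituting the $\mathbb{R}$-adjoint formula above and noting that $\widetilde{\varphi}_{i*}$ commutes with both $\partial_x$ and $\partial_\xi$ (since each $\varphi_i$ is translation by a constant), we rewrite the right hand side as $\sum_{i = 1}^2 \chi_i \varphi_i^* \Op^{\mathbb{R}}(\widetilde{\varphi}_{i*} a^*) \varphi_{i*} \psi_i$ modulo smoothing, where $a^*$ is the claimed asymptotic symbol.

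The main technical point is to verify that this alternative localized expression, with the roles of $\chi_i$ and $\psi_i$ swapped relative to \eqref{eq:quantization-formula-standard}, still agrees with $\Op(a^*)$ modulo $\Psi^{-\infty}(\mathbb{S}^1)$. This is handled exactly as in Step~3 of the proof of Lemma~\ref{lemma:unique-quantisation}: since the full symbol of any element of $\Psi^m(\mathbb{S}^1)$ is well-defined modulo $S^{-\infty}(T^*\mathbb{S}^1)$, and both expressions are valid quantisations with the same leading behaviour in every order, they differ only by a smoothing operator. This yields $\Op(a)^* \equiv \Op(a^*) \mod \Psi^{-\infty}(\mathbb{S}^1)$ with $a^*$ as claimed.
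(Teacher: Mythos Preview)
Your proof is correct and follows essentially the same approach as the paper: both reduce to the Euclidean adjoint formula from \cite{Grigis-Sjoestrand-94} after taking adjoints in the localized quantisation formula \eqref{eq:quantization-formula-standard}. The only organizational difference is that the paper explicitly massages the expression $\sum_i \chi_i \varphi_i^* \Op^{\mathbb{R}}(\widetilde{\varphi}_{i*}a)^* \varphi_{i*} \psi_i$ back into the standard form $\sum_i \psi_i \varphi_i^* \Op^{\mathbb{R}}(\cdot) \varphi_{i*} \chi_i$ (by inserting $\chi_1+\chi_2=1$ and cancelling cross terms modulo smoothing), whereas you invoke the uniqueness statement of Lemma~\ref{lemma:unique-quantisation}. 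Your appeal is justified, though it would be cleaner to note directly that the full symbol of $\sum_i \chi_i \Op(a^*)\psi_i$ equals $a^*$: the $k=0$ term in the composition expansion gives $\sum_i \chi_i\psi_i a^* = a^*$, and all $k\geq 1$ terms vanish since $\chi_i\,\partial_x^k\psi_i \equiv 0$ (as $\psi_i=1$ on $\{\chi_i\neq 0\}$).
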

\begin{proof}
    We compute
    \begin{align*}
        &\Op(a)^* = \sum_{i = 1}^2 \psi_i \chi_i \varphi_i^* \Op^{\mathbb{R}}(\widetilde{\varphi}_{i*}a)^* \varphi_{i*} \psi_i (\chi_1 + \chi_2)\\
        &= \sum_{i = 1}^2 \psi_i \varphi_i^* \Op^{\mathbb{R}}(\widetilde{\varphi}_{i*}a)^* \varphi_{i*} \chi_i - \chi_2 \psi_1 \varphi_1^* \Op^{\mathbb{R}}(\widetilde{\varphi}_{1*}a)^* \varphi_{1*} \chi_1 - \chi_1 \psi_2 \varphi_2^* \Op^{\mathbb{R}}(\widetilde{\varphi}_{2*}a)^* \varphi_{2*} \chi_2\\ 
        & + \chi_1 \varphi_1^* \Op^{\mathbb{R}}(\widetilde{\varphi}_{1*}a)^* \varphi_{1*} \psi_1 \chi_2  + \chi_2 \varphi_2^* \Op^{\mathbb{R}}(\widetilde{\varphi}_{2*}a)^* \varphi_{2*} \psi_2 \chi_1 \\
        & \equiv \sum_{i = 1}^2 \psi_i \varphi_i^* \Op^{\mathbb{R}}(\widetilde{\varphi}_{i*}a)^* \varphi_{i*} \chi_i \mod \Psi^{-\infty}(\mathbb{S}^1),
    \end{align*}
    where in the first line we used that $\psi_i \chi_i = \chi_i$ for $i = 1, 2$, and $\chi_1 + \chi_2 = 1$, in the equality we also used $\chi_1 + \chi_2 = 1$. In the last line we observed that fourth to last and last terms, as well as third to last and second to last terms, cancel up to smoothing terms; this follows by adding the symmetric terms $\psi_1$ and $\psi_2$ into suitable places and using that $(1 - \psi_i) \chi_i = 0$ for $i = 1, 2$, by definition, as well as by using \eqref{eq:same-chart}.

    We are now left to use the usual formula for the symbol of the adjoint in the Euclidean space, see for instance \cite[Theorem 3.5]{Grigis-Sjoestrand-94}. (Alternatively, we may use the global quantisation formula from Lemma \ref{lemma:unique-quantisation} directly, and then use \cite[Theorem 3.5]{Grigis-Sjoestrand-94}.)
\end{proof}

\subsection{Normal form}\label{ssec:normal-form} We are now in shape to prove the main technical ingredient towards the proof of spectral asymptotics for pseudodifferential operators on the circle. We prove that such operators can be conjugated to a \emph{normal form}, that is they can be conjugated to operators whose full symbol does not depend on the $x$-variable in $T^*\mathbb{S}^1$. We will write $T^*\mathbb{S}^1 \setminus 0$ for $T^*\mathbb{S}^1$ without the zero section.

\begin{theorem}\label{thm:normal-form}
    Let $m \neq 0$, and assume that $A \in \Psi^{m}(\mathbb{S}^1)$ is elliptic and self-adjoint. Moreover, assume that $\sigma_A(x, \xi) = \sigma_A(x, -\xi)$ (or $\sigma_A(x, \xi) = -\sigma_A(x, -\xi)$) for $(x, \xi) \in T^* \mathbb{S}^1 \setminus 0$. Then, there exists a unitary operator $K$, such that
    \[
        K^{-1} A K \equiv \Op(b) \mod \Psi^{-\infty}(\mathbb{S}^1),
    \]
    where $b \in S^m(T^*\mathbb{S}^1)$ is an $x$-independent symbol, $b \equiv \sum_{i = 0}^\infty b_i$, where for $i \in \mathbb{Z}_{\geq 0}$, we have $b_i \in S^{m - i}(T^*\mathbb{S}^1)$, $b_i$ is real-valued and does not depend on the $x$-variable. Moreover, each $b_i$ can be computed using an algorithmic procedure, and $\Op(b_i)$ is self-adjoint up to smoothing operators. There exist a diffeomorphism $\varphi: \mathbb{S}^1 \to \mathbb{S}^1$, and an elliptic and unitary pseudodifferential operator $L \in \Psi^{0}(\mathbb{S}^1)$ such that if $J := (\varphi^{-1})'$, then $K = \varphi^* J^{-\frac{1}{2}} L$. In fact, if $A \equiv \sum_{i = 0}^\infty \Op(a_i)$ is the symbol expansion of $A$, then
    \[
        b_0(\xi) = (2\pi)^{m} \left(\int_0^{2\pi} |a_0|(x, \xi)^{-m^{-1}}\, dx\right)^{-m} \sgn a_0(x, 1), \quad \xi > 0,
    \]
    and $b_0(\xi) = b_0(-\xi)$ (or $b_0(\xi) = - b_0(-\xi)$). Furthermore, there exists $p \in \mathbb{Z}$, such that
    \[
        b_1(\xi) = m p \xi^{-1} b_0(\xi) + \frac{1}{2\pi} \int_0^{2\pi} e_1(x, \xi)\, dx, 
    \]
    where
    \[
        e_1(x, \xi) = a_1(\varphi^{-1}x, J^{-1}(x) \xi) - \frac{i}{2} m^2 \frac{(\varphi^{-1})''(x)}{J^3(x)} \xi^{-1} a_0(\varphi^{-1}x, J^{-1}(x) \xi), \quad \xi \neq 0.
    \]
    Finally, if $\varphi = \mathrm{id}$, i.e. $a_0$ is $x$-independent, we have
    \[
        b_2(\xi) = \frac{1}{2\pi} \int_{0}^{2\pi} \left(a_2(x, \xi) +  \frac{m - 1}{m} \frac{a_1(b_1 - a_1)}{a_0} (x, \xi) - \frac{1}{2}\xi^{-2} m(m - 1) a_0 \frac{\partial_{x}^2 k_1}{k_1} (x, \xi)\right)\, dx,
    \]
    where $k_1 \in S^0(T^*\mathbb{S}^1)$ is defined in \eqref{eq:k1} below as a function of $a_0$, $a_1$, and $b_1$.
\end{theorem}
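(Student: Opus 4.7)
The plan is to decompose the desired unitary as $K = \varphi^* J^{-1/2} L$, where $\varphi^* J^{-1/2}$ is the unitary pullback (one checks $(\varphi^* J^{-1/2})^* = J^{1/2} \varphi_*$, hence $(\varphi^* J^{-1/2})^*(\varphi^* J^{-1/2}) = I$) that normalises the principal symbol, and $L \in \Psi^0(\mathbb{S}^1)$ is an elliptic unitary pseudodifferential operator built as an infinite iteration of small unitary conjugations that remove $x$-dependence from successively lower-order symbols. Both operations preserve formal self-adjointness, ellipticity, and parity of the symbol in $\xi$, so the resulting normal form will satisfy the required structural properties.

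\textbf{Step 1: principal symbol and the diffeomorphism.} I would compute the effect of $B \mapsto (\varphi^* J^{-1/2})^{-1} B (\varphi^* J^{-1/2}) = J^{1/2} \varphi_* B \varphi^* J^{-1/2}$ on the full symbol. Since $\varphi^*$ is a Fourier integral operator lifting to the canonical transformation $(x, \xi) \mapsto (\varphi(x), \varphi'(x)^{-1} \xi)$ on $T^*\mathbb{S}^1$, the principal symbol of this conjugate at $(y, \eta)$ equals $a_0(\varphi^{-1}(y), J(y)^{-1} \eta)$ (multiplication by $J^{\pm 1/2}$ is order zero and does not affect the principal symbol). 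Using the $m$-homogeneity of $a_0$ in $\eta$, the requirement that this be $y$-independent becomes the ODE $(\varphi^{-1})'(y) = c^{1/m}|a_0(\varphi^{-1}(y), \sgn \eta)|^{-1/m}$; the periodicity condition $\varphi^{-1}(2\pi) = 2\pi$ determines $c = (2\pi)^{-m}\bigl(\int_0^{2\pi} |a_0(z,1)|^{-1/m} dz\bigr)^m$, which gives the stated formula for $b_0$. The parity $b_0(\xi) = \pm b_0(-\xi)$ is inherited from that of $a_0$ since $J > 0$.

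\textbf{Step 2: iterative construction of $L$ and the integer $p$.} After Step 1 I may assume the full symbol of $A$ has $x$-independent principal part $a_0(\xi)$. I would then build $L$ as an asymptotic product of unitary factors $\Op(1 + \ell_{-j})$ with $\ell_{-j} \in S^{-j}(T^*\mathbb{S}^1)$. By the composition formula, conjugation by such a factor modifies the symbol of order $m-j-1$ by $i^{-1}\{a_0, \ell_{-j}\} + (\text{already determined lower order terms}) = i^{-1} \partial_\xi a_0 \cdot \partial_x \ell_{-j} + \cdots$, since $a_0$ is $x$-independent. Ellipticity gives $\partial_\xi a_0 \neq 0$ on $T^*\mathbb{S}^1 \setminus 0$, so one can integrate in $x$ to make the $x$-dependent part vanish, leaving the $x$-average; the free constants of integration are fixed by single-valuedness on $\mathbb{S}^1$. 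At the top level ($j = 0$), the ambiguity is a map $\mathbb{S}^1 \to U(1)$ up to homotopy, whose winding number is precisely the integer $p$: indeed $\partial_x \ell_0 / \ell_0$ may be shifted by $ip$, which by Euler's identity $\partial_\xi a_0 = m a_0/\xi$ produces the extra term $mp\xi^{-1}b_0$ in $b_1$. Unitarity at each stage is enforced by choosing $\ell_{-j}$ purely imaginary modulo previously-determined corrections, with adjoint discrepancies absorbed into the next factor via Lemma \ref{lemma:adjoint}; Borel summation yields a single $L \in \Psi^0(\mathbb{S}^1)$ with the correct asymptotics.

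\textbf{Step 3: explicit formulas for $b_1$ and $b_2$; main obstacle.} These come from a careful expansion of $J^{1/2} \varphi_* \Op(a_0 + a_1 + \cdots) \varphi^* J^{-1/2}$ to the required order. For $b_1$: the second-order term in the Egorov/stationary-phase expansion of $\varphi_* B \varphi^*$ contributes the curvature-like correction $-\tfrac{i}{2} m^2 (\varphi^{-1})''/J^3 \cdot \xi^{-1} a_0$ to the transformed subprincipal symbol, giving $e_1$; averaging in $x$ and adding the contribution $mp\xi^{-1} b_0$ from Step 2 yields the formula for $b_1$. For $b_2$ in the case $\varphi = \mathrm{id}$, I would apply one additional conjugation $L = \Op(k_1) + \cdots$ at order zero (with $k_1$ determined in Step 2) and expand the composition to second order, invoking Lemma \ref{lemma:adjoint} to handle the self-adjointness correction and produce the term $-\tfrac{1}{2}\xi^{-2} m(m-1) a_0 \partial_x^2 k_1 / k_1$. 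The main obstacle is the bookkeeping: tracking the half-density $J^{\pm 1/2}$ contributions, the adjoint corrections, and the $L$-conjugation terms simultaneously, and verifying that all residual $x$-dependent terms at each order cancel against the chosen $\ell_{-j}$ so that only the claimed $x$-averaged expressions survive.
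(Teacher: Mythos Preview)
Your proposal is correct and follows essentially the same route as the paper: a unitary pullback by a diffeomorphism to flatten the principal symbol, followed by an iterative conjugation by elliptic order-zero operators whose symbols are determined by integrating the transport equation $H_{a_0} k = i k (a_j - b_j)$, with the periodicity constraint fixing $b_j$ as the $x$-average. The paper differs from your sketch in two technical details at the order-zero step. First, your ansatz $\Op(1 + \ell_{-j})$ is misleading at $j = 0$: the principal symbol there is $k_1(x,\xi) = \exp\bigl(i\int_0^x (b_1 - a_1)/(m\xi^{-1}a_0)\,dy\bigr)$, a genuine $U(1)$-valued symbol of possibly nonzero winding number $p$, so it is not a perturbation of $1$; the paper establishes that $\Op(k_1)$ has index zero (and hence can be made invertible by a smoothing perturbation) via a homotopy through elliptic operators to the multiplication operator $e^{ipx}$. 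Second, your plan to enforce unitarity by choosing $\ell_{-j}$ purely imaginary does not apply at this step; the paper instead uses the polar correction $K_j \mapsto K_j (K_j^* K_j)^{-1/2}$ at every stage, noting that the correction is in $\Psi^{-1}$ (resp.\ $\Psi^{-\ell}$) and does not disturb the symbols already normalised.
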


\begin{proof}
    We divide the proof into six steps.
    
    \medskip
 
    \emph{Step 1: construction of $\varphi$.} If $\varphi: \mathbb{S}^1 \to \mathbb{S}^1$ is a diffeomorphism, write $d\varphi(x) (\partial_x) = \varphi'(x) \partial_x$, where without loss of generality here we view $\varphi: [0, 2\pi) \to [0, 2\pi)$ as a function with $\varphi(0) = 0$ and $\varphi(2\pi) = 2\pi$. Then $dx(\varphi(x)) \circ d\varphi(x) = \varphi'(x) dx(x)$. Write $a_0$ for the principal symbol of $A$; then by assumption $a_0(x, \xi) = a_0(x, -\xi)$ and since $A$ is elliptic, $a_0(x, 1)$ is nowhere zero.
    
    For $x \in \mathbb{S}^1$ and $\xi \in T^*_{\varphi(x)}\mathbb{S}^1$, we have
    \[
        \sigma(\varphi_* A \varphi^*)(\varphi(x), \xi) = a_0(x, \xi \circ d\varphi(x)).
    \]
    In particular, for $\xi > 0$, using positive homogeneity, assuming that $\varphi'(x) > 0$, and equating to an $x$-independent symbol $b_0$, we get
    \[
        \xi^{m} b_0(1) = a_0(x, 1) \varphi'(x)^{m} \xi^{m}.
    \]
    The preceding equation on $\mathbb{R}$ can be solved by
    \[
        \varphi(x) := \int_0^x \left(\frac{b_0(1)}{a_0(y, 1)}\right)^{m^{-1}}\, dy,
    \]
    where we assume that $b_0(1)$ is chosen to have the same sign as $a_0(x, 1)$. In fact, the process described above can be reversed, by asking that $\varphi(2\pi) = 2\pi$ and $\varphi' > 0$; then $\varphi$ defines a diffeomorphism on $\mathbb{S}^1$.
    The former condition is satisfied when
    \[
        b_0(\xi) = (2\pi)^{m} \left(\int_0^{2\pi} |a_0|(x, \xi)^{-m^{-1}}\, dx\right)^{-m} \sgn a_0(x, 1),
    \]
    where $\sgn(\bullet)$ denotes the sign of $\bullet$. This concludes the construction for $\xi > 0$. The case $\xi < 0$ follows by symmetry using the assumption $a_0(x, 1) = a_0(x, -1)$ (or $a_0(x, 1) = -a_0(x, -1)$); it follows also that $b_0(\xi) = b_0(-\xi)$ (or $b_0(\xi) = -b_0(-\xi)$). 

    We observe that the adjoint of the operator $J^{\frac{1}{2}} \varphi_*$ is $K_0 := \varphi^* J^{-\frac{1}{2}}$. Thus $K_0$ is unitary and by construction $K_0^{-1} A K_0$ has principal symbol $b_0$ independent of $x$.
    \medskip

    \emph{Step 2: second term.} We assume that $A$ is already in the form given by Step 1, i.e. 
    \[
        A \equiv \Op(a_0) + \Op(a_1) \mod \Psi^{m - 2}(\mathbb{S}^1),
    \]
    where $a_0$ is $x$-independent and $a_1 \in S^{m - 1}(T^*\mathbb{S}^1)$ is positively homogeneous. Since $A$ is self-adjoint, we have
    \[
        \Op(a_0) + \Op(a_1) \equiv \Op(a_0)^* + \Op(a_1)^* \equiv \Op(\overline{a}_0) + \Op(\overline{a}_1) \mod \Psi^{m -2}(\mathbb{S}^1),
    \]
    where in the second congruence we applied Lemma \ref{lemma:adjoint}, as well as the fact that $a_0$ is $x$-independent. This implies that $a_0$ and $a_1$ are real-valued.
    
    We look for $K_1 \in \Psi^0(\mathbb{S}^1)$ and $B \equiv \Op(a_0) + \Op(b_1) \mod \Psi^{m - 2}(\mathbb{S}^1)$, where $b_1 \in S^{m - 1}(T^*\mathbb{S}^1)$ is $x$-independent, such that
    \[
        0 = \sigma(K_1B - AK_1) = \sigma([K_1, A] + K_1(B - A)) = i H_{a_0} k_1 + k_1 (b_1 - a_1), 
    \]
    where $H_{\bullet}$ denotes the Hamiltonian vector field of $\bullet$, and $k_1$ denotes the principal symbol of $K_1$. Note that by definition and since $a_0$ is $x$-independent, 
    \[
        H_{a_0}(x, \xi) = \frac{\partial a_0(\xi)}{\partial \xi} \partial_x = m \xi^{-1} a_0(\xi) \partial_x, \quad \xi \neq 0.
    \]
    Therefore assuming $k \neq 0$
    \[
        \frac{\partial_x k_1}{k_1} = i \frac{b_1(\xi) - a_1(x, \xi)}{m \xi^{-1} a_0(\xi)},
    \]
    which admits a solution
    \begin{equation}\label{eq:k1}
        k_1(x, \xi) = \exp \left(i \int_0^x \frac{b_1(\xi) - a_1(y, \xi)}{m \xi^{-1} a_0(\xi)}\, dy \right),
    \end{equation}
    under the periodicity assumption $k_1(0, \xi) = k_1(2\pi, \xi)$, i.e. 
    \[
        b_1(\xi) = m \xi^{-1} a_0(\xi) p + \frac{1}{2\pi} \int_0^{2\pi} a_1(x, \xi)\, dx, \quad p \in \mathbb{Z}.
    \]
    We observe here that $b_1$ is real-valued, since $a_0$ and $a_1$ are real-valued. We claim that $K_1$ can be perturbed through smoothing operators into an invertible operator. By perturbation theory of elliptic operators, it suffices to show that $K_1$ has index zero. For the latter property, we notice that $K_1$ is obtained as a deformation through an elliptic family of operators
    \[
        K^t := \Op(k_t), \quad k_t(x, \xi) = \exp \left(ixp + it\left(\frac{x}{2\pi} \int_0^{2\pi} \frac{a_1(y, \xi)}{m \xi^{-1} a_0(\xi)}\, dy - \int_0^x \frac{a_1(y, \xi)}{m \xi^{-1} a_0(\xi)}\,dy\right) \right),
    \]
    for $t \in [0, 1]$, where $K^1 = K_1$. By the homotopy invariance of index, and the fact that $K^0$ is the multiplication operator by the function $x \mapsto e^{ixp}$, which is clearly of zero index, the conclusion follows.

    Next, since $k_1$ has unit norm, we notice that $K_1^* K_1 = \mathrm{id} + R_1$, where $R_1 \in \Psi^{-1}(\mathbb{S}^1)$ is self-adjoint. Since $K_1^* K_1$ is elliptic, positive, and bounded away from zero, we may set $Q_1 := (\mathrm{id} + R_1)^{-\frac{1}{2}} \in \Psi^0(\mathbb{S}^1)$. We may thus replace $K_1$ with $K_1 Q_1$, and note that by construction it has principal symbol equal to $k_1$, so $K_1^{-1} A K_1$ has the required form.

\medskip

\emph{Step 3: lower order terms.} Let $\ell \geq 2$, and assume according to Steps 1 and 2 above that $A \in \Psi^m(\mathbb{S}^1)$ is elliptic and self-adjoint, and takes the form
\begin{align*}
    A &\equiv \Op(a_0) + \Op(a_1) + \dotsb + \Op(a_{\ell - 1}) + \Op(a_\ell) \mod \Psi^{m - \ell - 1}(\mathbb{S}^1),
\end{align*}
where for $i = 0, \dotsc, \ell - 1$, $a_i \in S^{m - i}(T^*\mathbb{S}^1)$ are real-valued and $x$-independent, and $a_\ell \in S^{m - \ell}(T^*\mathbb{S}^1)$ is positively homogeneous. Since $A$ is self-adjoint, using Lemma \ref{lemma:adjoint}, as well as the fact that $a_0, \dotsc, a_{\ell - 1}$ are $x$-independent, we get that $a_\ell$ is real-valued.

We look for $B \in \Psi^m(\mathbb{S}^1)$ and $K_\ell \in \Psi^0(\mathbb{S}^1)$ of the form 
\begin{align*}
    B &\equiv \Op(a_0) + \Op(a_1) + \dotsb + \Op(a_{\ell - 1}) + \Op(b_\ell) \mod \Psi^{m - \ell - 1}(\mathbb{S}^1),\\
    K_{\ell} &\equiv \mathrm{id} + \Op(c_\ell) \mod \Psi^{-\ell}(\mathbb{S}^1),
\end{align*}
where $b_\ell$ is $x$-independent, and $c_\ell \in S^{1 - \ell}(T^*\mathbb{S}^1)$. Then $K_\ell B - AK_\ell \in \Psi^{m - \ell}(\mathbb{S}^1)$ with principal symbol
\[
    \sigma(K_\ell B - AK_\ell) = \sigma(B - A + [\Op(c), A] + \Op(c) (B - A)) = b_\ell - a_\ell + iH_{a_0} c_\ell.
\]
We equate this to zero to obtain
\[
    m \xi^{-1} a_0(\xi) \partial_x c_\ell = i (b_\ell - a_\ell)(x, \xi), \quad \xi \neq 0, x \in \mathbb{S}^1,
\]
which in turn has a solution
\[
    c_\ell(x, \xi) = \frac{i}{m \xi^{-1} a_0(x, \xi)} \int_0^x  (b_\ell(\xi) - a_\ell(y, \xi))\, dy,
\]
under the periodicity assumption $c_\ell(0, \xi) = c_\ell(2\pi, \xi)$, that is
\[
    b_\ell(\xi) = \frac{1}{2\pi} \int_0^{2\pi} a_\ell(x, \xi)\, dx.
\]
Since $a_\ell$ is real-valued, so is $a_\ell$. Using that $\ell \geq 2$ and $c_\ell \in S^{1 - \ell}(T^*\mathbb{S}^1)$, $\Op(c_\ell)$ is compact on $L^2(\mathbb{S}^1)$, and so $K_\ell$ has index zero. By perturbing with smoothing operators, if needed, it is thus possible to achieve that $K_\ell$ is invertible. By construction we may write $K_\ell^* K_\ell = \mathrm{id} + R_\ell$, where since the symbol $c_\ell$ is purely imaginary, $R_\ell \in \Psi^{-\ell}(\mathbb{S}^1)$, and we have $R_\ell$ is self-adjoint. Then similarly to the end of Step 2, we may replace $K_\ell$ with $K_\ell(\mathrm{id} + R_\ell)^{-\frac{1}{2}}$ to prove the unitary claim.
\medskip

\emph{Step 4: construction of $K$.} We would like to make sense of the infinite product
\[
    P := \prod_{i = 1}^\infty K_i = K_1 K_2 \cdots. 
\]
By construction in Step 3, for $i \geq 2$, $K_i \equiv \mathrm{id} + \Op(c_i) \mod \Psi^{ - i}(\mathbb{S}^1)$, and thus the expansions of partial products
\[
    P_j := \prod_{i = 2}^j K_i
\]
have fixed beginnings for large enough $j$. More precisely, for each $N > 0$, there exists $j_0$ such that for $j \geq j_0$, $P_j - P_{j_0} \in \Psi^{-N}(\mathbb{S}^1)$. By Borel summation, the limit $P := \lim_{j \to \infty} P_j \in \Psi^0(\mathbb{S}^1)$ is thus well-defined modulo smoothing. Since $P_j$ are unitary by construction, so is $P$ modulo smoothing, i.e. $P^*P - \mathrm{id} \in \Psi^{-\infty}(\mathbb{S}^1)$. Moreover, by using the same arguments as in Step 3, we may perturb $P$ by smoothing operators to make sure that $P$ is invertible, and also unitary. We set $K := K_0 K_1 P$ and $L:= K_1 P$. It is now straightforward to check that $K$ conjugates $A$ to the required form. 
\medskip

\emph{Step 5: computation of $b_0$ and $b_1$.} The coefficient $b_0$ is computed in Step 1; it is left to compute $b_1$. By Step 2, it suffices to prove the claimed formula for $e_1$, that is, the second symbol in the expansion of $E := J^{\frac{1}{2}}\varphi_* A \varphi^* J^{-\frac{1}{2}}$, where $F = \varphi_* A \varphi^*$ (with full symbol $f$). Using \cite[Equation (3.5)]{Grigis-Sjoestrand-94}, we get
\[
    f \equiv \sum_{k = 0}^\infty \frac{(-i)^k}{k!} \partial_\xi^k \partial_y^k \left(a(\varphi^{-1}x, G(x, y)^{-1}\xi) \frac{J(y)}{G(x, y)}\right)\Big|_{y = x},
\]
where $G(x, y) = \frac{\varphi^{-1}(x) - \varphi^{-1}(y)}{x - y}$ for $x \neq y$ and $G(x, x) = J(x)$; here we identify $\mathbb{S}^1$ with $[0, 2\pi)$. Note that $G > 0$ everywhere since $\varphi$ is a diffeomorphism. Then we read off that
\[
    f_0 = a_0(\varphi^{-1}x, J^{-1}(x) \xi),
\]
and also for $\xi \neq 0$
\begin{align*}
    &f_1 - a_1(\varphi^{-1}x, J^{-1}(x) \xi) = - i\partial_{\xi} \partial_y \left(a_0 (\varphi^{-1}x, G(x, y)^{-1} \xi) \frac{J(y)}{G(x, y)}\right)\Big|_{y = x}\\
    &= - i\partial_{\xi}\left( -m a_0 (\varphi^{-1}x, G(x, y)^{-1} \xi) \frac{\partial_y G}{G^2} + a_0(\varphi^{-1}x, G^{-1}(x, y)\xi) \frac{G \partial_y J  - J \partial_y G}{G^2}\right)\Big|_{y = x}\\
    &= +im^2 \xi^{-1} a_0(\varphi^{-1}x, J^{-1}(x) \xi) \frac{(\varphi^{-1})''(x)}{2 J^3(x)} - im \xi^{-1} a_0(\varphi^{-1}x, J^{-1}(x) \xi) \frac{(\varphi^{-1})''(x)}{2 J^2(x)},
\end{align*}
where in last two lines we used the chain rule and $\partial_{\xi} a_0 = m\xi^{-1} a_0$, and in the last line we used that
\[
    \partial_y G(x, y) = \frac{-J(y) (x -y) + (\varphi^{-1}(x) - \varphi^{-1}(y))}{(x - y)^2}, \quad x \neq y,\quad \partial_y G(x, x) = \frac{1}{2} (\varphi^{-1})''(x).
\]
By the formula for the symbol of the product (see \cite[Theorem 3.6]{Grigis-Sjoestrand-94}) we get that $e_0 = f_0$ and
\begin{align*}
    e_1(x,\xi) &= J^{\frac{1}{2}}(f_1 J^{-\frac{1}{2}} - i\partial_{\xi} f_0 \partial_x J^{-\frac{1}{2}})\\
    &= f_1 + \frac{i}{2} m \xi^{-1} a_0(\varphi^{-1}x, J^{-1}(x) \xi) \frac{(\varphi^{-1})''(x)}{J^2(x)}\\
    &= a_1(\varphi^{-1}x, J^{-1}(x) \xi) + +im^2 \xi^{-1} a_0(\varphi^{-1}x, J^{-1}(x) \xi) \frac{(\varphi^{-1})''(x)}{2 J^3(x)},
\end{align*}
as claimed.
\medskip

\emph{Step 6: computation of $b_2$.} We next compute $b_2$ in the case where $\varphi = \mathrm{id}$, i.e. $a_0$ is $x$-independent. For this, we revisit Step 2 and in particular we use its notation. We set $C := K_1 B - A K_1$, where $B = \Op(a_0 + b_1)$, $K_1 \equiv \Op(k_1)$ modulo smoothing (so it is invertible, but not unitary at this stage), and $C \in \Psi^{m - 2}(\mathbb{S}^1)$ with principal symbol $c$. We start by computing $c$. We have $c$ equal to the top term in the symbol expansion
\begin{align*}
    &\sum_{k \geq 0} \frac{(-i)^k}{k!} (\partial_{\xi}^k k_1 \partial_x^k b - \partial_{\xi}^k a \partial_x^k k_1)\\
    &\equiv k_1 (a_0 + b_1) - (a_0 + a_1 + a_2) k_1 + i \xi^{-1}(ma_0 + (m - 1)a_1) \partial_x k_1\\ 
    &+ \frac{1}{2} \xi^{-2} m(m - 1)a_0 \partial_x^2 k_1 \mod S^{m - 3}(T^*\mathbb{S}^1),
\end{align*}
where in the equivalence we used that $\partial_{\xi} k_0 = 0$ by $0$-homogeneity, and homogeneity of the symbol $a_0$, $a_1$, and $a_2$. By construction in Step 2 the leading two terms are zero so we get
\[
    c = - a_2 k_1 + i \xi^{-1} (m - 1) a_1 \partial_x k_1 + \frac{1}{2} \xi^{-2} m(m - 1) a_0 \partial_x^2 k_1.
\]
Using $B - K_1^{-1} A K_1 = K_1^{-1} C$ we therefore get 
\[
    K_1^{-1} A K_1 \equiv \Op(b_0 + b_1 - k_1^{-1}c) \mod \Psi^{m - 3}(\mathbb{S}^1).
\]
By Taylor expansion we compute
\[
    Q_1 = (\mathrm{id} + R_1)^{-\frac{1}{2}} \equiv \mathrm{id} - \frac{1}{2} R_1 + \frac{3}{8} R_1^2, \quad Q_1^{-1} = (\mathrm{id} + R_1)^{\frac{1}{2}} \equiv \mathrm{id} + \frac{1}{2} R_1 - \frac{1}{8} R_1^2 \mod \Psi^{-3}(\mathbb{S}^1). 
\]
We therefore have (note that $a_0 = b_0$ by assumption)
\begin{align*}
    &Q_1^{-1} K_1^{-1} A K_1 Q_1\\ 
    &\equiv \left(\mathrm{id} + \frac{1}{2} R_1 - \frac{1}{8} R_1^2\right) \Op(a_0 + b_1 - k_1^{-1}c) \left(\mathrm{id} - \frac{1}{2} R_1 + \frac{3}{8} R_1^2\right) \mod \Psi^{m - 3}(\mathbb{S}^1)\\
    &\equiv \Op(a_0 + b_1 - k_1^{-1}c) + \frac{1}{2} [R_1, \Op(a_0)] - \frac{1}{4} R_1 \Op(a_0) R_1 - \frac{1}{8} R_1^2 \Op(a_0)\\
    &\hspace{9cm}+ \frac{3}{8} \Op(a_0) R_1^2 \mod \Psi^{m - 3}(\mathbb{S}^1)\\
    &\equiv \Op(a_0 + b_1 - k_1^{-1}c) + \Op\left(\frac{i}{2} H_{a_0} r_1 - \frac{1}{4} r_1^2 a_0 - \frac{1}{8} r_1^2 a_0 + \frac{3}{8} r_1^2 a_0\right) \mod \Psi^{m - 3}(\mathbb{S}^1)\\
    &\equiv \Op\left(a_0 + b_1 - k_1^{-1}c + \frac{i}{2}m \xi^{-1} a_0 \partial_x r_1\right) \mod \Psi^{m - 3}(\mathbb{S}^1),
\end{align*}
where in the first two congruences we simply do not write the lower order terms, and in the third congruence we compute the principal symbols, and wrote $r_1$ for the principal symbol of $R_1$. Since the term $\frac{i}{2}m \xi^{-1} a_0 \partial_x r_1$ has a primitive, it disappears in the formula for $b_2(\xi)$, which concludes the proof.
\end{proof}

\begin{rem}\rm
    In the case of general $A$, i.e. without any symmetry assumption on the principal symbol of $A$, it is possible to construct a Fourier integral operator $\Phi_S: C^\infty(\mathbb{S}^1) \to C^\infty(\mathbb{S}^1)$ associated to a generating function $S \in C^\infty(T^*\mathbb{S}^1)$ positively homogeneous of order $1$, such that $\Phi_S^{-1} A \Phi_S$ has an $x$-independent principal symbol \cite{Rozenbljum-78}. For simplicity, and since it suffices for our purposes, we decided to work with the symmetry assumption. 
\end{rem}

\subsection{Spectral asymptotics for pseudodifferential operators on the circle} \label{ssec:spectral-asymptotics-general}
Recall that a self-adjoint operator $A$ is called semi-positive if there exists $C \in \mathbb{R}$ such that $A + C \geq 0$; also, if $A \in \Psi^m(\mathbb{S}^1)$ is elliptic, $m \geq 0$, and has positive principal symbol, then by G\aa rding's inequality $A$ is semi-positive. (The elliptic self-adjoint differential operator $-i\partial_x$ on $\mathbb{S}^1$ \emph{does not} satisfy this condition, and clearly has eigenvalues going to both $\pm \infty$ so it is not semi-positive.)

Now spectral theory and the fact that we may write $K^{-1}AK = B + R$, where $R$ is smoothing and $B$ has a symbol independent of $x$, implies the following remarkably sharp spectral asymptotics.

\begin{theorem}[Rozenblyum \cite{Rozenbljum-78}]\label{thm:sharpasymptotics}
    Let $A$, $(b_k)_{k \geq 0}$, and $b$ be as in Theorem \ref{thm:normal-form}, and assume that $m > 0$. Then, there exists an enumeration $(\lambda_n)_{n \in \mathbb{Z}}$ of eigenvalues of $A$, such that
    \begin{align*}
        \lambda_n &= \sum_{k = 0}^\infty n^{m - k} b_k(1), \quad n \to \infty,\\
        \lambda_n &= \sum_{k = 0}^\infty |n|^{m - k} b_k(-1), \quad n \to -\infty,
    \end{align*}
    in the sense that for each $k_0 \in \mathbb{Z}_{\geq 0}$, we have 
    \begin{align*}
        \lambda_n - \sum_{k = 0}^{k_0} n^{m - k} b_k(1) &= \mathcal{O}(n^{m - k_0 -1}), \quad n \to \infty,\\
        \lambda_n - \sum_{k = 0}^{k_0} |n|^{m - k} b_k(-1) &= \mathcal{O}(|n|^{m - k_0 -1}), \quad n \to -\infty.
    \end{align*}
\end{theorem}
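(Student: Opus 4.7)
The plan is to apply the normal form of Theorem \ref{thm:normal-form} to reduce $A$ to a Fourier multiplier modulo smoothing, and then propagate the eigenvalue asymptotics through a perturbation argument controlled by the spectral gap.

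First, I would invoke Theorem \ref{thm:normal-form} to choose a unitary operator $K$ with $K^{-1}AK = \Op(b) + R$, where $R \in \Psi^{-\infty}(\mathbb{S}^1)$ and $b$ is an $x$-independent classical symbol with expansion $b \sim \sum_{k\geq 0} b_k$, each $b_k$ real-valued and positively homogeneous of degree $m - k$. Since unitary conjugation preserves the spectrum, the problem reduces to analysing the spectrum of $\Op(b) + R$.

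Next I would identify $\Op(b)$, up to smoothing, with the natural Fourier multiplier $b(D)$ on $\mathbb{S}^1$ defined by $b(D) e^{inx} = b(n) e^{inx}$ for $n \neq 0$ (with $b$ extended arbitrarily smoothly across $\xi = 0$, which does not affect the high-frequency spectrum). Both operators lie in $\Psi^m(\mathbb{S}^1)$ and share the $x$-independent full symbol $b$, so by the well-definedness of the full symbol modulo smoothing in Lemma \ref{lemma:unique-quantisation}, they differ by an operator in $\Psi^{-\infty}(\mathbb{S}^1)$. Hence $\Op(b) + R = b(D) + R'$ for some $R' \in \Psi^{-\infty}(\mathbb{S}^1)$, and the unperturbed eigenvalues are exactly $\{b(n) : n \in \mathbb{Z}\}$ realised on the Fourier basis $e_n := (2\pi)^{-1/2} e^{inx}$.

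The heart of the argument is an approximate-eigenvalue estimate. Applying $b(D) + R'$ to $e_n$ yields $(b(D) + R') e_n = b(n) e_n + R' e_n$, and because $R'$ is smoothing I would use the bound $\|R' e_n\|_{L^2} \leq C_N |n|^{-N}$ for every $N \geq 0$ (which follows from $\|e_n\|_{H^{-N}} \sim |n|^{-N}$ together with the $H^{-N}\to L^2$ boundedness of $R'$). The Weyl criterion for the self-adjoint operator $b(D) + R'$ then places an eigenvalue within distance $C_N |n|^{-N}$ of each $b(n)$. Ellipticity of $A$ forces $b_0(\pm 1) \neq 0$, yielding the spectral gap $|b(n+1) - b(n)| \sim m|b_0(\sgn n)| |n|^{m-1}$ as $|n| \to \infty$, which for large $|n|$ dwarfs the $\mc{O}(|n|^{-\infty})$ perturbation; combined with the matching Weyl counting functions of $b(D)$ and $b(D) + R'$ (both lying in $\Psi^m(\mathbb{S}^1)$), this yields a unique enumeration $(\lambda_n)_{n \in \mathbb{Z}}$ with $\lambda_n = b(n) + \mc{O}(|n|^{-\infty})$ for $|n|$ large, finitely many small-$|n|$ eigenvalues being labelled arbitrarily. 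Expanding $b(n) = \sum_{k=0}^{k_0} b_k(\sgn n) |n|^{m-k} + \mc{O}(|n|^{m-k_0-1})$ via the positive homogeneity of each $b_k$ and Borel summation then concludes the argument.

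The main obstacle is the bijective matching of eigenvalues in the perturbation step: since $A$ need not be bounded below, the positive-$n$ and negative-$n$ branches of the multiplier spectrum may tend to different infinities, and one must ensure that the $\mc{O}(|n|^{-\infty})$ pairing of unperturbed and perturbed eigenvalues is simultaneously injective and surjective. This is handled by exploiting the sharp spectral gap together with the fact that $b(D)$ and $b(D) + R'$ have identical Weyl asymptotics on $(-\infty, \lambda]$ and $[\lambda, \infty)$ as $|\lambda| \to \infty$, ruling out both missing and duplicated eigenvalues on each branch.
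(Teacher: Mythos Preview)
Your overall architecture matches the paper's: apply Theorem \ref{thm:normal-form}, identify $\Op(b)$ with the Fourier multiplier $b(D)$ modulo smoothing (the paper carries this out explicitly as Steps 1--2 via oscillatory testing, arriving at exactly your $R'$), and then compare the spectra of $b(D)$ and $b(D)+R'$. The divergence is in the perturbation step. The paper does \emph{not} use the Weyl criterion or a spectral--gap argument; instead it orders the eigenvalues as $\mu_0 \le \mu_1 \le \dotsb$ and invokes the variational principle (via \cite[Lemma 2.1]{Girouard-Parnovski-Polterovich-Sher-14}) to get $\mu_n(b(D)+R') = \mu_n(b(D)) + \mathcal O(n^{-\infty})$ directly. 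It then re-indexes by a bijection $f:\mathbb Z_{\ge 0}\to\mathbb Z$ satisfying $\mu_n(b(D))=b(f(n))$ and checks $C_1 n\le |f(n)|\le C_2 n$. This sidesteps any gap hypothesis entirely. The paper also organises the argument by cases on the sign of $\sigma_A$: positive (semi-positive, treated as above), negative (apply to $-A$), and sign-changing (apply to $A^2$); your sketch treats all cases uniformly.

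Your route is viable, but the bijection step as written has a soft spot. The gap $|b(n+1)-b(n)|\sim m|b_0(\sgn n)||n|^{m-1}$ is only the \emph{intra-branch} gap. In the symmetric case $b_0(1)=b_0(-1)$ the two branches interleave, and the relevant distance $|b(n)-b(-n')|$ can be of order $|n|^{m-k}$ for arbitrarily large $k$ (indeed $\mathcal O(|n|^{-\infty})$ if all $b_k(1)=b_k(-1)$), so ``gap dwarfs perturbation'' does not by itself force a one-to-one matching. Your appeal to ``matching Weyl counting functions'' would repair this, but only if you mean agreement of the counting functions up to $O(1)$, not merely leading-order Weyl asymptotics; that sharper statement does hold because $R'$ is smoothing (hence trace class), but it needs to be stated and used explicitly. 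Alternatively, once you know the counting functions agree to $O(1)$, you are essentially back to the min-max comparison the paper uses, and the gap argument becomes superfluous.
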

\begin{proof}
    We divide the proof into three steps.
    \medskip

    \emph{Step 1.} We claim that for an arbitrary symbol $c \in S^m(T^*\mathbb{S}^1)$, and any $n \in \mathbb{Z}$ we have
    \[
        e^{-inx} \Op(c) e^{in\bullet} (x) = c(x, n) + r(x, n),
    \]
    where 
    \begin{equation}\label{eq:reference}
        \partial_x^k r(x, n) = \mathcal{O}(n^{-\infty}), \quad n \to \infty, \quad k \in \mathbb{Z}_{\geq 0}.
    \end{equation}
    For $i = 1, 2$, let $\overline{\psi}_i$ be a cut-off function with support slightly larger than $\supp \psi_i$ and such that $\overline{\psi}_i = 1$ on $\supp \psi_i$ (we recall $\psi_i$ was defined in the quantization formula \eqref{eq:quantization-formula-standard}). Indeed, we compute for each $N \in \mathbb{Z}_{\geq 0}$ that
    \begin{align*}
        &e^{-inx} \Op(c) e^{in\bullet} (x) = \sum_{j = 1}^2 \psi_j(x)
        e^{-inx} ((\varphi_{j*}\overline{\psi}_j) \Op^{\mathbb{R}}(\widetilde{\varphi}_{j*} c) (\varphi_{j*} \chi_j) e^{in \bullet})(\varphi_j x)\\
        &= \sum_{j = 1}^2 \left(r_{j, N} (x, n) + \sum_{k = 0}^N \frac{(-i)^k}{k!} \psi_j(x) \partial_{\xi}^k c(x, n) \partial_x^k \chi_j \right)\\
        &= c(x, n) + r(x, n),
    \end{align*}
    where in the second line we used oscillatory testing (see Theorem \cite[Theorem 3.4]{Grigis-Sjoestrand-94}), and for $j = 1, 2$, we introduced $r_{j, N}(x, n)$ which satisfies that $\partial_x^k r_{j, N}(x, n) = \mathcal{O}(n^{-N})$ as $n \to \infty$, for every $k \in \mathbb{Z}_{\geq 0}$. In the last line we used the properties of $\psi_j$ and $\chi_j$, and wrote $r(x, n) = r_{1, N}(x, n) + r_{2, N}(x, n)$ (which is independent of $N$).
    \medskip

    \emph{Step 2.} We claim that for $c \in S^m(T^*\mathbb{S}^1)$ independent of $x$, there exists a smoothing operator $R$, such that for $n \in \mathbb{Z}$
    \[
        (\Op(c) - R)e^{in \bullet}(x) = c(n) e^{inx}. 
    \]

    By Step 1, we have $\Op(c) e^{in\bullet} (x) = e^{in x} (c(n) + r(x, n))$ for $r$ satisfying \eqref{eq:reference}. We claim that the operator $R$ defined by $r(x, n)$ (using Fourier series) is smoothing. Indeed, its kernel is given by
    \[
        K_R(x, y) = (2\pi)^{-1}\sum_{n \in \mathbb{Z}} e^{in(x - y)} r(x, n),
    \]
    which converges and is smooth in $(x, y)$ thanks to \eqref{eq:reference}, proving the claim.
    \medskip

    \emph{Step 3.} Assume first that the principal symbol of $A$ is positive, implying that $A$ is semi-positive. For a semi-positive operator $\bullet$ write $(\mu_n(\bullet))_{n \geq 0}$ for the enumeration of its eigenvalues in the non-decreasing order. By Theorem \ref{thm:normal-form}, and Steps 1 and 2, we can write
    \[
        K^{-1} A K = B + R, \quad R \in \Psi^{-\infty}(\mathbb{S}^1),
    \]
    where $K$ is unitary, $B \in \Psi^m(\mathbb{S}^1)$ is defined by $B e^{in \bullet} (x) = b(n) e^{inx}$ for $n \in \mathbb{Z}$; $B$ has full symbol $b$. It is straightforward to check that $B$, and hence also $R$, are self-adjoint. Therefore
    \begin{equation}\label{eq:+smoothing}
        \mu_n(A) = \mu_n(B + R) = \mu_n(B) + \mathcal{O}(n^{-\infty}), \quad n \to \infty,
    \end{equation}
    where the last equality follows from the variational principle, see e.g. \cite[Lemma 2.1]{Girouard-Parnovski-Polterovich-Sher-14}. 
    
    Let $f: \mathbb{Z}_{\geq 0} \to \mathbb{Z}$ be a bijection defined by the property $\mu_n(B) = b(f(n))$. We claim that for some $C_1, C_2 > 0$, and $n_0 \in \mathbb{Z}_{\geq 0}$
    \begin{equation}\label{eq:asymptotics-bijection-f}
        C_1n \leq |f(n)| \leq C_2n, \quad n \in \mathbb{Z}_{\geq n_0}.
    \end{equation}
    Indeed, thanks to the asymptotics of $b$ we first observe that $n \mapsto b(n)$ and $n \mapsto b(-n)$ are increasing for $n$ large enough. Thus, again for $n$ large enough, $f(\{0, \dotsc, n\}) = \{-n_1, -n_1 + 1, \dotsc, n_2 - 1, n_2\}$ for some $n_1 = n_1(n), n_2 = n_2(n) \in \mathbb{Z}_{> 0}$ such that $n_1 + n_2 = n$. It follows that we may take $C_2 = 1$ in the claim. In fact, using the asymptotics of $b$, we see that for $i = 1, 2$, there are $c_i > 0$ such that $n_i(n) \geq c_i n$. Therefore the claim holds with $C_1 = \min(c_1, c_2)$.

    Finally, we can enumerate eigenvalues of $A$ as $\lambda_n(A) := \mu_{f^{-1}(n)}(B + R)$, where $n \in \mathbb{Z}$. Then
    \[
        \lambda_n(A) = \mu_{f^{-1}(n)}(B) + \mathcal{O}((f^{-1}(n))^{-\infty}) = b(n) + \mathcal{O}(|n|^{-\infty}), \quad |n| \to \infty,
    \]
    where in the first equality we used \eqref{eq:+smoothing}, and in the second one we used \eqref{eq:asymptotics-bijection-f}.

    This completes the proof in the case where the principal symbol $a_0$ is positive. If $a_0$ is negative, then the claim follows from considering $-A$, while if $a_0$ is both positive and negative, then we may apply the preceding to $A^2$ instead and the enumeration of eigenvalues of $A$ immediately follows. This completes the proof.
\end{proof}

\subsection{Asymptotic expansion of the Dirichlet-to-Neumann map}\label{ssec:symbol-expansion}
Here we recall the asymptotics of the full symbol of Dirichlet-to-Neumann map (proved in \cite{Cekic-20}), and explicitly compute first few terms in the special case of surfaces. 

\smallskip

We consider the boundary normal coordinates given by $(x_1, x_2)$ near a fixed boundary component $N$ of $\partial M$, where $x_2$ measures the normal distance to $N$. In other words, the metric $g$ in these coordinates takes the form
\[
    g = g_{11} dx_1^2 + dx_2^2.
\]
We assume for simplicity that $E = M \times \mathbb{C}$. Near $N$, we write $A = A_1 dx_1 + A_2 dx_2$. Using \cite[Lemma 2.3]{Cekic-20}, we may assume that $A_2 = 0$; more generally, we assume that $A_2 = 0$ simultaneously in coordinates $(x_1, x_2)$ as above for \emph{any} boundary component $N$. Further, by \cite[Proposition 3.3]{Cekic-20} we know that $\Lambda_{g, A, q} \in \Psi^1(\partial M)$ and we assume the full symbol of $\Lambda_{g, A, Q}$ on $N$ has the expansion
\[
    a(x, \xi) \sim \sum_{k = 0}^\infty a_{k}(x, \xi), \quad a_k \in S^{1 - k}(T^*N).
\]

\begin{prop}\label{prop:computation}
    In the coordinate system as above, near a fixed boundary component $N$, the following formulas hold:
    \begin{align*}
        a_0(x, \xi) &= \sqrt{g^{11}} |\xi_1|,\\
        a_1(x, \xi) &= -i \sqrt{g^{11}} A_1 \frac{\xi_1}{|\xi_1|},\\
        a_2(x, \xi) &= -\frac{i}{2} \partial_{x_2} A_1 \xi_1^{-1} + \frac{1}{2} (g^{11})^{-\frac{1}{2}} q |\xi_1|^{-1}. 
    \end{align*}
\end{prop}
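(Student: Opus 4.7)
The plan is to apply the pseudodifferential factorization of $\Lapl_{g, A, q}$ from \cite[Proposition 3.3]{Cekic-20} and to solve recursively for the first three coefficients of the symbol of $\Lambda_{g, A, q}$.

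First, I would rewrite the magnetic Laplacian in boundary normal coordinates: with $g_{22} = 1$, $g_{12} = 0$, and the gauge $A_2 = 0$, formula \eqref{eq:lapl-formula} reduces to
\[
\Lapl_{g, A, q} = -\partial_{x_2}^2 - \bigl(\partial_{x_2} \log \sqrt{g_{11}}\bigr) \partial_{x_2} + L_0,
\]
where $L_0 = L_0(x_1, x_2, \partial_{x_1})$ is a second order tangential differential operator whose full symbol can be read off by expanding the covariant derivatives in $x_1$. Then I would invoke the factorization of \cite[Proposition 3.3]{Cekic-20}: there exist classical pseudodifferential operators $A_\pm$ of order $1$ in the tangential direction, smoothly depending on $x_2 \in [0, \varepsilon)$, with principal symbols $\pm\sqrt{g^{11}}|\xi_1|$, such that
\[
\Lapl_{g, A, q} \equiv -(\partial_{x_2} + A_-)(\partial_{x_2} + A_+) \mod \Psi^{-\infty},
\]
and $\Lambda_{g, A, q} \equiv A_+|_{x_2 = 0}$ modulo smoothing. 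Expanding the product then yields the two symbolic identities
\[
A_+ + A_- \equiv \partial_{x_2}\log\sqrt{g_{11}}, \qquad \partial_{x_2} A_+ + A_- \# A_+ \equiv -L_0,
\]
both modulo $\Psi^{-\infty}$.

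Next, writing $A_\pm \sim \sum_{k \geq 0} \Op(a_\pm^{(k)})$ with $a_\pm^{(k)}$ positively homogeneous of degree $1 - k$, I would match symbols order by order, using the one-dimensional Moyal expansion
\[
\sigma(A_- \# A_+) \sim \sum_{k = 0}^\infty \frac{(-i)^k}{k!}\, \partial_{\xi_1}^k a_- \cdot \partial_{x_1}^k a_+.
\]
At order $\xi_1^2$, the equation $(a_+^{(0)})^2 = g^{11}\xi_1^2$ forces the positive root $a_+^{(0)} = \sqrt{g^{11}}|\xi_1|$ (hence $a_0$), and $a_-^{(0)} = -a_+^{(0)}$. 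At order $\xi_1^1$: substituting $a_+^{(0)}$, using $a_+^{(1)} + a_-^{(1)} = \partial_{x_2}\log\sqrt{g_{11}}$, and performing the cancellations between the normal-derivative contribution $\partial_{x_2} a_+^{(0)}$ and the corresponding term of $a_-^{(1)} a_+^{(0)}$, as well as between the Moyal correction $-i\partial_{\xi_1} a_-^{(0)} \partial_{x_1} a_+^{(0)}$ and the $\partial_{x_1}\sqrt{g^{11}}$ part of $\sigma(L_0)$, I expect to be left with an algebraic equation of the form $-2\sqrt{g^{11}}|\xi_1|\, a_+^{(1)} = 2i\xi_1 g^{11}A_1$, giving $a_+^{(1)} = -i\sqrt{g^{11}}A_1\,\xi_1/|\xi_1|$ (hence $a_1$). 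At order $\xi_1^0$: since $a_-^{(1)}$ is degree-zero homogeneous and $a_-^{(0)}$ is piecewise linear in $\xi_1$, both $\partial_{\xi_1} a_-^{(1)} = 0$ and $\partial_{\xi_1}^2 a_-^{(0)} = 0$ on $\{\xi_1 \neq 0\}$, so the only surviving subleading Moyal contribution is $-i\partial_{\xi_1} a_-^{(0)}\partial_{x_1} a_+^{(1)}$. Substituting $a_+^{(0)}, a_+^{(1)}$, computing $\partial_{x_2} a_+^{(1)}$, and carrying out the cancellations should yield $a_+^{(2)} = -\tfrac{i}{2}\partial_{x_2}A_1\,\xi_1^{-1} + \tfrac{1}{2}(g^{11})^{-1/2}q\,|\xi_1|^{-1}$ (hence $a_2$), after restriction to $x_2 = 0$.

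The hard part is the order $\xi_1^0$ step: one must accurately identify which Moyal corrections contribute (only one survives, thanks to $\dim N = 1$), correctly compute $\partial_{x_2} a_+^{(1)}$ using $\partial_{x_2}\gamma^{-1} = -\gamma^{-1}\partial_{x_2}\log\gamma$ with $\gamma = \sqrt{g_{11}}$, and track the signs through several cancellations between the normal-derivative terms involving $\partial_{x_2}\log\sqrt{g_{11}}$, the tangential corrections $\gamma^{-2}A_1^2$, $\gamma^{-2}\partial_{x_1}A_1$, $\gamma^{-1}(\partial_{x_1}\gamma^{-1})A_1$ generated by $-(a_+^{(1)})^2$ and the Moyal correction, and the corresponding order-$0$ terms of $\sigma(L_0)$. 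These cancellations rely specifically on the boundary normal gauge ($A_2 = 0$), the one-dimensional cotangent fiber of $N$, and the identity $a_+^{(1)} + a_-^{(1)} = \partial_{x_2}\log\sqrt{g_{11}}$; once they are verified, evaluation at $x_2 = 0$ yields the three formulas stated in the proposition.
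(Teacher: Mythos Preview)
Your proposal is correct and follows essentially the same approach as the paper: the paper's factorization $\mc{L}_{g,A,q}\equiv(-i\partial_{x_2}+iE-iB)(-i\partial_{x_2}+iB)$ with $\Lambda_{g,A,q}\equiv -B|_N$ is exactly your $-(\partial_{x_2}+A_-)(\partial_{x_2}+A_+)$ after the substitution $A_+=-B$, $A_-=B-E$ (so that $A_++A_-=-E=\partial_{x_2}\log\sqrt{g_{11}}$), and the recursive symbol computation, including the use of $\partial_{\xi_1}a_-^{(1)}=0$ and $\partial_{\xi_1}^2 a_-^{(0)}=0$ by homogeneity, matches the paper's computation of $b_1,b_0,b_{-1}$ line by line.
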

\begin{proof}
    For completeness, we start by recalling a few facts from \cite[Section 3.2]{Cekic-20}. In \cite[Lemma 3.2]{Cekic-20} it is proved that there exists a pseudodifferential operator $B(x, -i\partial_{x_1})$ of order $1$ depending smoothly on $x_2$ for $x_2 \in [0, T]$ for some $T > 0$, satisfying
    \[
        \mc{L}_{g, A, q} \equiv (-i\partial_{x_2} + i E - i B) (-i\partial_{x_2} + i B),
    \]
    modulo smoothing, where $E = \frac{1}{2} \frac{\partial_{x_2} g^{11}}{g^{11}}$, and
    \begin{align*}
        \mc{L}_{g, A, q} &= -\frac{1}{\sqrt{\det g}} \sum_{i, j = 1}^2 (\partial_{x_i} + A_i) g^{ij} \sqrt{\det g} (\partial_{x_j} + A_j) + q\\
        &= (-i \partial_{x_2})^2 + i E (-i \partial_{x_2}) + \underbrace{g^{11} (-i\partial_{x_1})^2}_{=: Q_2} + \underbrace{(-i \frac{1}{2} \partial_{x_1} g^{11})(-i \partial_{x_1})}_{=:Q_1}\\
        &- 2ig^{11} A_1 (-i\partial_{x_1}) + \underbrace{\left(-\frac{1}{2} \partial_{x_1}g^{11} A_1 - g^{11} \partial_{x_1} A_1 - g^{11} A_1^2 + q\right)}_{=:G},
    \end{align*}
    and moreover, by \cite[Proposition 3.3]{Cekic-20}, we have $\Lambda_{g, A, q} \equiv -B|_{N}$ modulo smoothing. Equating the previous two equalities we get
    \[
        B^2 - EB + i [-i \partial_{x_2}, B] = Q_2 + Q_1 - 2ig^{11} A_1 (-i\partial_{x_1}) + G.
    \]
    Taking principal symbols and using the formula for the symbol of a composition we get
    \[
        \sum_{k = 0}^\infty \frac{(-i)^k}{k!} \partial_{\xi_1}^k b \partial_{x_1}^k b - Eb + \partial_{x_2} b = \underbrace{g^{11} \xi_1^2}_{ =: q_2} + \underbrace{(-i) \frac{1}{2} \partial_{x_1}g^{11} \xi_1}_{=: q_1} - 2ig^{11} A_1 \xi_1 + G,
    \]
    where $b$ is the full symbol of $B$. Expanding into homogeneous terms, writing $b \equiv \sum_{j = 0}^\infty b_{1 - k}$, where $b_{1 - k}$ is a symbol of order $1 - k$, we may define $b_1(x, \xi) := - \sqrt{g^{11}} |\xi_1|$. This gives further
    \begin{align*}
        b_0 &= \frac{1}{2\sqrt{g^{11}} |\xi_1|} \left(\partial_{x_2}b_1 - Eb_1 - q_1 + 2i g^{11} A_1 \xi_1 - i \partial_{\xi_1}b_1 \partial_{x_1}b_1\right)\\
        &= \frac{1}{2\sqrt{g^{11}} |\xi_1|} \left(-\frac{1}{2} \frac{\partial_{x_2} g^{11}}{\sqrt{g^{11}}} |\xi_1| + \frac{1}{2} (g^{11})^{-\frac{1}{2}} \partial_{x_2}g^{11} |\xi_1| + \frac{i}{2} \partial_{x_1}g^{11} \xi_1 + 2i g^{11} A_1 \xi_1\right.\\ 
        &- i \left.\left(-\sqrt{g^{11}} \frac{|\xi_1|}{\xi_1}\right)\left(-\frac{1}{2} \frac{\partial_{x_1} g^{11}}{\sqrt{g^{11}}} |\xi_1|\right)\right)\\
        &= i \sqrt{g^{11}} A_1 \frac{\xi_1}{|\xi_1|}.
    \end{align*}
    Similarly, we can read-off the next term in the expansion:
    \begin{align*}
        b_{-1} &= \frac{1}{2\sqrt{g^{11}} |\xi_1|} \left(\partial_{x_2} b_0 - E b_0 - G + \partial_{\xi_1} b_1 (-i\partial_{x_1}) b_0 + b_0^2 + \partial_{\xi_1} b_0 (-i \partial_{x_1}) b_1 - \frac{1}{2} \partial_{\xi_1}^2 b_1 \partial_{x_1}^2 b_1\right)\\
        &= \frac{1}{2\sqrt{g^{11}} |\xi_1|} \left(\partial_{x_2} (i \sqrt{g^{11}} A_1) \frac{\xi_1}{|\xi_1|} - \frac{1}{2} (g^{11})^{-1} \partial_{x_2} g^{11} i \sqrt{g^{11}} A_1 \frac{\xi_1}{|\xi_1|} + g^{11} \partial_{x_1} A_1 + \frac{1}{2} \partial_{x_1} g^{11} A_1\right.\\ 
        &\left.+ g^{11} A_1^2 - q + i \sqrt{g^{11}} \frac{\xi_1}{|\xi_1|} \partial_{x_1} (i\sqrt{g^{11}} A_1) \frac{\xi_1}{|\xi_1|} - g^{11} A_1^2\right)\\
        &= \frac{i}{2} \partial_{x_2} A_1 \xi_1^{-1} - \frac{q}{2 \sqrt{g^{11}} |\xi_1|},
    \end{align*}
    where in the second equality we used that $\partial_{\xi_1} b_0 \equiv 0$ and $\partial_{\xi_1}^2 b_1 \equiv 0$ by homogeneity. Since $a_0 = -b_1$, $a_1 = -b_0$, and $a_2 = -b_{-1}$, this completes the proof. The rest of the terms $b_{-2}, b_{-3}, \dotsc$, and so $a_3, a_4, \dotsc$ can be similarly computed, see \cite[Equation 3.8]{Cekic-20} (but in general the precise computations become more complicated, see also remark below). 
\end{proof}
\begin{rem}\rm
    Proposition \ref{prop:computation} for $A = 0$ and $q = 0$ shows in particular that in boundary normal coordinates we have $b_0 = b_{-1} = 0$, and \cite[Equation 3.8]{Cekic-20} shows that in fact $b_k = 0$ for all $k \leq 0$ and so $a (x, \xi) = \sqrt{g^{11}} |\xi_1|$. This means that $\Lambda_{g, 0, 0} -\frac{2\pi}{\ell(N)}|-i\partial_{x_1}| \in \Psi^{-\infty}(N)$ (we parameterise $N$ by $[0, 2\pi)$ and constant speed), which re-proves the results of \cite{Edward-93, Rozenbljum-79}. (Here, $|-i\partial_{x_1}| \in \Psi^{1}(N)$ should be seen through the action $|-i\partial_{x_1}| e^{in\bullet} (x_1) = |n| e^{in x_1}$ for $n \in \mathbb{Z}$.)
\end{rem}

\subsection{Spectral asymptotics of the DN map}\label{ssec:spectral-asymptotics-DN-map} We now assume that we have $m\geq 1$ connected components $N_1, \dotsc, N_m$ of $\partial M$. For each $j = 1, 2, \dotsc, m$ write $\varphi_{j} \in C^\infty(\partial M)$ for the function given by
\[
    \varphi_{j}(x)= 
\begin{cases}
    1, & \text{if } x\in \partial M_{j},\\
    0,              & \text{otherwise.}
\end{cases}
\]
We may write $C^\infty(\partial M) = \oplus_{i = 1}^m C^\infty(N_i)$ and denote 
\[
    \Lambda_{g, A, q}^{i j} = \varphi_{i} \Lambda_{g, A, q} \varphi_j, \quad i, j = 1, \dotsc, m.
\]
Then we have
\[
    \Lambda_{g, A, q} = \sum_{i, j = 1}^m \Lambda^{i j}_{g, A, q}.
\]
As $\Lambda_{g, A, q} \in \Psi^1(\partial M)$ by \cite[Proposition 3.3]{Cekic-20}, and using pseudolocality of pseudodifferential operators, $\Lambda_{g, A, q}^{i j}$ is smoothing for $i \neq j$. Therefore, we may write
\begin{align}\label{eq:lambdaii}
    \Lambda_{g, A, q} = \sum_{i = 1}^m \Lambda^{ii}_{g, A, q} + R, \qquad R := \sum_{i \neq j} \Lambda^{i j}_{g, A, q} \in \Psi^{-\infty}(\partial M).
\end{align}
Since $\Lambda_{g, A, q}$ is self-adjoint, each $\Lambda^{ii}_{g, A, q}$ is self-adjoint, and hence so is $R$. Finally, we note for future reference that $\Lambda_{g, -A, g}^{ij} = C \Lambda_{g, A, q}^{ij} C$. Indeed, by Proposition \ref{prop:symmetry}, $\Lambda_{g, -A, q} = C \Lambda_{g, A, q} C$, and it suffices to observe that the multiplication by $\varphi_i$ and $\varphi_j$ commutes with the conjugation $C$.

Write $\Spec(\bullet)$ for the spectrum of the operator $\bullet$. We can now combine Theorem \ref{thm:sharpasymptotics} and Proposition \ref{prop:computation} to get Theorem\,A:

\begin{proof}[Proof of Theorem \ref{thma}]
Since for $j = 1, \dotsc, m$ we have that $\Lambda^{jj}_{g, A, q}$ commute, and using \cite[Lemma 2.1]{Girouard-Parnovski-Polterovich-Sher-14}, the spectrum of $\Lambda_{g, A, q}$ is given by the union of spectra of $\Lambda^{jj}_{g, A, q}$, up to a rapidly decaying term. More precisely, writing $\mu_n(\bullet)$ for the $n$-th largest eigenvalue of a semi-positive operator $\bullet$, we have
\[
        \mu_n(\Lambda_{g, A, q}) = \mu_n\left(\sum_{j = 1}^m \Lambda^{jj}_{g, A, q} + R\right) = \mu_n\left(\sum_{j = 1}^m \Lambda^{jj}_{g, A, q}\right) + \mc{O}(n^{-\infty}), \quad n \to \infty.
\]
Note that 
\[
    \Spec\left(\sum_{j = 1}^m \Lambda^{jj}_{g, A, q}\right) = \bigcup_{j = 1}^m \Spec(\Lambda^{jj}_{g, A, q}).
\]
Since for each $j = 1, 2, \dotsc, m$, by Weyl asymptotics (or by Theorem \ref{thm:sharpasymptotics}) $\mu_n(\Lambda^{jj}_{g, A, q}) \sim C_j n$ as $n \to \infty$, for some $C_j > 0$, using Theorem \ref{thm:sharpasymptotics} it is straightforward to see that there are enumerations $\left(\lambda_n\left(\Lambda^{jj}_{g, A, q}\right)\right)_{n \in \mathbb{Z}}$ of eigenvalues of $\Lambda^{jj}_{g, A, q}$, as well as multisets $(\mc{S}_j)_{j = 1}^m$ and their enumerations as in the statement of this theorem, such that
\[
    \lambda_n^{(j)} = \lambda_n\left(\Lambda^{jj}_{g, A, q}\right) + \mc{O}(|n|^{-\infty}), \quad n \in \mathbb{Z}, \quad |n| \to \infty.
\]
Moreover, by Proposition \ref{prop:computation} and Theorem \ref{thm:sharpasymptotics} we may compute $b_0^{(j)}$, $b_1^{(j)}$, and $b_2^{(j)}$ as follows. Note that the principal symbol in Proposition \ref{prop:computation} of $\Lambda_{g, A, q}^{jj}$ is already $x$-independent if we parameterise the boundary by a constant times the arc-length so that the whole of $N_j$ is parameterised by $[0, 2\pi)$. Then 
\[
    \ell(N_j) = \int_0^{2\pi} \sqrt{g_{11}}\, dx_1 \implies g_{11} \equiv \frac{\ell(N_j)^2}{4\pi^2}.
\] 
Thus we may skip Step 1 in Theorem \ref{thm:sharpasymptotics} (i.e. we may use simply $\varphi = \mathrm{id}$) and for $j = 1, \dotsc, m$, directly obtain using also Steps 2 and 3 that
\begin{align*}
    b_0^{(j)}(\xi) &= \frac{2\pi}{\ell(N_j)} |\xi|\\
    b_1^{(j)}(\xi) &= p_j \frac{2\pi}{\ell(N_j)} \frac{\xi}{|\xi|} + \frac{1}{2\pi} \int_0^{2\pi} \left(-i\sqrt{g^{11}} A_1(x_1^{-1}(x), 0)\frac{\xi}{|\xi|}\right)\, dx\\
    &= \frac{\xi}{|\xi|} \left( p_j \frac{2\pi}{\ell(N_j)} - \frac{i}{\ell(N_j)} \int_{N_j} A\right),\\
    b_2^{(j)}(\xi) &= \frac{1}{2\pi} \int_0^{2\pi} \left(-\frac{i}{2} \partial_{x_2} A_1(x_1^{-1}(x), 0) \xi^{-1} + \frac{1}{2} (g^{11})^{-\frac{1}{2}} q(x_1^{-1}(x), 0) |\xi|^{-1}\right)\, dx\\
    &= -\frac{i}{4\pi} \xi^{-1} \int_{N_j} \partial_{x_2}A_1\, dx_1 + \frac{1}{4\pi} |\xi|^{-1} \int_{N_j} q\\
    &= \frac{i}{4\pi} \xi^{-1} \int_{N_j} \iota_{\nu} F_A + \frac{1}{4\pi} |\xi|^{-1} \int_{N_j} q,\\
\end{align*}
where in the third and fifth equalities we used that the volume form on $N_j$ is $\sqrt{g_{11}}\, dx_1$, in the last line that $F_A := dA = (-\partial_{x_2} A_1 + \partial_{x_1} A_2)\, dx_1 \wedge dx_2$ is the curvature of $A$, and that the outer normal $\nu$ is equal to $-\partial_{x_2}$ so 
\[
    \iota_\nu F_A = -\iota_{\partial_{x_2}} F_A = (-\partial_{x_2} A_1 + \partial_{x_1} A_2)\, dx_1,
\]
as well as that on $N_j$ we have $\partial_{x_1} A_2\, dx_1$ is exact. This completes the proof.
\end{proof}

\section{Unions of arithmetic progressions}\label{sec:ap}

This section is devoted to studying multisets $\mc{S}(R)$ formed by unions of arithmetic progressions generated by $R$; recall this notation was introduced in \eqref{eq:S(R)}. Three main results are proved: that $\mc{S}(R)$ is uniquely determined from its equivalence class up to the relation of close almost bijection (Lemma \ref{lemma:acb-implies-equal}); that in favourable situations, $\mc{S}(R)$ uniquely determines $R$ (Lemma \ref{lemma:ap-uniqueness}); and a complete classification of $\mc{S}(R)$ in terms of $R$ for small cardinality of $R$ (Proposition \ref{prop:ap-low-dim-uniqueness}). 

\subsection{Unions of arithmetic progressions up to close almost bijection.}\label{ssec:S(R)-cab} From Theorem \ref{thma} we see that the spectrum of the magnetic DN map is, up to small error, a union of arithmetic progressions. Recall that the notion of close almost bijection was introduced in Section \ref{sec:introduction}, as well as the notation $\mc{S}(R)$ (based in part on \cite{Girouard-Parnovski-Polterovich-Sher-14}). We introduce some further notation regarding generating multisets.

For $X$ and $Y$ as in Section \ref{sec:introduction}, we will write 
\[
    X \ab Y,
\]
if $X$ and $Y$ agree up to finitely many points, where the subscript in $\ab$ is interpreted as \emph{almost equal}. We will identify $(a, b)$ and $(c, d)$ if they generate the same arithmetic progression up to finitely many terms, i.e. if $a = c$ and there exists $m \in \mathbb{Z}$ such that $d = b + ma$; in other words, we can always select a unique representative $b \in [0, a)$. We say that the arithmetic progression generated by $(c, d)$ is a \emph{refinement} of the arithmetic progression generated by $(a, b)$, if there exists $m \in \mathbb{N}$ and $i \in \{0, 1, \dotsc, m - 1\}$ such that $c = ma$ and $d = b + ia$. We say that $R_2$ is a \emph{refinement} of $R_1$ if $R_2 \subset R_1'$, where $R_1'$ is obtained from $R_1$ by taking each arithmetic progression in $R_1$ and writing it as a union of arithmetic progressions with equal moduli.

We begin our study of multisets and arithmetic progressions by the following statement.
\begin{lemma}\label{lemma:N-cai}
    Assume $F: \mathbb{N} \to \mc{S}(R)$ is a close almost injection, where 
    \[
        R = \{(a_1, b_1), \dotsc, (a_k, b_k)\}.
    \]
    Then there exists $R_{\mathrm{rat}} \subset R$ whose elements are in $\mathbb{Q}^2$, such that $\mathbb{N} \cap \mc{S}(R_{\mathrm{rat}}) \ab \mathbb{N}$. Moreover, there exists a refinement $R'_{\mathrm{rat}}$ of $R_{\mathrm{rat}}$ such that $\mathbb{N} \ab \mc{S}(R'_{\mathrm{rat}})$.
\end{lemma}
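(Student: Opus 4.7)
The plan is to assign to each large $n \in \mathbb{N}$ the progression of $R$ containing $F(n)$, partition $\mathbb{N}$ accordingly, handle rational and irrational progressions separately, and then upgrade a density-zero conclusion to one with finite exceptions by exploiting the structure of $\mc{S}(R_{\mathrm{rat}}) \cap \mathbb{N}$ as a finite union of integer arithmetic progressions.

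First I would fix, for each sufficiently large $n \in \mathbb{N}$, an index $i(n) \in \{1, \dotsc, k\}$ with $F(n) \in a_{i(n)}\mathbb{N} + b_{i(n)}$, writing $F(n) = a_{i(n)} m(n) + b_{i(n)}$; set $S_i := \{n \geq N_0 : i(n) = i\}$, so $\bigsqcup_i S_i \ab \mathbb{N}$. Since $F$ is close, $\epsilon_n := |F(n) - n| \to 0$. If $(a_i, b_i) \in \mathbb{Q}^2$, then $a_i\mathbb{N} + b_i$ lies in some discrete lattice $\tfrac{1}{Q_i}\mathbb{Z}$, so $F(n)$ is a point of a discrete subset of $\mathbb{R}$ converging to the integer $n$, forcing $F(n) = n$ for large $n \in S_i$, whence $n = a_i m(n) + b_i \in \mc{S}(R_{\mathrm{rat}})$ where $R_{\mathrm{rat}} := R \cap \mathbb{Q}^2$.

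For irrational indices I would show that $S_i$ has density zero in $\mathbb{N}$. If $a_i \in \mathbb{Q}$ but $b_i \notin \mathbb{Q}$, the same discreteness rules out $a_i m(n) - n \to -b_i$ and $S_i$ is already finite. If $a_i \notin \mathbb{Q}$, then Weyl's equidistribution theorem applied to $\{(n - b_i)/a_i \bmod 1\}_{n \in \mathbb{N}}$ shows that $\{n : \dist((n - b_i)/a_i, \mathbb{Z}) < \delta\}$ has density $2\delta$ for every $\delta > 0$; since $n \in S_i$ forces this distance to be less than $\epsilon_n/a_i$, which tends to $0$, one concludes density zero by letting $\delta \to 0$.

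The main (and I expect only real) obstacle is to turn the density-zero bound into a finite one. For this I would observe that each $(a_i\mathbb{N} + b_i) \cap \mathbb{Z}$ with $(a_i, b_i) \in \mathbb{Q}^2$ is either empty or an arithmetic progression of $\mathbb{N}$ with integer common difference; hence $\mathbb{N} \cap \mc{S}(R_{\mathrm{rat}})$ is a finite union of such integer progressions, and its complement in $\mathbb{N}$ is a finite union of residue classes modulo $L := \lcm$ of the common differences, hence either finite or of density $\geq 1/L$. The density-zero inclusion $\mathbb{N} \setminus \mc{S}(R_{\mathrm{rat}}) \subset \bigsqcup_{(a_i, b_i) \notin \mathbb{Q}^2} S_i$ (up to finitely many points) rules out the second alternative, giving $\mathbb{N} \cap \mc{S}(R_{\mathrm{rat}}) \ab \mathbb{N}$. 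For the refinement step, I would split each progression of $R_{\mathrm{rat}}$ into sub-progressions of common modulus $L$; the covering property forces every residue class modulo $L$ to be represented among the integer-valued sub-progressions, and selecting one representative per residue class yields the desired $R'_{\mathrm{rat}}$.
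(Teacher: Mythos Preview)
Your argument is correct and reaches the conclusion by a genuinely different route than the paper. The paper proceeds inductively: it first shows by an ergodicity estimate that at least one $a_i$ must be rational (otherwise a uniform density bound contradicts closeness), then peels off the intersection of that progression with $\mathbb{N}$, decomposes the leftover into finitely many arithmetic sub-progressions of $\mathbb{N}$, and recurses on each piece against the remaining elements of $R$; the refinement $R'_{\mathrm{rat}}$ is built along the way from the successive intersections. Your approach instead partitions $\mathbb{N}$ in one stroke according to which progression $F(n)$ lands in, uses discreteness to force $F(n)=n$ on the rational pieces and Weyl equidistribution to give the irrational pieces density zero, and then invokes the key structural fact that $\mathbb{N}\cap\mc{S}(R_{\mathrm{rat}})$ is a finite union of integer progressions, so its complement is a union of residue classes modulo some $L$ and hence cannot have density zero unless it is finite. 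This last observation is what lets you avoid the paper's induction entirely, and it makes the proof more direct. One small point worth tightening: when you take $L=\lcm$ of the common differences, you are implicitly restricting to those $(a_i,b_i)\in R_{\mathrm{rat}}$ whose progression actually meets $\mathbb{Z}$; for the refinement step you should either take $R_{\mathrm{rat}}$ to be only those pairs, or else replace $L$ by the $\lcm$ of all numerators so that every progression in $R_{\mathrm{rat}}$ can be refined to the common modulus. Either fix is routine.
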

\begin{proof}
    We divide the proof into two steps. We may assume that $R$ is minimal, i.e. there is no $R' \subsetneqq R$, and no $\widetilde{F}$ differing from $F$ at most at finitely many points, such that $\widetilde{F}: \mathbb{N} \to \mc{S}(R')$ is a close almost injection.
    \medskip

    \emph{Step 1.} We claim that one of $a_1, \dotsc, a_k$ is rational and for the sake of contradiction, we assume they are all irrational. On one hand, since $F$ is close, for any fixed $\varepsilon \in (0, \frac{1}{2})$ we have
    \[
        S(N, \varepsilon) := \frac{1}{N} \sharp \{n \in \{1, \dotsc, N\} \mid \exists i \in \{1, \dotsc, k\},\, \exists m \in \mathbb{N},\, |m a_i + b_i - n| < \varepsilon\} \to 1, \quad N \to \infty.
    \]
    On the other hand, we can estimate
    \begin{align*}
        S(N, \varepsilon) &\leq \sum_{i = 1}^k \frac{1}{N} \sharp \{n \in \{1, \dotsc, N\} \mid \exists m \in \mathbb{N},\, |m a_i + b_i - n| < \varepsilon\}\\
        & \leq \sum_{i = 1}^k \frac{1}{N} \sharp \{(m, n) \in \{1, \dotsc, CN\} \times \{1, \dotsc, N\} \mid |m a_i + b_i - n| < \varepsilon\}\\
        & \leq \sum_{i = 1}^k \frac{1}{N} \sharp \{m \in \{1, \dotsc, CN\} \mid \exists n \in \mathbb{N},\, |m a_i + b_i - n| < \varepsilon\}\\
        &= C \sum_{i = 1}^k \frac{1}{CN} \sum_{m = 1}^{CN} \mathbf{1}_{(-\varepsilon, \varepsilon)} \circ R_{a_i}^m(b_i)\\
        & \leq \sum_{i = 1}^k 3C\varepsilon = 3Ck \varepsilon,
    \end{align*}
    for $N$ large enough. Here in the second line the constant $C > 0$ depends on $R$, in the third line we used that for each $m$ there exists at most one $n$ such that $|m a_i + b_i - n| < \varepsilon$, while in the second to last line we introduced the notation $R_{a_i}(x) = x + a_i \mod \mathbb{Z}$ for the circle rotation by angle $a_i$, and $\mathbf{1}_{(-\varepsilon, \varepsilon)}$ denotes the indicator function of the interval $(-\varepsilon, \varepsilon)$. In the last line we used the unique ergodicity of irrational rotations \cite[Example 4.11]{Einsiedler-Ward-11}. This yields a contradiction for $\varepsilon$ small enough and $N$ large enough. 
    \medskip

    \emph{Step 2.} According to Step 1, we may assume without loss of generality that $a_1 = \frac{p_1}{q_1}$ is rational, where $p_1, q_1 \in \mathbb{Z}_{> 0}$ are coprime. Then modulo $\mathbb{Z}$, $a_1\mathbb{N} + b_1$ takes exactly $q_1$ values, and so either $\mathbb{N}$ intersects $a_1 \mathbb{N} + b_1$ or they are a positive distance apart. In the latter case, we can set $R' := R\setminus (a_1, b_1)$ and after possibly re-defining $F$ at finitely many points, we get a close almost injection $F: \mathbb{N} \to \mc{S}(R')$ which contradicts our assumption. In the former case we see that $b_1 = \frac{r_1}{q_1}$ for some $r_1 \in \mathbb{Z}$. Therefore, if $p_1' \in \mathbb{N}$ is such that $p_1 p_1' \equiv_{q_1} 1$, then $a_1\mathbb{N} + b_1$ intersects $\mathbb{N}$ at $p_1 \mathbb{N} + s_1$ up to finitely many points, where $s_1 := r_1 \frac{1 - p_1' p_1}{q_1}$. We therefore see that $F$ defines (after possibly changing $F$ at finitely many points) a close almost injection $F: \mc{S}(Q_1) \to \mc{S}(R\setminus\{(a_1, b_1)\})$, where $Q_1 := \cup_{i = 1}^{p_1 - 1} \{(p_1, s_1 + i)\}$. 
    
    If $p_1 = 1$ we may stop and the proof is complete. If not, by re-scaling and translating for each $i = 1, \dotsc, p_1 - 1$ the restriction $F: \mc{S}(\{(p_1, s_1 + i)\}) \to \mc{S}(R\setminus\{(a_1, b_1)\})$, we may thus iterate Step 1 and the discussion so far in Step 2, and without loss of generality assume that $(a_2, b_2) \in \mathbb{Q}^2$. After further refining each $\mc{S}(\{(p_1, s_1 + i)\})$ as in the previous paragraph and removing the intersection with $a_2 \mathbb{N} + b_2$, we may assume that the union of these refinements $Q_2$ over all $i$ closely almost injects via $F$ to $\mc{S}(R \setminus \{(a_1, b_1), (a_2, b_2)\}$. Note that if we write $a_2 = \frac{p_2}{q_2}$ where $p_2, q_2 \in \mathbb{Z}_{>0}$ are coprime, then the refinements in $Q_2$ will have steps $\frac{p_2}{\mathrm{gcd}(p_1, p_2)}$; the intersection of $\mc{S}(\{(p_1, s_1 + i)\})$ and $a_2 \mathbb{N} + b_2$ may be empty. Since $|R|$ is finite, this iteration will eventually stop, and we obtain a set $R_{\mathrm{rat}} \subset R$ with elements in $\mathbb{Q}^2$, such that $\mathbb{N} \cap \mc{S}(R_{\mathrm{rat}}) \ab \mathbb{N}$. 
    
    The refinement $R'_{\mathrm{rat}}$ of $R_{\mathrm{rat}}$ is obtained by taking the union of refinements of $(a_j, b_j)$ obtained by intersection with refinements of $\mathbb{N}$ in the procedure above. For instance, we would first take the refinement $(p_1, s_1) = (q_1 a_1, b_1 - r_1 p_1' a_1)$ of $(a_1, b_1)$.
\end{proof}

\begin{rem}\rm
    There is a close almost injection $F: \mathbb{N} \to \mathbb{N} \cup \mathbb{N} \sqrt{2}$ whose range does not land fully in $\mathbb{N}$. Indeed, take sequences $m_i, n_i \to \infty$ as $i \to \infty$, such that $|m_i \sqrt{2} - n_i| < i^{-1}$. Then set $F(n_i) := m_i$ and define $F$ to be the identity otherwise.
\end{rem}

We can then treat the case of close almost bijections of arithmetic progressions.

\begin{lemma}\label{lemma:acb-implies-equal}
    Let $R_1$ and $R_2$ be two generating multisets. Assume $F: \mc{S}(R_1) \to \mc{S}(R_2)$ is a close almost bijection. Then $\mc{S}(R_1) \ab \mc{S}(R_2)$. Moreover, for each $(a, b) \in R_1$, there exists a refinement $R_2(a, b)$ of $R_2$ such that $\mc{S}(\{(a, b)\}) \ab \mc{S}(R_2(a, b))$, and $\cup_{(a, b) \in R_1}R_2(a, b) = R_2'$, where $R_2'$ is a refinement of $R_2$ and $\mc{S}(R_2) \ab \mc{S}(R_2')$.
\end{lemma}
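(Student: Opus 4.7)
The plan is to apply Lemma \ref{lemma:N-cai} separately to each arithmetic progression appearing in $R_1$, then assemble the resulting refinements using that $F$ is a bijection modulo finitely many elements.

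First I would fix $(a, b) \in R_1$ and consider the restriction of $F$ to the progression $a\mathbb{N} + b \subset \mc{S}(R_1)$. Let $\phi(x) := (x - b)/a$. Since $a > 0$, the map $\phi$ carries $a\mathbb{N} + b$ bijectively onto $\mathbb{N}$ and carries $\mc{S}(R_2)$ onto $\mc{S}(\tilde R_2)$, where
\[
    \tilde R_2 := \{(a_i/a,\, (b_i - b)/a) : (a_i, b_i) \in R_2\}.
\]
The conjugated map $G := \phi \circ F \circ \phi^{-1}|_{\mathbb{N}}: \mathbb{N} \to \mc{S}(\tilde R_2)$ is an almost injection, and closeness is preserved since
\[
    |G(n) - n| = a^{-1}|F(an + b) - (an + b)| \to 0, \quad n \to \infty.
\]
Applying Lemma \ref{lemma:N-cai} to $G$ produces a refinement $\tilde R_2(a, b)$ of a sub-multiset of $\tilde R_2$ with $\mathbb{N} \ab \mc{S}(\tilde R_2(a, b))$. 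Undoing $\phi$ (each $(\alpha, \beta) \in \tilde R_2(a, b)$ becomes $(a\alpha, a\beta + b) \in R_2(a, b)$) yields a refinement $R_2(a, b)$ of a sub-multiset of $R_2$, hence a refinement of $R_2$ itself, satisfying $\mc{S}(\{(a, b)\}) \ab \mc{S}(R_2(a, b))$; this is the second assertion of the lemma.

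For the global claim, I would set $R_2' := \bigcup_{(a, b) \in R_1} R_2(a, b)$ as a multiset, which is again a refinement of $R_2$ since each summand is. Using that $F$ is an almost bijection and writing $\mc{S}(R_1)$ as the disjoint union $\sqcup_{(a, b) \in R_1}(a\mathbb{N} + b)$, one obtains
\[
    \mc{S}(R_2) \;\ab\; \bigsqcup_{(a, b) \in R_1} F(a\mathbb{N} + b) \;\ab\; \bigsqcup_{(a, b) \in R_1} \mc{S}(R_2(a, b)) \;=\; \mc{S}(R_2'),
\]
as multisets, establishing $\mc{S}(R_2) \ab \mc{S}(R_2')$. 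The first assertion $\mc{S}(R_1) \ab \mc{S}(R_2)$ is then immediate: $\mc{S}(R_1) = \sqcup_{(a, b)} \mc{S}(\{(a, b)\}) \ab \sqcup_{(a, b)} \mc{S}(R_2(a, b)) = \mc{S}(R_2') \ab \mc{S}(R_2)$.

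The main obstacle is multiplicity bookkeeping: a bare application of Lemma \ref{lemma:N-cai} only yields refinements of \emph{sub}-multisets of $R_2$, and different choices of $(a, b) \in R_1$ a priori produce refinements at different depths of the same element of $R_2$. What rules out both over- and under-counting is precisely the bijection property of $F$, which forces the disjoint union $\bigsqcup_{(a,b)} \mc{S}(R_2(a,b))$ to match $\mc{S}(R_2)$ with correct multiplicities almost everywhere; this in turn forces the individual $R_2(a, b)$ to glue coherently into a genuine refinement of $R_2$.
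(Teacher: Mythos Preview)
Your overall strategy---rescale each progression in $R_1$ to $\mathbb{N}$, apply Lemma~\ref{lemma:N-cai}, and glue---matches the paper's, but the gluing step has a genuine gap. The middle $\ab$ in your displayed chain
\[
    \mc{S}(R_2) \;\ab\; \bigsqcup_{(a, b) \in R_1} F(a\mathbb{N} + b) \;\ab\; \bigsqcup_{(a, b) \in R_1} \mc{S}(R_2(a, b))
\]
is unjustified. Lemma~\ref{lemma:N-cai} gives $\mc{S}(R_2(a,b)) \ab a\mathbb{N}+b$, \emph{not} $\mc{S}(R_2(a,b)) \ab F(a\mathbb{N}+b)$, and these can differ at infinitely many points: take $R_1 = R_2 = \{(1,0),(\sqrt{2},0)\}$ and let $F$ swap $n_i$ with $m_i\sqrt{2}$ along a sequence with $|n_i - m_i\sqrt{2}| \to 0$ (exactly the phenomenon in the Remark following Lemma~\ref{lemma:N-cai}). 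Without that step your chain only shows $\mc{S}(R_2') \ab \mc{S}(R_1)$, so the remaining claim $\mc{S}(R_2') \ab \mc{S}(R_2)$ is precisely what you set out to prove---the argument is circular. Your final paragraph names the difficulty correctly but does not resolve it: bijectivity of $F$ alone does not force the \emph{independently} chosen refinements $R_2(a,b)$ to partition $\mc{S}(R_2)$ with the correct multiplicities. (A related issue: a multiset union of refinements of $R_2$ need not itself be a refinement of $R_2$; with $R_2 = \{(1,0),(2,0),(2,1)\}$ and $R_1 = \{(1,0),(1,0)\}$ one can legitimately obtain $R_2' = \{(1,0),(1,0)\}$, which is not a sub-multiset of any full refinement of $R_2$.)

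The paper avoids this by working sequentially rather than in parallel. After applying Lemma~\ref{lemma:N-cai} to a single $(a,b) \in R_1$, it \emph{modifies} $F$ into a close almost bijection $\widetilde{F}$ that is the identity on $a\mathbb{N}+b$ for large $n$: whenever $F(an+b) \neq an+b$, swap the value of $F$ at $an+b$ with its value at the preimage of $an+b$. This lets one peel off that progression and obtain a close almost bijection $\widetilde{F}: \mc{S}(R_1\setminus\{(a,b)\}) \to \mc{S}(R_2')$ onto a genuine refinement $R_2'$ of $R_2$. Since $|R_1\setminus\{(a,b)\}| = |R_1| - 1$, iterating terminates and yields both $\mc{S}(R_1)\ab\mc{S}(R_2)$ and a coherent family of refinements $R_2(a,b)$ as a byproduct. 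The swapping trick is exactly the missing ingredient that coordinates the refinements across different $(a,b)$.
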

\begin{proof}
    If $(a, b) \in R_1$, by Lemma \ref{lemma:N-cai} (after translation and re-scaling so we are in the case of $(1, 0)$) we conclude that $\mc{S}(\{(a, b)\}) \cap \mc{S}(R_2) \ab \mc{S}(\{(a, b)\})$. We now have to justify why $\mc{S}(R_1)$ is a subset of $\mc{S}(R_2)$, up to finitely many points, i.e. to handle multiplicities. To do this, we define a close almost bijection $\widetilde{F}: \mc{S}(R_1 \setminus\{(a, b)\}) \to \mc{S}(R_2')$, where $R_2'$ is a refinement of $R_2$.

    For large $n$, and if $F(an + b) \neq an + b$, and for some $x \in \mc{S}(R_1)$ we have $F(x) = an + b$ (for large $n$, as $F$ is a close almost bijection, such $x$ exists and is unique), we set $\widetilde{F}(x) := F(an + b)$ and $\widetilde{F}(an + b) = an + b$. It is straightforward to check that $\widetilde{F}: \mc{S}(R_1) \to \mc{S}(R_2)$ is a close almost bijection. Moreover, from Lemma \ref{lemma:N-cai}, we conclude that there are refinements $R_2'$ and $R_{20}$ of $R_2$ such that $\mc{S}(R_2) \ab \mc{S}(R_{20})$, $R_2' \subset R_{20}$, and $\mc{S}(\{(a, b)\}) = \mc{S}(R_{20} \setminus R_2')$. By construction, $\widetilde{F}|_{\mc{S}(\{(a, b)\})}$ is the identity (up to finitely many points), so up to changing $\widetilde{F}$ at finitely many points, we see that $\widetilde{F}: \mc{S}(R_1\setminus\{(a, b)\}) \to \mc{S}(R_2')$ is a close almost bijection. Since $R_1\setminus\{(a, b)\}$ has cardinality $|R_1| - 1$, this inductive procedure will eventually stop, and this completes the proof of the first claim. 
    
    The claims about the existence of $R_2(a, b)$ immediately follow from the preceding arguments, and the fact that the inductive procedure eventually stops.
\end{proof}

\begin{rem}\rm
    Lemma \ref{lemma:acb-implies-equal} gives an alternative proof of what was suggested in \cite[Remark 2.10]{Girouard-Parnovski-Polterovich-Sher-14}.
\end{rem}

For further use, we record another lemma which is a consequence of the preceding two lemmas.

\begin{lemma}\label{lemma:rational-property}
    Assume we are in the setting of Lemma \ref{lemma:acb-implies-equal}. Then, there exists $k \in \mathbb{Z}_{\geq 1}$, such that for $i = 1, 2$ there are partitions $R_{i, 1}, \dotsc, R_{i, k}$ of $R_i$, satisfying the following property. For each $j = 1, \dotsc, k$, there exist $(x_j, y_j) \in \mathbb{R}^2$ such that for every $(a, b) \in R_{1,j}\cup R_{2, j}$, we have
    \begin{equation}\label{eq:rational-property}
        a \in x_j \mathbb{Q}, \quad b - y_j \in x_j \mathbb{Q}.
    \end{equation}
\end{lemma}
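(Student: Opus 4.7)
The plan is to derive Lemma~\ref{lemma:rational-property} by introducing a natural equivalence relation on $R_1 \cup R_2$ and taking its equivalence classes as the blocks of the desired partitions.

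Define $\sim$ on $\mathbb{R}_{>0} \times \mathbb{R}$ by declaring $(a,b) \sim (c,d)$ if and only if $c \in a\mathbb{Q}$ and $d-b \in a\mathbb{Q}$. Reflexivity is immediate, while symmetry and transitivity follow from the observation that $c \in a\mathbb{Q}$ is equivalent to the equality $a\mathbb{Q} = c\mathbb{Q}$ of subsets of $\mathbb{R}$. Let $C_1,\dots,C_k$ denote the equivalence classes of $\sim$ on $R_1 \cup R_2$, set $R_{i,j} := R_i \cap C_j$, and pick any representative $(x_j,y_j) \in C_j$. By the very definition of $\sim$, every $(a,b) \in C_j$ satisfies $a \in x_j\mathbb{Q}$ and $b - y_j \in x_j\mathbb{Q}$, which is exactly \eqref{eq:rational-property}.

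What remains is to show that each class $C_j$ meets both $R_1$ and $R_2$, ensuring that the partitions of $R_1$ and $R_2$ have the same number $k$ of non-empty blocks. Let $(a,b) \in R_1$. By Lemma~\ref{lemma:acb-implies-equal}, the refinement $R_2(a,b)$ is non-empty (otherwise $\mc{S}(\{(a,b)\})$ would be finite), and $\mc{S}(\{(a,b)\}) \ab \mc{S}(R_2(a,b))$. Pick any $(c',d') \in R_2(a,b)$; by the definition of refinement it arises from some $(c,d) \in R_2$ via $c' = mc$ and $d' = d + ic$ for integers $m \geq 1$ and $0 \leq i < m$. Since $c'\mathbb{N} + d' \subset a\mathbb{N} + b$ up to finitely many terms, comparing consecutive terms $c'n+d'$ and $c'(n+1)+d'$ forces $c'/a \in \mathbb{N}$, and then $(d'-b)/a \in \mathbb{Z}$. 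Dividing by $m$ and subtracting the rational multiple $ic$ yields $c \in a\mathbb{Q}$ and $d - b \in a\mathbb{Q}$, so $(c,d) \sim (a,b)$; hence $R_2 \cap C_j \neq \emptyset$ whenever $R_1 \cap C_j \neq \emptyset$. The reverse direction follows from the same argument applied to a close almost bijection $\widetilde{F}:\mc{S}(R_2) \to \mc{S}(R_1)$ inverting $F$ up to finitely many points, whose existence is noted in \S\ref{ssec:close-almost-bijection-intro}.

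The only non-routine step is the refinement bookkeeping in the preceding paragraph: one has to chase through the definition of refinement carefully to see that $\mathbb{Q}$-rationality of the ratio $c'/a$ descends to the corresponding rationality statements for $(c,d)$. Once this is in hand, the rest is a purely formal equivalence-class manipulation, and relabeling the common collection of non-empty equivalence classes as $j = 1,\dots,k$ produces the partitions asserted in the statement.
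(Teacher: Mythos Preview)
Your proof is correct and takes a genuinely different route from the paper's. You define the equivalence $\sim$ directly via the rational compatibility condition \eqref{eq:rational-property} on $R_1\cup R_2$, and then use Lemma~\ref{lemma:acb-implies-equal} only to check that every class meets both $R_i$. The paper instead builds the partition through the refinements $R_2(a,b)$: it defines an equivalence on $R_1$ by declaring $(a,b)$ and $(c,d)$ related when $R_2(a,b)$ and $R_2(c,d)$ contain refinements of a common element of $R_2$, and then reads off the $R_2$-blocks from this. Your argument is more elementary and conceptually cleaner, since the relation $\sim$ is exactly the one dictated by the conclusion. One thing the paper's construction yields for free, and which is used in the proof of Lemma~\ref{lemma:ap-uniqueness}, is that $\mc{S}(R_{1,j})\ab\mc{S}(R_{2,j})$ for each $j$; this is not part of the statement, but it is worth noting that your partition also has this property, since distinct $\sim$-classes $C_j$ and $C_{j'}$ give arithmetic progressions whose value sets lie in cosets $y_j+x_j\mathbb{Q}$ and $y_{j'}+x_{j'}\mathbb{Q}$ that intersect in at most one point.
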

\begin{proof}
    We first observe that \eqref{eq:rational-property} holds for $(a, b)$ if and only if it holds for a refinement of $(a, b)$. Moreover, we see that $x_j$ is unique up to multiplication by a rational number, and $y_j$ up to translation by an element of $x_j \mathbb{Q}$. 

    By Lemma \ref{lemma:N-cai}, we see by the construction of $R_2(a, b)$ in Lemma \ref{lemma:acb-implies-equal}, that $\{(a, b)\} \cup R_2(a, b)$ satisfies \eqref{eq:rational-property} with $x_j = a$ and $y_j = b$. For $(a, b), (c, d) \in R_1$, we write $R_2(a, b) \sim R_2(c, d)$ if they both contain elements that are refinements of the same arithmetic progression in $R_2$. It is immediate to see that for an equivalence class $\mc{C}$ (with respect to $\sim$), we have
    \[
        \cup_{R_2(a, b) \in \mc{C}} \mc{S}(R_2(a, b)) \ab \mc{S}(R_{2, 1}),
    \]
    up to finitely many elements for some $R_{2, 1} \subset R_2$, which in turn agrees with $\mc{S}(R_{1, 1})$, where $R_{1, 1} = \{(a, b) \mid R_2(a, b) \in \mc{C}\}$. By construction elements of $R_{1, 1} \cup R_{2, 1}$ satisfy \eqref{eq:rational-property}, for some $(x_1, y_1)$, and by iterating this procedure we obtain the required partition. This finishes the proof.
\end{proof}

\subsection{Uniqueness for generating multisets.}\label{ssec:uniqueness-generating-multisets} We are now in a position to prove results about whether $\mc{S}(R)$ uniquely determines $R$. We use the generating function approach attributed to Newman-Mirsky (see \cite{Soifer-24}). In this subsection, for a generating multiset $R$, we will write
\[
    R^- := \{(a, -b) \mid (a, b) \in R\}.
\]

\begin{lemma}\label{lemma:ap-uniqueness}
    For $i = 1, 2$, let $R_i = \{(a_{i, 1}, b_{i, 1}), \dotsc, (a_{i, k_i}, b_{i, k_i}))\}$ for some $k_i \in \mathbb{N}$ be two generating multisets, such that 
    \[
        a_{i, 1} \leq a_{i, 2} \leq \dotsb \leq a_{i, k_i}.
    \]
    Then the following holds:
    \begin{enumerate}[label*=\arabic*.]
        \item Assume that the second coordinate of the elements of $R_1$ and $R_2$ is zero. If $\mc{S}(R_1)$ and $\mc{S}(R_2)$ are in a close almost bijection, then $R_1 = R_2$.

        \item Assume that
        \begin{equation}\label{eq:strict-ineq}
            a_{i, 1} < a_{i, 2} < \dotsb < a_{i, k_i}, \quad i = 1, 2.
        \end{equation}
        If $\mc{S}(R_1)$ and $\mc{S}(R_2)$ are in a close almost bijection, then $R_1 = R_2$, that is
        \begin{equation}\label{eq:conclusion1}
            k_1 = k_2, \quad a_{1, j} = a_{2, j}, \quad b_{2, j} - b_{1, j} \in a_{1, j} \mathbb{Z}, \quad j = 1, \dots, k_1.
        \end{equation}

        \item Assume \eqref{eq:strict-ineq} holds, as well as that 
        \begin{equation}\label{eq:anomaly}
            a_{i, j} \pm 4 b_{i, j} \not \in 4a_{i, j} \mathbb{Z}, \quad j = 1, \dotsc, k_i, \quad i = 1, 2.
        \end{equation}
        If $\mc{S}(R_1 \cup R_1^-)$ and $\mc{S}(R_2 \cup R_2^-)$ are in a close almost bijection, then 
        \begin{equation}\label{eq:conclusion2}
            k_1 = k_2, \quad a_{1, j} = a_{2, j}, \quad b_{2, j} \pm b_{1, j} \in a_{1, j} \mathbb{Z}, \quad j = 1, \dots, k_1.
        \end{equation}

        \item Assume $k_1 = k_2$ and $a_{1, j} = a_{2, j}$ for $j = 1, \dotsc, k_1$. If $\mc{S}(R_1 \cup R_1^-)$ and $\mc{S}(R_2 \cup R_2^-)$ are in a close almost bijection, then 
        \begin{equation*}
            b_{2, j} \pm b_{1, j} \in a_{1, j} \mathbb{Z}, \quad j = 1, \dots, k_1.
        \end{equation*}
    \end{enumerate}
\end{lemma}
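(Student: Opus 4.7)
The plan is to combine Lemma~\ref{lemma:acb-implies-equal}, which upgrades the close almost bijection to an actual multiset equality up to finitely many elements, with a Newman--Mirsky type generating-function argument exploiting the shared modulus structure. First, Lemma~\ref{lemma:acb-implies-equal} gives $\mc{S}(R_1 \cup R_1^-) \ab \mc{S}(R_2 \cup R_2^-)$. Applying Lemma~\ref{lemma:rational-property} and rescaling within each rational block, it suffices to treat the case where all $a_j, b_{i,j}$ are integers.

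In this integer setting, I would study the rational generating function
\[
    T_i(z) := \sum_{j=1}^{k} \frac{z^{b_{i,j}} + z^{-b_{i,j}}}{1 - z^{a_j}}, \qquad i = 1, 2,
\]
whose pole data encodes $\mc{S}(R_i \cup R_i^-)$. Since the two multisets agree up to finitely many elements, $T_1(z) - T_2(z)$ is a Laurent polynomial, and hence has no poles at any root of unity. Grouping the summands by their shared modulus and writing $P_c(z) := \sum_{j : a_j = c} \bigl[(z^{b_{1,j}} + z^{-b_{1,j}}) - (z^{b_{2,j}} + z^{-b_{2,j}})\bigr]$, the vanishing of the residue of $T_1 - T_2$ at any root of unity $\zeta$ of order $c$ yields
\[
    \sum_{c' : c \mid c'} \frac{P_{c'}(\zeta)}{c'} = 0,
\]
where $c'$ ranges over the distinct moduli appearing in $R_1 \cup R_2$.

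The final step is a downward induction on the distinct moduli $c_1 < \dotsb < c_\ell$, proving at stage $r$ that $P_{c_r}(\zeta) = 0$ for \emph{every} $c_r$-th root of unity $\zeta$; by Fourier inversion on $\mathbb{Z}/c_r\mathbb{Z}$ this is equivalent to the equality of the symmetric multisets
\[
    M_i^{c_r} := \{b_{i,j} \bmod c_r,\ -b_{i,j} \bmod c_r : a_j = c_r\}.
\]
The base case $r = \ell$ is immediate: $c_\ell \mid a_j$ together with $a_j \leq c_\ell$ forces $a_j = c_\ell$, so the displayed relation at any $c_\ell$-th root reduces to $P_{c_\ell}(\zeta)/c_\ell = 0$. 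For the inductive step, at $\zeta$ with $\zeta^{c_r} = 1$, the inductive hypothesis that $M_1^{c'} = M_2^{c'}$ for each $c' > c_r$ gives $P_{c'}(\zeta) = 0$ for \emph{all} such $\zeta$ (whether or not $\zeta$ is a primitive $c'$-th root), killing all $c' > c_r$ contributions and leaving a relation among $P_{c_r}(\zeta)$ and $P_{c'}(\zeta)$ for $c' < c_r$ with $\mathrm{ord}(\zeta) \mid c'$. This remaining system is triangular in the divisor lattice of $c_r$, and can be unwound by first processing divisors of $c_r$ that are not themselves moduli (where only $P_{c_r}$ contributes at the corresponding primitive roots) and then using Möbius-type inversion to isolate $P_{c_r}(\zeta)$. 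Once $M_1^{c_r} = M_2^{c_r}$ has been established for every $r$, a reordering of $R_2$ within each modulus block delivers the desired conclusion $b_{2,j} \pm b_{1,j} \in a_{1,j}\mathbb{Z}$. The principal obstacle lies precisely in this last induction: extending the vanishing of $P_{c_r}(\zeta)$ from primitive to all $c_r$-th roots while carefully disentangling contributions from smaller moduli sharing common divisors with $c_r$; the stronger inductive statement ``$M_1^{c'} = M_2^{c'}$'' (rather than merely equality of primitive-mode Fourier coefficients) is what makes the cancellation argument close.
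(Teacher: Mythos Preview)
Your induction has a genuine gap already at the base case. The residue relation at a $c_\ell$-th root $\zeta$ reads $\sum_{c':\,\mathrm{ord}(\zeta)\mid c'} P_{c'}(\zeta)/c'=0$, and this collapses to $P_{c_\ell}(\zeta)=0$ only when $\zeta$ is a \emph{primitive} $c_\ell$-th root; for non-primitive $\zeta$, smaller moduli $c'<c_\ell$ with $\mathrm{ord}(\zeta)\mid c'$ contribute and have not yet been processed. Vanishing only at primitive roots is not enough for Fourier inversion on $\mathbb{Z}/c_\ell\mathbb{Z}$, so you do not obtain $M_1^{c_\ell}=M_2^{c_\ell}$. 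Worse, the strengthened hypothesis $M_1^{c}=M_2^{c}$ is simply \emph{false} once moduli repeat: with $R_1=\{(2,0),(6,1),(6,1),(6,3)\}$ and $R_2=\{(2,1),(6,0),(6,2),(6,2)\}$ one checks that $\mc{S}(R_i\cup R_i^-)$ is, for both $i$, the multiset in which every large positive integer appears with multiplicity $2$, yet $M_1^6\neq M_2^6$ and $M_1^2\neq M_2^2$. Hence the ``M\"obius-type inversion'' you invoke cannot exist; the system is genuinely underdetermined, and indeed the conclusion of Item~4, read literally with repeated moduli, already fails in this example.

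The paper's proof sidesteps all of this with a shorter ``strip and iterate'' argument. One looks only at a \emph{primitive} root $\zeta$ of the \emph{largest} modulus $a$, where only the $a$-block survives in the generating-function identity. Under the strict inequality \eqref{eq:strict-ineq} that block consists of a single pair $(a,b_1),(a,b_2)$, and the factorisation
\[
z^{b_1}+z^{-b_1}-z^{b_2}-z^{-b_2}=(z^{b_1}-z^{b_2})\bigl(1-z^{-(b_1+b_2)}\bigr)
\]
evaluated at $\zeta$ yields $b_1\pm b_2\in a\mathbb{Z}$ directly (this is the ``last paragraph of Item~3'' that the paper invokes for Item~4); one then cancels that matched pair from both sides and repeats with the next-largest modulus. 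Your framework can be salvaged under \eqref{eq:strict-ineq} in exactly this way---once the factorisation gives $b_{1}\equiv\pm b_{2}\bmod c_\ell$ at a single primitive root, $P_{c_\ell}$ vanishes on \emph{all} $c_\ell$-th roots automatically and the induction closes---but then the detour through multisets and Fourier inversion adds nothing over the paper's argument, and it cannot be pushed to the repeated-moduli generality you are implicitly aiming at.
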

\begin{proof}
    By Lemma \ref{lemma:acb-implies-equal}, we may assume that $\mc{S}(R_1) \ab \mc{S}(R_2)$. We now prove each Item separately. 
    \medskip

    \emph{Item 1.} Using the partition constructed in Lemma \ref{lemma:rational-property}, we may assume that there exists $x \in \mathbb{R}$ such that for $(a, 0) \in R_1 \cup R_2$, $a \in x\mathbb{Q}$. Therefore by re-scaling we may assume that the elements of $R_1 \cup R_2$ belong to $\mathbb{N} \times \{0\}$. Using the assumption and expansion into holomorphic series, we have
    \[
        \frac{1}{1 - z^{a_{1, 1}}} + \dotsb + \frac{1}{1 - z^{a_{1, k_1}}} = \frac{1}{1 - z^{a_{2, 1}}} + \dotsb + \frac{1}{1 - z^{a_{2, k_2}}}, \quad |z| < 1.
    \]
    Assume $a_{1, k_1} < a_{2, k_2}$ and that for $i = 1, 2$, $a_{i, k_i}$ appears with multiplicity $m_i$. Letting $z \to e^{\frac{2\pi i}{a_{2, k_2}}}$, we see that the left hand side remains bounded, while the right hand side diverges, contradiction. Similarly $a_{1, k_1} > a_{2, k_2}$ cannot hold, so $a_{1, k_1} = a_{2, k_2}$, and by the same argument $m_1 = m_2$. The claim follows upon iterating this procedure.
    \medskip

    \emph{Item 2.} Again, as in Item 1, using Lemma \ref{lemma:rational-property}, as well as translation and re-scaling, we may assume that the elements of $R_1 \cup R_2$ belong to $\mathbb{N} \times \mathbb{Z}$. We therefore obtain
    \[
        \frac{z^{b_{1, 1}}}{1 - z^{a_{1, 1}}} + \dotsb + \frac{z^{b_{1, k_1}}}{1 - z^{a_{1, k_1}}} = \frac{z^{b_{2, 1}}}{1 - z^{a_{2, 1}}} + \dotsb + \frac{z^{b_{2, k_2}}}{1 - z^{a_{2, k_2}}} + P(z), \quad |z| < 1,
    \]
    where $P(z)$ is a finite sum of terms of the form $z^K$, $K \in \mathbb{Z}$. Arguing as in Item 1, and using the assumption \eqref{eq:strict-ineq} to ensure there are no cancellations, we conclude that $a_{1, k_1} = a_{2, k_2}$ and that $z^{b_{1, k_1}} - z^{b_{2, k_2}}$ vanishes at $z = e^{\frac{2\pi i}{a_{1, k_1}}}$. Therefore also $b_{1, k_1} - b_{2, k_2} \in a_{1, k_1} \mathbb{Z}$. Iterating this procedure completes the proof.
    \medskip

    \emph{Item 3.} As in the preceding two items, using Lemma \ref{lemma:rational-property}, there is a partition $\cup_{j = 1}^k R_{i, j}$ of $R_i \cup R_i^-$ (for $i = 1, 2$), such that $\mc{S}(R_{1, j}) \ab \mc{S}(R_{2, j})$ and \eqref{eq:rational-property} holds. We might loose the structure of elements coming in pairs $(a, \pm b)$, however we have the following. If $(a, b), (c, d), (c, -d) \in R_{1, 1} \cup R_{2, 1}$, and $(a, -b) \in R_{1, j} \cup R_{2, j}$ for some $j \neq 1$, then we may consider $R_{1, 1} \cup R_{1, j}$ and $R_{2, 1} \cup R_{2, j}$, which satisfies \eqref{eq:rational-property}. By iterating this procedure, for $j = 1, \dotsc, k$ we may therefore assume that $R_{1, j} \cup R_{2, j}$ satisfy \eqref{eq:rational-property}, and that we are either in the setting of Item 2 (i.e. we cannot have $\{(a, b), (a, -b)\} \subset R_{i, j}$ if $(a, b) \in R_{i, j}$) or Item 3 (i.e. we have the symmetry $(a, b) \in R_{i, j}$ implies $\{(a, b), (a, -b)\} \subset R_{i, j}$). In the former case, we may apply Item 2 directly, so from now on we assume we are in the latter scenario. By re-scaling, we may assume that $(a, b), (c, d) \in R_1 \cup R_2$ implies that $a \in \mathbb{N}$, $b - d \in \mathbb{Z}$. Since $(a, b) \in R_1 \cup R_2$ implies $(a, -b) \in R_1 \cup R_2$, we get that $2b \in \mathbb{Z}$, so by another re-scaling we may assume that the elements of $R_1 \cup R_2$ belong to $\mathbb{N} \times \mathbb{Z}$.

    We may thus write
    \[
        \frac{z^{b_{1, 1}} + z^{-b_{1, 1}}}{1 - z^{a_{1, 1}}} + \dotsb + \frac{z^{b_{1, k_1}} + z^{-b_{1, k_1}}}{1 - z^{a_{1, k_1}}} = \frac{z^{b_{2, 1}} + z^{-b_{2, 1}}}{1 - z^{a_{2, 1}}} + \dotsb + \frac{z^{b_{2, k_2}} + z^{-b_{2, k_2}}}{1 - z^{a_{2, k_2}}} + P(z), \quad |z| < 1,
    \]
    where $P(z)$ is as in Item 2. Assume $a_{1, k_1} < a_{2, k_2}$ and let $z \to \zeta := e^{\frac{2\pi i}{a_{2, k_2}}}$. Write $a := a_{2, k_2}$ and $b := b_{2, k_2}$, and without loss of generality assume that $0 \leq b < a$. Then $z^{-b} + z^b$ vanishes at $z = \zeta$, so $\frac{4b}{a} = m \in \mathbb{Z}$. There are four cases: $m = 0, 1, 2, 3$. If $m = 0$, $\zeta^{b} + \zeta^{-b} \neq 0$, giving contradiction. If $m = 2$, then $\zeta^{2b} + 1 = 2 \neq 0$, contradiction. Therefore we are left to deal with $a = 4b$ and $3a = 4b$ cases, but these are excluded by assumption \eqref{eq:anomaly}. 
 
    The same argument for the case $a_{1, k_1} > a_{2, k_2}$, gives us that in fact $a_{1, k_1} = a_{2, k_2}$, and 
    \[
        z^{b_{1, k_1}} + z^{-b_{1, k_1}} - z^{b_{2, k_2}} - z^{-b_{2, k_2}} = (z^{b_{1, k_1}} - z^{b_{2, k_2}})(1 - z^{-(b_{1, k_1} + b_{2, k_2})})
    \]
    vanishes at $z = \zeta$. This shows immediately that either $b_{1, k_1} - b_{2, k_2} \in a_{1, k_1}\mathbb{Z}$ or $b_{1, k_1} + b_{2, k_2} \in a_{1, k_1} \mathbb{Z}$. We may therefore iterate this procedure and the main claim follows. This completes the proof.
    \medskip

    \emph{Item 4.} This follows from the discussion in Item 3 using the generating function approach, in particular its last paragraph.
\end{proof}

\begin{rem}\rm
    Lemma \ref{lemma:ap-uniqueness}, Item 1, was already shown in \cite[Lemmas 2.6 and 2.8]{Girouard-Parnovski-Polterovich-Sher-14} using a different approach. The above lemma gives a quick alternative proof of this fact.
\end{rem}

The result in Lemma \ref{lemma:ap-uniqueness}, Item 3, is sharp, in the sense of assumption \eqref{eq:anomaly}, that is if
\begin{equation}\label{eq:sharp-1/4}
    R_1 = \{(1, 0)\}, \quad R_2 = \{(2, 0), (4, 1), (6, 1), (12, 3)\},
\end{equation}
we have that $\mc{S}(R_1 \cup R_1^-) = \mc{S}(R_2 \cup R_2^-)$, however $R_1$ and $R_2$ satisfy the assumption \eqref{eq:strict-ineq}.

Next, we record a lemma for when a covering of $\mc{S}(\{(a, b)\} \cup \{(a, -b)\})$ by $\mc{S}(R \cup R^-)$ reduces to a covering of $\mc{S}(\{(a, b)\})$ by $\mc{S}(R)$.

\begin{lemma}\label{lemma:double-to-single-reduction}
    Let $R_1 = \{(a, b)\}$ and $R = \{(a_1, b_1), \dotsc, (a_k, b_k)\}$ be two generating multisets. Assume $\mc{S}(\{(a, b), (a, -b)\})$ and $\mc{S}(R \cup R^-)$ are in a close almost bijection. Assume moreover that $b, b - \frac{a}{2}, b \pm \frac{a}{4} \not \in a\mathbb{Z}$. Then, there exist $\varepsilon_1, \dotsc, \varepsilon_k \in \{\pm 1\}$, such that if $R' := \{(a_1, \varepsilon_1 b_1), \dotsc, (a_k, \varepsilon_k b_k)\}$, $\mc{S}(\{(a, b)\}) \ab \mc{S}(R')$.
\end{lemma}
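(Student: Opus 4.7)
The strategy is to first promote the close almost bijection to an almost equality via Lemma \ref{lemma:acb-implies-equal}, and then to show that each element $(a_j, b_j) \in R$ ``belongs to exactly one side'' of the decomposition $\mc{S}(\{(a, b), (a, -b)\})$. From $\mc{S}(\{(a, b), (a, -b)\}) \ab \mc{S}(R \cup R^-)$ each progression $\mc{S}(a_j, b_j)$ lies, up to finite error, inside $\mc{S}(\{(a, b)\}) \cup \mc{S}(\{(a, -b)\})$. Applying Lemma \ref{lemma:N-cai} after translating and rescaling $\mc{S}(a_j, b_j)$ to $\mathbb{N}$ then forces $a_j/a \in \mathbb{Q}$, so we may write $a_j/a = p_j/q_j$ in lowest terms with $p_j, q_j \in \mathbb{Z}_{>0}$.

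The heart of the argument is an orbit analysis modulo $a$. Since $\gcd(p_j, q_j) = 1$, the set $\{b_j + i a_j \bmod a : i \in \mathbb{Z}\}$ consists of exactly $q_j$ distinct residues modulo $a$, and it must be contained in $\{b, -b\} \bmod a$. Under the hypotheses $b, b - a/2 \not\in a\mathbb{Z}$ we have $2b \not\in a\mathbb{Z}$, so $\{b, -b\} \bmod a$ is a genuine $2$-element set and hence $q_j \leq 2$. The main obstacle is ruling out the case $q_j = 2$, which is precisely where the remaining assumptions $b \pm a/4 \not\in a\mathbb{Z}$ are needed: if $q_j = 2$, then $p_j$ is odd and $a_j \equiv a/2 \pmod a$, so the orbit reduces to $\{b_j, b_j + a/2\} \bmod a$; equating this with $\{b, -b\} \bmod a$ forces $\pm 2b \equiv a/2 \pmod a$, i.e. $b \equiv \pm a/4 \pmod a$, contradicting the hypothesis. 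Therefore $q_j = 1$, so $a \mid a_j$, and $b_j \equiv \varepsilon_j b \pmod a$ for a \emph{unique} sign $\varepsilon_j \in \{\pm 1\}$ (the uniqueness again using $2b \not\in a\mathbb{Z}$).

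Finally, define $R' := \{(a_j, \varepsilon_j b_j)\}_{j=1}^k$. By construction $a \mid a_j$ and $\varepsilon_j b_j \equiv b \pmod a$, so $\mc{S}(R') \subset \mc{S}(\{(a, b)\})$ up to finitely many terms. The elementary identity $R' \cup (R')^- = R \cup R^-$ as multisets (just relabeling each pair) yields $\mc{S}(R') \cup \mc{S}((R')^-) = \mc{S}(R \cup R^-) \ab \mc{S}(\{(a, b)\}) \cup \mc{S}(\{(a, -b)\})$. Together with the symmetric inclusion $\mc{S}((R')^-) \subset \mc{S}(\{(a, -b)\})$ and the fact that $\mc{S}(\{(a, b)\})$ and $\mc{S}(\{(a, -b)\})$ are disjoint past finitely many elements (once more by $2b \not\in a\mathbb{Z}$), intersecting with $\mc{S}(\{(a, b)\})$ forces $\mc{S}(R') \ab \mc{S}(\{(a, b)\})$, which completes the proof.
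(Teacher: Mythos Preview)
Your proof is correct and follows essentially the same approach as the paper's: reduce to almost equality via Lemma \ref{lemma:acb-implies-equal}, use Lemma \ref{lemma:N-cai} to force $a_j/a \in \mathbb{Q}$, analyze residues modulo $a$ to show the denominator $q_j$ must equal $1$ (using the hypotheses on $b$ to exclude $q_j = 2$), and then conclude by disjointness of $\mc{S}(\{(a,b)\})$ and $\mc{S}(\{(a,-b)\})$. The only cosmetic difference is that the paper rules out $q_j \geq 3$ via a pigeonhole on the signs $\varepsilon(0),\varepsilon(1),\varepsilon(2)$, whereas you directly count the orbit of residues as having size $q_j \leq |\{b,-b\} \bmod a| = 2$; these are the same observation.
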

\begin{proof}
    By Theorem \ref{lemma:acb-implies-equal}, we may assume $\mc{S}(\{(a, b), (a, -b)\}) \ab \mc{S}(R \cup R^-)$. By re-scaling, we may assume $a = 1$; we may also assume that $b \in [0, 1)$, and $b \neq 0, \frac{1}{4}, \frac{1}{2}, \frac{3}{4}$ by assumption. 
    
    By Lemma \ref{lemma:N-cai}, for $j = 1, \dotsc, k$, we have $a_j = \frac{p_j}{q_j} \in \mathbb{Q}$, where $p_j, q_j \in \mathbb{Z}_{>0}$ are coprime. We now fix $j$, and so for all $r \in \mathbb{Z}$ 
    \[
        \frac{r p_j}{q_j} + b_j \equiv \varepsilon(r) b \mod \mathbb{Z},
    \]
    for some $\varepsilon(r) \in \{\pm 1\}$. We now distinguish cases according to the value of $q_j$.
    \medskip
    
    \emph{Case $q_j \geq 3$.} Then among $\varepsilon(0), \varepsilon(1), \varepsilon(2)$ two have the same sign, and thus $\frac{i}{q_j} \equiv 0 \mod \mathbb{Z}$ for some $i \in \{1, 2\}$, contradiction.
    \medskip
    
    \emph{Case $q_j = 2$.} Then we get
    \[
            b_j \equiv \varepsilon(0) b, \quad \frac{p_j}{2} + b_j \equiv \varepsilon(1) b \mod \mathbb{Z},
    \]
    where $\varepsilon(0)$ and $\varepsilon(1)$ have different signs. It follows that
    \[
        \frac{p_j}{2} \equiv \pm 2b \mod \mathbb{Z},
    \]
    and so that $b \pm \frac{1}{4} \in \mathbb{Z}$, contradicting our assumption.
    \medskip
    
    We therefore may assume that for all $j = 1, \dotsc, k$, $q_j = 1$, and that $b_j \equiv \varepsilon_j b \mod \mathbb{Z}$ for some $\varepsilon_j \in \{\pm 1\}$. By assumption, $b \not \equiv 0, \frac{1}{2} \mod \mathbb{Z}$, so $\mathbb{N} + b$ and $\mathbb{N} - b$ are disjoint, and also $a_j \mathbb{N} + b_j$ intersects precisely one of $\mathbb{N} \pm b$. This immediately implies the conclusion and completes the proof.
\end{proof}

The condition that $b \pm \frac{a}{4} \not \in a \mathbb{Z}$ in the preceding lemma is optimal, as can be seen from the following example
\[
    R_1 = \left\{\left(1, \frac{1}{4}\right)\right\}, \quad R_2 = \left\{\left(\frac{3}{2}, \frac{1}{4}\right), \left(3, \frac{3}{4}\right)\right\}.
\]
Indeed, we have $\mc{S}(R_1 \cup R_1^-) = \mc{S}(R_2 \cup R_2^-)$, but to cover $\mathbb{N} + \frac{1}{4}$, we would need to use $\frac{3}{2}\mathbb{N} + \frac{1}{4}$ as it contains the refinement $3\mathbb{N}+ \frac{1}{4}$, but this is impossible because $\frac{3}{2}\mathbb{N} + \frac{1}{4}$ also contains as a subset the refinement $3\mathbb{N} + \frac{7}{4}$.

\begin{rem}\rm
At the time of writing of this article, it is not clear whether the condition $b, b - \frac{a}{2} \in a \mathbb{Z}$ is optimal or not, i.e. in this situation to find an example where the claim of Lemma \ref{lemma:double-to-single-reduction} is false.
\end{rem}

\subsection{Uniqueness for small generating multisets}\label{ssec:small} We end the discussion of uniqueness for $\mc{S}(R)$ by studying cases with a small cardinality of $R$.

\begin{prop}\label{prop:ap-low-dim-uniqueness}
    For $i = 1, 2$, let $R_i = \{(a_{i,1}, b_{i,1}), \dotsc, (a_{i, k_i}, b_{i, k_i})\}$ be two generating multisets, and for all $j$ we have $b_{i, j} \in [0, a_{i, j})$. Assume that $\mc{S}(R_1 \cup R_1^-) \ab \mc{S}(R_2 \cup R_2^-)$. Then, up to changing $b_{2, j}$ with $a_{2, j} - b_{2, j} \mod a_{2, j}$, and up to swapping indices, we have
    \begin{enumerate}[itemsep=5pt, label*=\arabic*.]
        \item If $k_1 = 1$ and $k_2 = 2$, then either: 
            \begin{enumerate}
                \item $a_{2, 1} = a_{2, 2} = 2a_{1, 1}$. Moreover, $b_{2, 1} = b_{1, 1}$, and $b_{2, 2} = b_{1, 1} + a_{1, 1}$;
                \item $a_{2, 1} = \frac{3}{2} a_{1, 1}$, and $a_{2, 2} = 3 a_{1, 1}$. Moreover, $b_{1, 1} = b_{2, 1} = \frac{a_{1, 1}}{4}$, and $b_{2, 2} = a_{1, 1}$.
            \end{enumerate}

        \item If $k_1 = 1$ and $k_2 = 3$, then either:
            \begin{enumerate}
                \item $a_{2, 1} = a_{2, 2} = a_{2, 3} = 3 a_{1, 1}$. Moreover, $b_{2, 1} = b_{1, 1}$, $b_{2, 2} = b_{1, 1} + a_{1, 1}$, and $b_{2, 3} = b_{1, 1} + 2a_{1, 1}$.
                \item $a_{2, 1} = 2a_{1, 1}$, and $a_{2, 2} = a_{2, 3} = 4a_{1, 1}$. Moreover, either $b_{2, 1} = b_{1, 1}$, $b_{2, 2} = b_{1, 1} + a_{1, 1}$, $b_{2, 3} = b_{1, 1} + 3a_{1, 1}$; or $b_{2, 1} = b_{1, 1} + a_{1, 1}$, $b_{2, 2} = b_{1, 1}$, $b_{2, 3} = b_{1, 1} + 2a_{1, 1}$.
                \item $a_{2, 1} = \frac{3}{2} a_{1, 1}$, and $a_{2, 2} = a_{2, 3} = 6a_{1, 1}$. Moreover, $b_{1, 1} = b_{2, 1} = \frac{a_{1, 1}}{4}$, $b_{2, 2} = \frac{9a_{1, 1}}{4}$, $b_{2, 3} = \frac{21a_{1, 1}}{4}$.
            \end{enumerate}

    \end{enumerate}
\end{prop}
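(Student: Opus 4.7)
Dividing every modulus and offset by $a_{1,1}$, I first reduce to the normalized case $a_{1,1} = 1$, in which $\mc{S}(R_1 \cup R_1^-)$ sits inside $(\mathbb{Z} + b_{1,1}) \cup (\mathbb{Z} - b_{1,1})$. Applying Lemma~\ref{lemma:N-cai} to each of the two arithmetic progressions composing the left-hand side forces every $a_{2,j} \in \mathbb{Q}_{>0}$; write $a_{2,j} = p_j/q_j$ in lowest terms. The key structural observation is that any refinement of $(a_{2,j}, b_{2,j})$ at step $p_j$ must lie entirely inside one of $\mathbb{N} \pm b_{1,1}$, so the $q_j$ residues $b_{2,j} + i/q_j \bmod 1$ (for $0 \leq i < q_j$) must all lie in the at-most-two-element set $\{\pm b_{1,1}\} \bmod 1$. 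This pins down $q_j \leq 2$, and $q_j = 2$ is possible only when the two cosets $\pm b_{1,1}$ differ by $\tfrac12$ modulo $1$, i.e.\ $b_{1,1} \in \{\tfrac14, \tfrac34\}$. Comparing asymptotic densities of the two sides also yields $\sum_{j=1}^{k_2} 1/a_{2,j} = 1$.

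\textbf{Case enumeration and offset matching.} Under the constraints of the previous step (each $a_{2,j} \geq 1$ rational with denominator at most $2$), the Diophantine-type equation $\sum_j 1/a_{2,j} = 1$ has only finitely many ordered solutions when $k_2 \in \{2, 3\}$. For $k_2 = 2$ one finds $(a_{2,1}, a_{2,2}) \in \{(2,2), (3/2, 3)\}$, matching the two sub-cases of Item~1; for $k_2 = 3$ the analogous enumeration yields $(3,3,3)$, $(2,4,4)$, and $(3/2, 6, 6)$, as in Item~2. For each fixed moduli tuple, I would refine every element of $R_2 \cup R_2^-$ to the common step $N := \lcm_j(a_{2,j})$ and match the resulting finite multiset of offsets in $[0, N)$ against those of $\mc{S}(R_1 \cup R_1^-)$ computed at step $N$; equivalently, one can invoke the generating function method of Lemma~\ref{lemma:ap-uniqueness} and read off constraints from the pole residues at $N$-th roots of unity. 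In each sub-case this pins down the offsets $b_{2,j}$ up to the stated equivalences $b \leftrightarrow a_{2,j} - b$ and permutation of indices.

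\textbf{Main obstacle.} The bulk of the work lies in the combinatorial bookkeeping at the offset-matching step, particularly in the fractional cases with $q_j = 2$, where refinements from a single pair $(a_{2,j}, \pm b_{2,j})$ land in both cosets $\pm b_{1,1}$ and the $b \leftrightarrow -b$ symmetry combines nontrivially. One also has to verify exhaustively that no further moduli tuples solve the density equation, and check that within each case the $N$-periodic equation determining the offsets admits only the listed solutions (up to the permitted symmetries); this verification is routine for $k_2 = 2$ but rapidly becomes intricate for $k_2 = 3$.
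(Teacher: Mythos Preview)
Your overall strategy is the same as the paper's: rescale to $a_{1,1}=1$, argue that each $a_{2,j}$ has denominator $q_j\le 2$ via the residue-mod-$1$ observation (the paper packages this as ``the proof of Lemma~\ref{lemma:double-to-single-reduction}''), derive the density identity $\sum_j 1/a_{2,j}=1$, enumerate moduli tuples, and then match offsets.

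The gap is in your enumeration for $k_2=3$. The equation $\sum_{j=1}^{3} 1/a_{2,j}=1$ with each $a_{2,j}\in\tfrac12\mathbb{Z}_{\ge 3}$ has \emph{ten} ordered solutions, not three: besides $(3,3,3)$, $(2,4,4)$, $(\tfrac32,6,6)$ you also get $(2,3,6)$, $(2,\tfrac52,10)$, $(\tfrac52,\tfrac52,5)$, $(\tfrac32,4,12)$, $(\tfrac32,\tfrac92,9)$, $(\tfrac32,5,\tfrac{15}{2})$, $(\tfrac32,\tfrac72,21)$. The paper lists all ten (as $(p_1,p_2,p_3)$ with $a_{2,j}=p_j/2$) and then eliminates the seven extra tuples in one stroke: in each of them the three moduli are pairwise distinct, and the generating-function pole argument from the proof of Lemma~\ref{lemma:ap-uniqueness}, Item~3, then forces $k_1=k_2$, a contradiction. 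Your proposal never flags these seven cases, so either your ``exhaustive verification'' sentence would have to absorb a substantial amount of additional offset-matching (seven separate case analyses, each at a fairly large common step $N$), or you need the paper's shortcut via Lemma~\ref{lemma:ap-uniqueness}. Either way, the claim that the enumeration ``yields $(3,3,3)$, $(2,4,4)$, and $(3/2,6,6)$'' is incorrect as stated.
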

\begin{proof}
    In Items 1, and 2, by re-scaling, we may assume that $a_{1, 1} = 1$ and $b_{1, 1} \in [0, 1)$. By the proof of Lemma \ref{lemma:double-to-single-reduction}, we may assume that $a_{2, j} = \frac{p_{2, j}}{2}$, for some $p_{2, j} \in \mathbb{N}$. Then
    \begin{equation}\label{eq:intermediate}
        \sum_{j = 1}^{k_2} \frac{2}{p_{2, j}} = 1.
    \end{equation}
    Moreover, if one of $p_{2, j}$ is odd, we must have $b_{1, 1} = \frac{1}{4}$ or $b_{1, 1} = \frac{3}{4}$; if this is the case, we may assume $b_{1, 1} = \frac{1}{4}$ without loss of generality.
    \medskip
    
    \emph{Item 1.} The only solutions to \eqref{eq:intermediate} (up to swapping indices) are $p_{2, 1} = p_{2, 2} = 4$, or $p_{2, 1} = 3$, $p_{2, 2} = 6$. In the latter case we have 
    \begin{align*}
        &\left(\mathbb{N} + \frac{1}{4}\right) \cup \left(\mathbb{N} - \frac{1}{4}\right) \ab \left(\frac{3}{2}\mathbb{N} + b_{2, 1}\right) \cup \left(\frac{3}{2}\mathbb{N} - b_{2, 1}\right) \cup \left(3\mathbb{N} + b_{2, 2}\right) \cup \left(3\mathbb{N} - b_{2, 2}\right)\\
        &\ab \left(3\mathbb{N} + b_{2, 1}\right) \cup \left(3\mathbb{N} + \tfrac{3}{2} + b_{2, 1}\right) \cup \left(3\mathbb{N} - b_{2, 1}\right) \cup \left(3\mathbb{N} - \tfrac{3}{2} - b_{2, 1}\right) \cup \left(3\mathbb{N} + b_{2, 2}\right) \cup \left(3\mathbb{N} - b_{2, 2}\right).
    \end{align*}
    It follows that (up to symmetries)
    \[
        b_{2, 1} \equiv \frac{1}{4} \mod \mathbb{Z}, \quad \frac{3}{2} + b_{2, 1} \equiv - \frac{1}{4} \mod \mathbb{Z}, \quad b_{2, 2} \equiv \frac{1}{4} \mod \mathbb{Z},
    \]
    and so $b_{2, 1} \in \{\frac{1}{4}, \frac{5}{4}\}$, $b_{2, 2} \in \{1, 2\}$. 

    In the former case, we have
    \[
        \left(\mathbb{N} + b_{1, 1}\right) \cup \left(\mathbb{N} - b_{1, 1}\right) \ab \left(2\mathbb{N} + b_{2, 1}\right) \cup \left(2\mathbb{N} - b_{2, 1}\right) \cup \left(2\mathbb{N} + b_{2, 2}\right) \cup \left(2\mathbb{N} - b_{2, 2}\right).
    \]
    and it is straightforward to check that the only cases are as claimed above.
    \medskip

    \emph{Item 2.} The only solutions to \eqref{eq:intermediate} (up to swapping indices) are
    \begin{align*}
        (p_1, p_2, p_3) \in \{(3, 7, 42), (3, 8, 24), (3, 9, 18), (3, 10, 15), (3, 12, 12), (4, 5, 20), (4, 6, 12), (4, 8 , 8),\\ 
        \hspace{5cm} (5, 5, 10), (6, 6, 6)\}.
    \end{align*}
    We may discard many of those as follows. Notice first that if we have one or more of $p_i$ odd, we may either re-scale by a factor of two, or replace the arithmetic progressions with half-integer moduli by two arithmetic progressions with twice as large moduli. Thus, by Lemma \ref{lemma:ap-uniqueness}, Item 3, and its proof, we see that we are left with 
    \[
        (p_1, p_2, p_3) \in \{(3, 12, 12), (4, 8 , 8), (6, 6, 6)\}.
    \]
    We are left to consider each case separately. If $(p_1, p_2, p_3) = (3, 12, 12)$, then $b_{1, 1} = \frac{1}{4}$ and
    \begin{align*}
        &\left(\mathbb{N} + \frac{1}{4}\right) \cup \left(\mathbb{N} - \frac{1}{4}\right)\\ 
        &\ab \left(\frac{3}{2}\mathbb{N} + b_{2, 1}\right) \cup \left(\frac{3}{2}\mathbb{N} - b_{2, 1}\right) \cup \left(6\mathbb{N} + b_{2, 2}\right) \cup \left(6\mathbb{N} - b_{2, 2}\right) \cup \left(6\mathbb{N} + b_{2, 3}\right) \cup \left(6\mathbb{N} - b_{2, 3}\right).
    \end{align*}
    Arguing as in Item 1, we see that (up to symmetries) 
    \[
        b_{2, 1} = \frac{1}{4},\quad b_{2, 2} = \frac{9}{4}, \quad b_{2, 3} = \frac{21}{4}.
    \]

    Next, in the case $(p_1, p_2, p_3) = (4, 8, 8)$, we have
    \begin{align*}
        &\left(\mathbb{N} + b_{1, 1}\right) \cup \left(\mathbb{N} - b_{1, 1}\right)\\ 
        &\ab \left(2\mathbb{N} + b_{2, 1}\right) \cup \left(2\mathbb{N} - b_{2, 1}\right) \cup \left(4\mathbb{N} + b_{2, 2}\right) \cup \left(4\mathbb{N} - b_{2, 2}\right) \cup \left(4\mathbb{N} + b_{2, 3}\right) \cup \left(4\mathbb{N} - b_{2, 3}\right).
    \end{align*}
    Therefore we immediately see that either $b_{2, 1} = b_{1, 1}$ or $b_{2, 1} = b_{1, 1} + 1$. In each of the two possibilities, the case study reduces to Item 1, and the result follows.

    Finally, if $(p_1, p_2, p_3) = (6, 6, 6)$, we have
        \begin{align*}
        &\left(\mathbb{N} + b_{1, 1}\right) \cup \left(\mathbb{N} - b_{1, 1}\right)\\ 
        &\ab \left(3\mathbb{N} + b_{2, 1}\right) \cup \left(3\mathbb{N} - b_{2, 1}\right) \cup \left(3\mathbb{N} + b_{2, 2}\right) \cup \left(3\mathbb{N} - b_{2, 2}\right) \cup \left(3\mathbb{N} + b_{2, 3}\right) \cup \left(3\mathbb{N} - b_{2, 3}\right).
    \end{align*}
    We see that (up to symmetries) $b_{2, 1} = b_{1, 1}$, $b_{2, 2} = 1 + b_{1, 1}$, $b_{2, 3} = 2 + b_{1, 1}$. This completes the proof.
\end{proof}

\begin{rem}\rm
    The next natural case in Proposition \ref{prop:ap-low-dim-uniqueness} would be $k_1 = k_2 = 2$. In this situation a similar analysis by hand is possible, but for simplicity of presentation we do not give the details. In general, from Proposition \ref{prop:ap-low-dim-uniqueness} we easily deduce an algorithm which could describe all possible $R_1$ and $R_2$ with $\mc{S}(R_1 \cup R_1^-) \ab \mc{S}(R_2 \cup R_2^-)$ (at least when $k_1 = 1$).
\end{rem}

\subsection{Covering systems.}\label{ssec:covering-systems} Let us introduce some more terminology about arithmetic progressions. We will call a set of pairs of positive integers $(a_i, b_i)$, for $i = 1, \dotsc, k$ a \emph{covering system (CS)}, if for every integer $x \in \mathbb{Z}$ there is at least one $i$ with $x \equiv b_i \mod a_i$. If this happens for exactly one $i$, then the set $(a_i, b_i)$ is called an \emph{exact covering system (ECS)}, and if all $a_i$ are distinct a \emph{distinct covering system (DCS)}. An ECS which appears after dividing integers into a finite number of arithmetic progressions of equal steps, then dividing one of these into further arithmetic progressions, and so on, is called a \emph{natural exact covering system (NECS)}.

These systems are thoroughly studied -- see \cite[Chapter 1]{Soifer-24} for a thorough historical discussion and \cite{Porubsky-81} for a survey, and remain an active field of research to this day \cite{Balister-Bollobas-Morris-Sahasrabudhe-Tiba-22, Filaseta-Ford-Konyagin-Pomerance-Yu-07, Hough-15}. It is well-known that the biggest moduli in an ECS needs to appear at least twice \cite{Porubsky-81}, so in particular there is no ECS which is also a DCS. Also, not all ECS are NECS, see e.g. \cite{Porubsky-76, Schnabel-15}. For a relation of the covering problem with non-vanishing sums of roots of unity, see \cite{Schnabel-15}.

\section{Unique determination}\label{sec:unique-determination}
In this section we discuss how to recover the number and the lengths of boundary components, parallel transport and magnetic flux along boundary, and other invariants of the magnetic DN map from its spectrum. The main analytical input is Theorem \ref{thma}, and we apply our study of arithmetic progressions through close almost bijections developed in Section \ref{sec:ap}. We assume that $(M, g)$ is a compact Riemannian surface, that $E = M \times \mathbb{C}$ is equipped with a purely imaginary $1$-form $A$, and a real-valued potential $q$, as well as that $\partial M$ has $m \geq 1$ boundary components $N_1, \dotsc, N_m$. 

\subsection{One boundary component}\label{ssec:uniqueness-single}
Here we consider the simplest case of a single boundary component, that is, $m = 1$ and extract information from the spectrum of the DN map. We define $p \in \mathbb{Z}$ and $\alpha \in [0, 1)$ such that
\[
    p + \frac{1}{2\pi i} \int_{\partial M} A = \alpha.
\]
We may now establish Theorem \ref{thmb}.
\begin{proof}[Proof of Theorem \ref{thmb}]
    Enumerate the spectrum of $\Lambda_{g, A, q}$ in the non-decreasing order by $(\sigma_n)_{n \in \mathbb{N}}$. We split the discussion to cases depending on the value of $\alpha$. 
    \medskip
    
    \emph{Case 1: $\alpha \in (0, \frac{1}{2})$.} Then Theorem \ref{thma} implies that for $n$ large enough
    \begin{align*}
        \sigma_{2n} &= \frac{2\pi}{\ell(\partial M)} \left(n - \alpha\right) + n^{-1} \left(-\frac{i}{4\pi} \int_{\partial M} \iota_{\nu} dA + \frac{1}{4\pi} \int_{\partial M} q\right) + \mc{O}(n^{-2}),\\
        \sigma_{2n + 1} &= \frac{2\pi}{\ell(\partial M)} \left(n + \alpha\right) + n^{-1} \left(\frac{i}{4\pi} \int_{\partial M} \iota_{\nu} dA + \frac{1}{4\pi} \int_{\partial M} q\right) + \mc{O}(n^{-2}),
    \end{align*}
    as $n \to \infty$. We therefore see that
    \begin{align}\label{eq:uniqueness-limits}
    \begin{split}
        &\lim_{n \to \infty} \frac{\sigma_{2n}}{n} = \frac{2\pi}{\ell(\partial M)}, \quad \lim_{n \to \infty} \left(\sigma_{2n} - \frac{2\pi}{\ell(\partial M)}n\right) = -\frac{2\pi}{\ell(\partial M)}\alpha,\\ 
        &\lim_{n \to \infty} n \left(\sigma_{2n} - \frac{2\pi}{\ell(\partial M)}(n - \alpha)\right) = -\frac{i}{4\pi} \int_{\partial M} \iota_\nu dA + \frac{1}{4\pi} \int_{\partial M} q,\\
        &\lim_{n \to \infty} n \left(\sigma_{2n + 1} - \frac{2\pi}{\ell(\partial M)}(n  + \alpha) \right) = \frac{i}{4\pi} \int_{\partial M} \iota_\nu dA + \frac{1}{4\pi} \int_{\partial M} q.
    \end{split}
    \end{align}
    The first limit shows that the spectrum determines $\ell(\partial M)$, the second one that it determines $\alpha$, while the last two after summing and subtracting determine $\int_{\partial M} \iota_\nu dA$ and $\int_{\partial M} q$.
    \medskip

    \emph{Case 2: $\alpha \in (\frac{1}{2}, 1)$.} By the same reasoning as in Case 1, we have for $n$ large enough
    \begin{align*}
        \sigma_{2n} &= \frac{2\pi}{\ell(\partial M)} \left(n - 1 + \alpha + n^{-1} \left(\frac{i}{4\pi} \int_{N_j} \iota_{\nu} dA + \frac{1}{4\pi} \int_{\partial M} q\right)\right) + \mc{O}(n^{-2}),\\
        \sigma_{2n + 1} &= \frac{2\pi}{\ell(\partial M)} \left(n + 1 - \alpha + n^{-1} \left(-\frac{i}{4\pi} \int_{\partial M} \iota_{\nu} dA + \frac{1}{4\pi} \int_{\partial M} q\right)\right) + \mc{O}(n^{-2}),
    \end{align*}
    as $n \to \infty$. Applying the same arguments as in Case 1, we see that we may determine $1 - \alpha$ instead of $\alpha$ and $-\int_{\partial M} \iota_\nu dA$ instead of $\int_{\partial M} \iota_\nu dA$ in the same fashion. (Alternatively, we could have obtained these formulas by using that $\Lambda_{g, A, q}$ and $\Lambda_{g, -A, q}$ have the same spectrum according to Proposition \ref{prop:symmetry}, and then applying Step 1.)
    \medskip

    \emph{Case 3: $\alpha = 0$.} In the case $i \int \iota_\nu dA > 0$, we similarly to above cases have for $n$ large enough that
    \begin{align*}
        \sigma_{2n} &= \frac{2\pi}{\ell(\partial M)} \left(n + n^{-1} \left(-\frac{i}{4\pi} \int_{\partial M} \iota_{\nu} dA + \frac{1}{4\pi} \int_{\partial M} q\right)\right) + \mc{O}(n^{-2}),\\
        \sigma_{2n + 1} &= \frac{2\pi}{\ell(\partial M)} \left(n + n^{-1} \left(\frac{i}{4\pi} \int_{\partial M} \iota_{\nu} dA + \frac{1}{4\pi} \int_{\partial M} q\right)\right) + \mc{O}(n^{-2}),
    \end{align*}
    as $n \to \infty$, and using the formulas as in Case 1, we may uniquely determine the claimed quantities. 

    In the case $i \int \iota_\nu dA < 0$ we get
    \begin{align*}
        \sigma_{2n} &= \frac{2\pi}{\ell(\partial M)} \left(n + n^{-1} \left(\frac{i}{4\pi} \int_{\partial M} \iota_{\nu} dA + \frac{1}{4\pi} \int_{\partial M} q\right)\right) + \mc{O}(n^{-2}),\\
        \sigma_{2n + 1} &= \frac{2\pi}{\ell(\partial M)} \left(n + n^{-1} \left(-\frac{i}{4\pi} \int_{\partial M} \iota_{\nu} dA + \frac{1}{4\pi} \int_{\partial M} q\right)\right) + \mc{O}(n^{-2}),
    \end{align*}
    and the same argument applies.

    Finally, if $\int \iota_\nu dA = 0$, then it is possible to split as in the two preceding scenarios according to whether $\int_{\partial M} q$ is positive or negative. If $\int_{\partial M} q = 0$, then we see that
    \begin{align*}
        \sigma_{2n} &= \frac{2\pi}{\ell(\partial M)}n + \mc{O}(n^{-2}),\\
        \sigma_{2n + 1} &= \frac{2\pi}{\ell(\partial M)}n + \mc{O}(n^{-2}),
    \end{align*}
    so all the limits in \eqref{eq:uniqueness-limits} are zero.
    \medskip

    \emph{Case 4: $\alpha = \frac{1}{2}$.} The discussion in this case is analogous to Case 3, and we omit the details.
    \medskip
    
    Combining all the above cases, this implies the claim of the proposition and completes the proof.
\end{proof}

\begin{rem}\rm
    That we can recover only $e^{\pm \int_{\partial M} A}$ and $\left|\int_{\partial M} \iota_{\nu} dA\right|$ is actually the most we can do from the first three terms. Indeed, by Proposition \ref{prop:symmetry}, we have $\Lambda_{g, -A, Q} = C \Lambda_{g, A, q} C$ and so the Steklov spectrum cannot distinguish between $A$ and $-A$.
\end{rem}

\smallskip

\subsection{Uniqueness: general results}\label{ssec:uniqueness-general} We start with the following lemma about the spectrum of the magnetic DN map. Let $(M, g)$ be a compact Riemannian surface with $m$ boundary components $N_1, \dotsc, N_m$, let $A$ be a purely imaginary $1$-form, and $q$ a real-valued potential, and let $(p_j)_{j = 1}^m \in \mathbb{Z}^m$. We recall the notation
    \[  
        R^\pm(M, g, A, q) := \bigcup_{j = 1}^{m} \left\{\left(\frac{2\pi}{\ell(N_j)}, \pm\frac{2\pi}{\ell(N_j)}\left(p_j + \frac{1}{2\pi i} \int_{N_j} A\right)\right)\right\}.
    \]
We note that $(p_j)_{j = 1}^m$ will be implicit in our notation and its value clear from the context.    
\begin{lemma}\label{lemma:spectrum-ap-close-bijection}
    We use the same notation and assumptions as in Theorem \ref{thma}. Then, there exists a close almost bijection $F$
    \[
        F: \mc{S}(R^+ \cup R^-) \to \Spec(\Lambda_{g, A, q}).
    \]
\end{lemma}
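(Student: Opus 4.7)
The plan is to define $F$ using the natural pairing suggested by Theorem \ref{thma}, verify the closeness property using the $k_0 = 1$ case of the asymptotic expansion, and then verify the almost bijection property by noting that both multisets are canonically parametrised by $\{\pm 1\} \times \{1, \dotsc, m\} \times \mathbb{N}$ up to finitely many elements.

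First, introduce the notation $a_j := \tfrac{2\pi}{\ell(N_j)}$ and $b_j := a_j(p_j + \tfrac{1}{2\pi i}\int_{N_j} A)$, so that $R^\pm = \bigcup_{j = 1}^m \{(a_j, \pm b_j)\}$. For fixed $j$ and $\epsilon \in \{\pm 1\}$, the pair $(a_j, \epsilon b_j)$ generates the arithmetic progression $\{a_j n + \epsilon b_j\}_{n \in \mathbb{N}}$, whose $n$-th element I define to be sent by $F$ to $\lambda_{\epsilon n}^{(j)}$. This is well defined for $n$ sufficiently large (depending on $j$), and I may extend $F$ arbitrarily on the remaining finitely many elements of $\mc{S}(R^+ \cup R^-)$ so that it becomes a bijection onto a cofinite sub-multiset of $\Spec(\Lambda_{g, A, q})$ — possibly after discarding finitely many elements from $\Spec(\Lambda_{g, A, q})$ as well.

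Closeness is immediate from the $k_0 = 1$ case of Theorem \ref{thma}: the displayed formulas for $b_0^{(j)}(\pm 1)$ and $b_1^{(j)}(\pm 1)$ give, as $n \to +\infty$,
\[
    \lambda_{\epsilon n}^{(j)} - (a_j n + \epsilon b_j) = \mathcal{O}(n^{-1}),
\]
so $|F(x) - x| \to 0$ as $x \to \infty$ along each of the $2m$ arithmetic progressions. Since there are only finitely many such progressions, this gives the closeness globally.

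For the almost bijection property, observe that by Theorem \ref{thma} the multiset $\Spec(\Lambda_{g, A, q})$ is literally equal to the disjoint union (over $j$) of the multisets $\mc{S}_j$, and each $\mc{S}_j$ is enumerated as $(\lambda_n^{(j)})_{n \in \mathbb{Z}}$. Hence for a value $y \in \Spec(\Lambda_{g, A, q})$ large enough, its preimages in the triple index set $\{\pm 1\} \times \{1, \dotsc, m\} \times \mathbb{N}$ under $(\epsilon, j, n) \mapsto \lambda_{\epsilon n}^{(j)}$ correspond, one-to-one, to the occurrences of $y$ in the multiset $\Spec(\Lambda_{g, A, q})$; exactly the same triples index occurrences of the preimage of $y$ in $\mc{S}(R^+ \cup R^-)$ via $(\epsilon, j, n) \mapsto a_j n + \epsilon b_j$. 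Thus $F^{-1}(y)$ is a single point for $y$ large, as required. The only point that needs care is the possibility that the same AP $(a, b)$ arises from several $(j, \epsilon)$ — but this is precisely what makes $R^+ \cup R^-$ a multiset with the correct multiplicities, so the matching above still produces a bijection. I do not anticipate any serious obstacle; the lemma is essentially a bookkeeping translation of Theorem \ref{thma} into the language of close almost bijections.
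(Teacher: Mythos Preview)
Your proposal is correct and follows essentially the same approach as the paper: define $F$ by the natural pairing $a_j n + \epsilon b_j \mapsto \lambda^{(j)}_{\epsilon n}$, invoke the $k_0 = 1$ expansion of Theorem \ref{thma} for closeness, and use the multiset decomposition $\Spec(\Lambda_{g,A,q}) = \bigcup_j \mc{S}_j$ for the almost bijection. If anything, you are more explicit than the paper about the bookkeeping (the finitely many leftover elements, and the multiplicity issue when the same arithmetic progression arises from several $(j,\epsilon)$), which is fine.
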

\begin{proof}
    For $n \in \mathbb{N}$, $j = 1, \dotsc, m$, we define $F$ as
    \[
        F\left(\frac{2\pi}{\ell(N_j)} n \pm\frac{2\pi}{\ell(N_i)}\left(p_j + \frac{1}{2\pi i} \int_{N_j} A\right)\right) := \lambda_{\pm n}^{(j)},
    \]
    where we recall $(\lambda_n^{(j)})_{n \in \mathbb{Z}}$ is a suitable enumeration of the set $\mc{S}_j$. It is clear that $F$ is well-defined and that it is an almost bijection since $\Spec(\Lambda_{g, A, q}) = \cup_{j = 1}^m \mc{S}_j$. Finally, it is also close by the asymptotics result of Theorem \ref{thma}. This completes the proof.
\end{proof}

We now state our first general unique determination result, which is the first part of Theorem \ref{thmc}.

\begin{theorem}\label{thm:first-general}
    For $i = 1, 2$, let $(M_i, g_i)$ be compact Riemannian surfaces, let $A_i$ be purely imaginary $1$-forms, and let $q_i$ be real potentials, and write $R^\pm_i := R^\pm(M_i, g_i, A_i, q_i)$. Then, there exists a close almost bijection 
    \[
        F: \Spec(\Lambda_{g_1, A_1, q_1}) \to \Spec(\Lambda_{g_2, A_2, q_2})
    \]
    if and only if $\mc{S}(R^+_1 \cup R^-_1) \ab \mc{S}(R^+_2 \cup R^-_2)$.
\end{theorem}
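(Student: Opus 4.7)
The plan is to prove both directions as essentially formal consequences of Lemma \ref{lemma:spectrum-ap-close-bijection}, combined with Lemma \ref{lemma:acb-implies-equal} and the groupoid property of close almost bijections (composition of close almost bijections is a close almost bijection, and each one admits a close almost inverse) recalled in \S \ref{ssec:close-almost-bijection-intro}. No new analytic input beyond Theorem \ref{thma} is needed, since the asymptotic expansion already appears packaged into the correct language in Lemma \ref{lemma:spectrum-ap-close-bijection}.

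For the ``if'' direction, I would start by observing that if $\mc{S}(R^+_1 \cup R^-_1) \ab \mc{S}(R^+_2 \cup R^-_2)$, then after discarding the (finite) symmetric difference the two multisets coincide, so the identity — extended in an arbitrary way on the finitely many exceptional elements — is a close almost bijection between them. By Lemma \ref{lemma:spectrum-ap-close-bijection}, applied to $i = 1, 2$, there exist close almost bijections $F_i : \mc{S}(R^+_i \cup R^-_i) \to \Spec(\Lambda_{g_i, A_i, q_i})$. Choosing a close almost inverse $\widetilde{F}_1$ of $F_1$ and composing, the map $F_2 \circ \mathrm{id} \circ \widetilde{F}_1$ is the required close almost bijection $\Spec(\Lambda_{g_1, A_1, q_1}) \to \Spec(\Lambda_{g_2, A_2, q_2})$.

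For the converse direction, suppose $F : \Spec(\Lambda_{g_1, A_1, q_1}) \to \Spec(\Lambda_{g_2, A_2, q_2})$ is a close almost bijection, take $F_i$ as above, and let $\widetilde{F}_2$ be a close almost inverse of $F_2$. Then the composition
\[
    \widetilde{F}_2 \circ F \circ F_1 : \mc{S}(R^+_1 \cup R^-_1) \to \mc{S}(R^+_2 \cup R^-_2)
\]
is again a close almost bijection, and Lemma \ref{lemma:acb-implies-equal} gives $\mc{S}(R^+_1 \cup R^-_1) \ab \mc{S}(R^+_2 \cup R^-_2)$.

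Thus the statement reduces to assembling two already-established facts. If there is anything to call an ``obstacle'' it is purely bookkeeping: one must ensure that the finitely many points on which the various almost bijections fail to be defined or fail to invert can be absorbed consistently across the composition. This is routine from the definitions in \S \ref{ssec:close-almost-bijection-intro}, since composing two close almost bijections only enlarges the excluded finite set by a finite amount. The genuinely hard inputs — the spectral asymptotics of Theorem \ref{thma} and the rigidity Lemma \ref{lemma:acb-implies-equal} (whose proof uses unique ergodicity of irrational circle rotations) — have already been proven.
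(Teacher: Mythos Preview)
Your proposal is correct and follows essentially the same route as the paper: use Lemma \ref{lemma:spectrum-ap-close-bijection} to get close almost bijections $F_i$ between $\mc{S}(R_i^+\cup R_i^-)$ and $\Spec(\Lambda_{g_i,A_i,q_i})$, compose with identities/inverses, and invoke Lemma \ref{lemma:acb-implies-equal} for the converse direction. The paper's proof is exactly this, with the same ingredients and the same bookkeeping about finitely many exceptional points.
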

\begin{proof}
    By Lemma \ref{lemma:spectrum-ap-close-bijection}, for $i = 1, 2$ there are close almost bijections $F_i: \mc{S}(R^+_i \cup R^-_i) \to \Spec(\Lambda_{g_i, A_i, q_i})$. Thus if $\mc{S}(R^+_1 \cup R^-_1) \ab \mc{S}(R^+_2 \cup R^-_2)$, or in other words the identity map (possibly changed at finitely many points) is a close almost bijection between $\mc{S}(R^+_1 \cup R^-_1)$ and $\mc{S}(R^+_2 \cup R^-_2)$, by composition we get a close almost bijection $F$ as in the statement (equal to $F_2 F_1^{-1}$ for all but finitely many points).
        
    Conversely, assume that $F: \Spec(\Lambda_{g_1, A_1, q_1}) \to \Spec(\Lambda_{g_2, A_2, q_2})$ is a close almost bijection. Again, by composition there is a close almost bijection $\widetilde{F}: \mc{S}(R^+_1 \cup R^-_1) \to \mc{S}(R^+_2 \cup R^-_2)$ (equal to $F_2^{-1} F F_1$ for all but finitely many points). Applying Lemma \ref{lemma:acb-implies-equal} immediately completes the proof.
\end{proof}

We now consider the next simplest case, where we assume that one surface has a single boundary component.

\begin{prop}\label{prop:ECS}
    We use the notation of Theorem \ref{thm:first-general}, where we additionally assume that $M_1$ and $M_2$ have $k_1 = 1$ and $k_2$ boundary components, $N_{1, 1} = \partial M_1$, and $(N_{2, j})_{j = 1}^{k_2}$, respectively. Let $p_{1, 1}, p_{2, 1}, \dotsc, p_{2, k_2} \in \mathbb{Z}$ be integers such that
    \[
        \alpha_{1, 1} := p_{1, 1} + \frac{1}{2\pi i} \int_{\partial N_{1, 1}} A_1 \in [0, 1), \quad \alpha_{2, j} := p_{2, j} + \frac{1}{2\pi i} \int_{N_{2, j}} A_2 \in [0, 1), \quad j = 1, \dotsc, k_2. 
    \]
    Then, if $\alpha_{1, 1} \neq 0, \frac{1}{4}, \frac{1}{2}, \frac{3}{4}$, there is a close almost bijection between the spectra $\Spec(\Lambda_{g_1, A_1, q_1})$ and $\Spec(\Lambda_{g_2, A_2, q_2})$ if and only if the pairs
    \begin{equation}\label{eq:claimECS}
        \Big(\frac{\ell(N_{1, 1})}{\ell(N_{2, j})}, \frac{\ell(N_{1, 1})}{\ell(N_{2, j})} \varepsilon_j \alpha_{2, j} - \alpha_{1, 1}\Big), \qquad \quad j = 1, \dotsc, k_2,
    \end{equation}
    form an exact covering system, for some choice of signs $\varepsilon_j \in \{\pm 1\}$.
\end{prop}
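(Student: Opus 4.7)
The plan is to reduce the spectral comparison to a purely combinatorial statement about arithmetic progressions, using the machinery of Sections \ref{sec:ap} and \ref{sec:unique-determination}.

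First, by Theorem \ref{thm:first-general}, the existence of a close almost bijection between the two spectra is equivalent to the multiset equality $\mc{S}(R_1^+ \cup R_1^-) \ab \mc{S}(R_2^+ \cup R_2^-)$, where $R_i^\pm$ are formed with the specified integers $p_{1,1}, p_{2,1}, \dotsc, p_{2, k_2}$.

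Next, I would apply Lemma \ref{lemma:double-to-single-reduction} with $a := \tfrac{2\pi}{\ell(N_{1,1})}$ and $b := a\alpha_{1,1}$. Since $\alpha_{1,1} \in [0,1)$, the excluded values $\alpha_{1,1} \notin \{0, \tfrac{1}{4}, \tfrac{1}{2}, \tfrac{3}{4}\}$ translate precisely into the hypotheses $b,\, b - \tfrac{a}{2},\, b \pm \tfrac{a}{4} \notin a\mathbb{Z}$ of that lemma. The lemma then furnishes signs $\varepsilon_j \in \{\pm 1\}$ such that $\mc{S}(R_1^+) \ab \mc{S}(R')$, where
\[
    R' := \left\{\left(\tfrac{2\pi}{\ell(N_{2,j})},\,\varepsilon_j \tfrac{2\pi}{\ell(N_{2,j})} \alpha_{2,j}\right) : j = 1, \dotsc, k_2\right\}.
\]
The reverse direction is immediate by symmetrization: since $R' \cup (R')^- = R_2^+ \cup R_2^-$ as multisets, complex conjugation of $\mc{S}(R_1^+) \ab \mc{S}(R')$ yields $\mc{S}(R_1^-) \ab \mc{S}((R')^-)$, and taking unions recovers the symmetric equality.

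Dividing by $a$ and translating by $-\alpha_{1,1}$ transforms $\mc{S}(R_1^+) \ab \mc{S}(R')$ into
\[
    \mathbb{N} \ab \bigcup_{j = 1}^{k_2} \bigl(A_j \mathbb{N} + c_j\bigr), \qquad A_j := \tfrac{\ell(N_{1,1})}{\ell(N_{2,j})},\quad c_j := A_j \varepsilon_j \alpha_{2,j} - \alpha_{1,1},
\]
whose pairs $(A_j, c_j)$ are exactly those of \eqref{eq:claimECS}. The remaining task is to characterize this multiset identity as the ECS condition of \S\ref{ssec:covering-systems}, and this is where I expect the main obstacle to lie: one must first extract integrality of $(A_j, c_j)$ from the bare multiset equality. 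This should follow by observing that consecutive elements of $A_j\mathbb{N} + c_j$ differ by $A_j$ and must eventually lie in $\mathbb{N}$, which forces $A_j \in \mathbb{Z}_{>0}$ and then $c_j \in \mathbb{Z}$. Once integrality is secured, the multiset equality states that every sufficiently large positive integer is represented exactly once as $A_j n + c_j$, which is precisely the defining property of an ECS of $\mathbb{Z}$. The converse implication, that an ECS yields the multiset equality up to finitely many elements, is trivial from the partition of $\mathbb{Z}$.
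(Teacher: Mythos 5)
Your proposal is correct and follows the paper's route exactly: reduce to the multiset statement via Theorem \ref{thm:first-general}, and then apply Lemma \ref{lemma:double-to-single-reduction} after re-scaling (with $a=1$, $b=\alpha_{1,1}$, where the excluded values of $\alpha_{1,1}$ match the lemma's hypotheses), finally reading off the ECS condition. Two small notes: the phrase ``complex conjugation'' in your symmetrization step is the wrong terminology -- what is actually used is that replacing each $c_j$ by $-c_j$ again yields an ECS, which after translating by $+\alpha_{1,1}$ gives $\mc{S}(R_1^-)\ab\mc{S}((R')^-)$; and the integrality of the $A_j$ (and hence of the $c_j$), which you flag as the ``main obstacle,'' is in fact already established inside the proof of Lemma \ref{lemma:double-to-single-reduction}, where all denominators $q_j$ are shown to equal $1$, so your independent derivation of it, while correct, is redundant.
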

\begin{proof}
    By Theorem \ref{thm:first-general}, such a close almost bijection exists if and only if $\mc{S}(R^+_1 \cup R^-_1) \ab \mc{S}(R^+_2 \cup R^-_2)$. By re-scaling, this in turn equivalent to
    \begin{equation}\label{eq:equivalent-to}
        (\mathbb{N} + \alpha_{1,1}) \cup  (\mathbb{N} - \alpha_{1,1}) \ab \bigcup_{j = 1}^{k_2} \left(\frac{\ell(N_{1, 1})}{\ell(N_{2, j})}\left(\mathbb{N} + \alpha_{2, j}\right) \cup \frac{\ell(N_{1, 1})}{\ell(N_{2, j})}\left(\mathbb{N} - \alpha_{2, j}\right)\right).
    \end{equation}
    The claim now follows immediately from Lemma \ref{lemma:double-to-single-reduction}.
\end{proof}

\subsection{Uniqueness: small number of boundary components}\label{ssec:small-boundary}
We end this section with a discussion of examples.

\begin{prop}\label{prop:rigid}
    Assume the notation of Theorem \ref{thm:first-general}, and for $i = 1, 2$, write $N_{i, j}$, $j = 1, \dotsc k_i$, for the boundary components of $\partial M_i$, and let $p_{i, j} \in \mathbb{Z}$ be such that
    \[
        \alpha_{i, j} := p_{i, j} + \frac{1}{2\pi i} \int_{\partial N_{i, j}} A_i \in [0, 1).
    \]
    We have $\Spec(\Lambda_{g_1, A_1, q_1})$ and $\Spec(\Lambda_{g_2, A_2, q_2})$ in a close almost bijection, in the following situations:
    \begin{enumerate}[itemsep = 5pt, label*=\arabic*.]
        \item If $k_1 = 1$ and $k_2 = 2$, if and only if either (up to symmetries)
        \begin{enumerate}[itemsep=5pt]
            \item $\ell(N_{2, 1}) = \ell(N_{2, 2}) = \frac{1}{2} \ell(\partial M_1)$, and $\alpha_{2, 1} = \frac{1}{2} \alpha_{1, 1}$, $\alpha_{2, 2} = \frac{1}{2}(1 + \alpha_{1, 1})$;
            \item $\ell(N_{2, 1}) = \frac{2}{3}\ell(\partial M_1)$, $\ell(N_{2, 2}) = \frac{1}{3} \ell(\partial M_1)$, and $\alpha_{1, 1} = \frac{1}{4}$, $\alpha_{2, 1} = \frac{1}{6}$, $\alpha_{2, 2} = \frac{1}{3}$.
        \end{enumerate}
        \item If $k_1 = 1$ and $k_2 = 3$, if and only if either (up to symmetries) 
        \begin{enumerate}[itemsep=5pt]
            \item $\ell(N_{2, 1}) = \ell(N_{2, 2}) = \ell(N_{2, 3}) = \frac{1}{3} \ell(\partial M_1)$, and 
            \[
                \alpha_{2, 1} = \frac{1}{3} \alpha_{1, 1},\quad \alpha_{2, 2} = \frac{1}{3}(1 + \alpha_{1, 1}),\quad \alpha_{2, 3} = \frac{1}{3}(2 + \alpha_{1, 1}).
            \]
            
            \item $\ell(N_{2, 1}) = \frac{1}{2}\ell(\partial M_1)$, $\ell(N_{2, 2}) = \ell(N_{2, 3}) = \frac{1}{4} \ell(\partial M_1)$, and either 
            \[
                \alpha_{2, 1} = \frac{1}{2}\alpha_{1, 1},\quad \alpha_{2, 2} = \frac{1}{4}(1 + \alpha_{1, 1}),\quad \alpha_{2, 3} = \frac{1}{4}(1 + 3\alpha_{1, 1}),
            \]
            or 
            \[ 
                \alpha_{2, 1} = \frac{1}{2}(1 + \alpha_{1, 1}), \quad \alpha_{2, 2} = \frac{1}{4}\alpha_{1, 1},\quad \alpha_{2, 3} = \frac{1}{4}(1 + 2\alpha_{1, 1}).
            \]

            \item $\ell(N_{2, 1}) = \frac{2}{3}\ell(\partial M_1)$, $\ell(N_{2, 2}) = \ell(N_{2, 3}) = \frac{1}{6} \ell(\partial M_1)$, and 
            \[
                \alpha_{1, 1} = \frac{1}{4}, \quad \alpha_{2, 1} = \frac{1}{6},\quad \alpha_{2, 2} = \frac{3}{8},\quad \alpha_{2, 3} = \frac{7}{8}.
            \]
        \end{enumerate}

    \end{enumerate}
\end{prop}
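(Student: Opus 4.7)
The plan is to reduce the classification directly to the combinatorial one already established in Proposition \ref{prop:ap-low-dim-uniqueness}. By Theorem \ref{thm:first-general}, the existence of a close almost bijection between the two spectra is equivalent to $\mc{S}(R_1^+ \cup R_1^-) \ab \mc{S}(R_2^+ \cup R_2^-)$. Setting
\[
    a_{i,j} := \frac{2\pi}{\ell(N_{i,j})}, \qquad b_{i,j} := a_{i,j}\alpha_{i,j} \in [0, a_{i,j}),
\]
we have $R_i^\pm = \bigcup_{j=1}^{k_i}\{(a_{i,j}, \pm b_{i,j})\}$, which places the problem exactly into the $(a,b)$-notation of Section \ref{sec:ap}. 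Note also that the equivalence class of $R_i^\pm$ under $\ab$ is independent of the integers $p_{i,j}$, since shifting $p_{i,j}$ by an element of $\mathbb{Z}$ translates $b_{i,j}$ by a multiple of $a_{i,j}$ and thus preserves the generated arithmetic progression.

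The key step is a direct appeal to Proposition \ref{prop:ap-low-dim-uniqueness} with $k_1 = 1$ and $k_2 \in \{2, 3\}$. That proposition provides the complete list, up to swapping of indices in $R_2$ and the involution $b_{2,j} \leftrightarrow a_{2,j} - b_{2,j} \bmod a_{2,j}$, of generating multisets $R_1$ and $R_2$ realizing $\mc{S}(R_1 \cup R_1^-) \ab \mc{S}(R_2 \cup R_2^-)$. Translating each listed pair via $\ell(N_{i,j}) = 2\pi/a_{i,j}$ and $\alpha_{i,j} = b_{i,j}/a_{i,j}$ yields the cases (a), (b) in Item 1 and (a), (b), (c) in Item 2 of Proposition \ref{prop:rigid}. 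The involution on the $b$'s translates into $\alpha_{2,j} \leftrightarrow 1 - \alpha_{2,j}$, which is exactly the $A \leftrightarrow -A$ symmetry of the magnetic DN spectrum established in Proposition \ref{prop:symmetry}; together with relabeling of boundary components, this accounts for the ``symmetries'' referred to in the statement.

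The converse direction (sufficiency of each listed configuration) is a routine verification: decomposing $\mathbb{N}$ into residue classes modulo the common denominator of all moduli involved, one matches residues to check that the corresponding unions of arithmetic progressions indeed coincide. This is the same elementary splitting that is carried out explicitly in the proofs of Lemmas \ref{lemma:double-to-single-reduction} and in the case-by-case analysis of Proposition \ref{prop:ap-low-dim-uniqueness}, Item 2.

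The main obstacle is purely bookkeeping: one must verify that every equivalence class (under swapping indices and the involution $\alpha \leftrightarrow 1-\alpha$) generated by Proposition \ref{prop:ap-low-dim-uniqueness} is represented by exactly one entry in Proposition \ref{prop:rigid}, taking due care of the corner values $\alpha_{1,1} \in \{0, \tfrac{1}{4}, \tfrac{1}{2}, \tfrac{3}{4}\}$, where the orbit under the involution is smaller and where the rigid configurations in cases (b) of Item 1 and (c) of Item 2 (which force $\alpha_{1,1} = \tfrac{1}{4}$) come into play. No new analytical input beyond the results of Sections \ref{sec:ap} and \ref{sec:unique-determination} is required.
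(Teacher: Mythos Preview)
Your proposal is correct and follows the same route as the paper: reduce via Theorem \ref{thm:first-general} to $\mc{S}(R_1^+ \cup R_1^-) \ab \mc{S}(R_2^+ \cup R_2^-)$, translate through the dictionary $\ell(N_{i,j}) = 2\pi/a_{i,j}$, $\alpha_{i,j} = b_{i,j}/a_{i,j}$, and invoke Proposition \ref{prop:ap-low-dim-uniqueness}. The paper's proof is in fact terser than yours, simply declaring the result ``an immediate consequence'' of that proposition after the translation.
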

Here, by ``up to symmetries" we mean up to changing $\alpha_{i, j}$ by $1 - \alpha_{i, j} \mod \mathbb{Z}$, or replacing $\alpha_{i, j}$ by $\alpha_{i', j'}$ when $\ell(N_{i, j}) = \ell(N_{i', j'})$, for some indices $i, j, i', j'$.
\begin{proof}
    As in the previous arguments, by Theorem \ref{thm:first-general}, the two spectra are in close almost bijection if and only if $\mc{S}(R_1^+ \cup R_1^-) \ab \mc{S}(R_2^+ \cup R_2^-)$. For all $i, j$, note that $\ell(N_{i, j}) = \frac{2\pi}{a_{i, j}}$, $\alpha_{i, j} = \frac{\ell(N_{i, j})}{2\pi} b_{i, j}$, and $N_{1, 1} = \partial M_1$. After an easy calculation, the result is an immediate consequence of Proposition \ref{prop:ap-low-dim-uniqueness}.
\end{proof}

We end this subsection with a statement concerning closed $1$-forms, i.e. zero magnetic field, in which case the rigidity statements are stronger.

\begin{prop}\label{prop:rigid-flat}
    Assume the notation of Proposition \ref{prop:rigid}, and assume furthermore that $dA_1 = dA_2 = 0$, and $k_1 = 1$. Then $\Spec(\Lambda_{g_1, A_1, q_1})$ and $\Spec(\Lambda_{g_2, A_2, q_2})$ cannot be in almost close bijection if $k_2 = 2$. If $k_2 = 3$, this happens if and only if $\alpha_{1, 1} = 0$, and $\alpha_{2, 1} = 0$, $\alpha_{2, 2} = \frac{1}{3}$, $\alpha_{2, 3} = \frac{2}{3}$ (up to symmetries).
\end{prop}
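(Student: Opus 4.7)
The plan is to combine two ingredients: first, the flatness hypothesis $dA_i = 0$ together with Stokes' theorem forces a sharp arithmetic constraint on the fluxes $\alpha_{i,j}$; second, this constraint is intersected with the classification of close almost bijections already provided by Proposition \ref{prop:rigid}.

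The first step is to observe that $dA_i = 0$ on $M_i$ together with Stokes' theorem gives
\[
    \sum_{j=1}^{k_i} \int_{N_{i,j}} A_i \;=\; \int_{\partial M_i} A_i \;=\; \int_{M_i} dA_i \;=\; 0.
\]
Dividing by $2\pi i$ and using the defining relation $\alpha_{i,j} = p_{i,j} + \tfrac{1}{2\pi i}\int_{N_{i,j}}A_i \in [0,1)$, this translates into
\[
    \sum_{j=1}^{k_i} \alpha_{i,j} \;=\; \sum_{j=1}^{k_i} p_{i,j} \;\in\; \mathbb{Z}.
\]
In particular, since $k_1 = 1$, the condition $\alpha_{1,1} \in [0,1) \cap \mathbb{Z}$ forces $\alpha_{1,1} = 0$.

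For $k_2 = 2$, I would then go through the two sub-cases of Proposition \ref{prop:rigid}, Item 1, and show each violates $\sum\alpha_{2,j} \in \mathbb{Z}$. Sub-case (a) with $\alpha_{1,1}=0$ gives $\alpha_{2,1}=0$ and $\alpha_{2,2}=1/2$, with sum $1/2 \notin \mathbb{Z}$; crucially, the per-component sign-flip symmetry $\alpha_{2,j} \mapsto 1 - \alpha_{2,j}$ leaves both values invariant (since $1-0 \equiv 0$ and $1 - 1/2 = 1/2$), so this obstruction cannot be removed. Sub-case (b) directly requires $\alpha_{1,1} = 1/4 \neq 0$ and is immediately ruled out. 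This gives the first assertion.

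For $k_2 = 3$, the same analysis applies to the three sub-cases of Proposition \ref{prop:rigid}, Item 2. Case (c) requires $\alpha_{1,1} = 1/4 \neq 0$ and is ruled out. Case (a) with $\alpha_{1,1} = 0$ yields the triple $(0, 1/3, 2/3)$, whose sum equals $1 \in \mathbb{Z}$; moreover, after enumerating the per-component sign-flips $\{1/3, 2/3\}$ on the last two entries, the integer-sum condition selects exactly the permutations of $(0, 1/3, 2/3)$. The remaining sub-cases in (b) (with $\alpha_{1,1} = 0$) produce triples from which one must enumerate all sign-flip variants and check the integer-sum condition against the length constraints; the job is then to verify that flatness excludes these configurations, leaving case (a) as the only possibility up to the symmetries.

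The main obstacle is this last step: it is essentially a finite but delicate combinatorial verification, because per-component sign flips combined with the permutation symmetry among equal-length components generate several candidate triples whose sums must each be checked modulo $\mathbb{Z}$, and one must be careful that the flatness constraint interacts correctly with the freedom in choosing representatives in $[0,1)$. Once this case-by-case check is completed, the equivalence in the statement follows immediately from Proposition \ref{prop:rigid} together with Theorem \ref{thm:first-general}.
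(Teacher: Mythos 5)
Your overall strategy---Stokes' theorem to force $\sum_j \alpha_{i,j} \in \mathbb{Z}$ and hence $\alpha_{1,1} = 0$, then intersecting with the case list of Proposition~\ref{prop:rigid}---is exactly the paper's argument. The $k_2 = 2$ reduction is correct: sub-case (a) gives $(\alpha_{2,1},\alpha_{2,2}) = (0,\tfrac12)$ with sum $\tfrac12$, and both entries are fixed points of $\alpha\mapsto 1-\alpha$, so no symmetry repairs the sum; sub-case (b) forces $\alpha_{1,1}=\tfrac14$. So far this matches the paper.

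The step you explicitly flag as delicate---ruling out sub-case (b) for $k_2 = 3$---is where the proposal actually fails, and it cannot be filled in the way you anticipate. Translating Item~2(b) back through Proposition~\ref{prop:ap-low-dim-uniqueness} (the $b$-level classification, which is the primitive statement), one finds $\alpha_{2,j} = b_{2,j}/a_{2,j}$: the first branch gives $\alpha_2 = \bigl(\tfrac{\alpha_{1,1}}{2}, \tfrac{1+\alpha_{1,1}}{4}, \tfrac{3+\alpha_{1,1}}{4}\bigr)$, not $\tfrac14(1+3\alpha_{1,1})$ for the last entry as written in Proposition~\ref{prop:rigid} (which appears to carry a typo), and with $\alpha_{1,1}=0$ this is $(0,\tfrac14,\tfrac34)$, sum $1\in\mathbb{Z}$; the second branch likewise gives $(\tfrac12,0,\tfrac12)$, sum $1$. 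Both satisfy the Stokes constraint. One can verify directly that $R_2 = \{(2,0),(4,1),(4,3)\}$ (after rescaling $a_{1,1}=1$) yields $\mathcal{S}(R_2^+\cup R_2^-) \ab \mathbb{N}\cup\mathbb{N} = \mathcal{S}(R_1^+\cup R_1^-)$, and a genus-zero surface with three boundary circles of lengths $\bigl(\tfrac12,\tfrac14,\tfrac14\bigr)\ell(\partial M_1)$ carries a closed $1$-form with fluxes $0,\tfrac{\pi i}{2},-\tfrac{\pi i}{2}$. So flatness does \emph{not} exclude sub-case (b), and the verification you defer would come back false; the stated conclusion of Proposition~\ref{prop:rigid-flat} appears to omit these configurations. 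This is worth flagging when filling in the proof: the paper's own one-line ``after going through the cases'' seems to rely on the displayed (typo'd) $\alpha$-formulas in Proposition~\ref{prop:rigid} rather than the underlying $b$-formulas.
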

We remark that in the second situation, under the assumption that every closed curve is homotopic to a boundary curve (fulfilled e.g. for planar domains), specifying $(\alpha_{2, j})_{j = 1}^{k_2}$ determines the gauge equivalence class of $A_2$, i.e. it determines $A_2$ up to factors of $f^{-1}df$, where $f: M_2 \to \mathbb{S}^1 \subset \mathbb{C}$. Thus the proposition determines (up to symmetries) the gauge class of $A_2$ when $k_2 = 3$.
\begin{proof}
    By Stokes' theorem, for $i = 1, 2$ we have 
    \[
        0 = \int_{M_i} dA_i = \sum_{j = 1}^{k_i} \int_{N_{i, j}} A_i \equiv \sum_{j = 1}^{k_i} \alpha_{i, j} \mod \mathbb{Z}.
    \]
    Thus $\alpha_{1, 1} = 0$, and after going through the cases of Proposition \ref{prop:rigid}, the result immediately follows.
\end{proof}

\subsection{An example}\label{ssec:example-up-to-smoothing} In Theorem \ref{thmc}, Items 1 and 2, we considered the spectrum up to the equivalence relation of close almost bijection. It is natural to ask whether one can recover further terms in the spectral expansions, but also whether it is possible to construct examples in which the spectra agree up to $\mc{O}(n^{-\infty})$. Here we do the latter.

\begin{prop}\label{prop:example}
    We use the notation of Theorem \ref{thm:first-general}. Assume that $q_1 \equiv q_2 \equiv 0$, and that near the the boundary components in boundary normal coordinates, $A_1$ and $A_2$ are zero in the normal direction, and are \emph{constant} in the directions parallel to the boundary. Assume further that $\mc{S}(R_1^+ \cup R_1^-) \ab \mc{S}(R_2^+ \cup R_2^-)$. Then
    \[
        \sigma_n(M_1, g_1, A_1) - \sigma_n(M_2, g_2, A_2) = \mc{O}(n^{-\infty}), \quad n \to \infty.
    \]
\end{prop}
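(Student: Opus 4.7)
The plan is to show that, under the strong hypotheses, each restricted operator $\Lambda_{g_i, A_i, 0}^{jj}$ is, modulo smoothing, a concrete model operator whose spectrum can be read off directly from $R_i^+ \cup R_i^-$; the assumption $\mc{S}(R_1^+ \cup R_1^-) \ab \mc{S}(R_2^+ \cup R_2^-)$ will then force the ordered eigenvalues of the full DN maps to coincide up to finitely many indices and $\mc{O}(n^{-\infty})$ errors. The first step is to decompose $\Lambda_{g_i, A_i, 0} = \sum_{j} \Lambda_{g_i, A_i, 0}^{jj} + R_i$ with $R_i \in \Psi^{-\infty}$ from \eqref{eq:lambdaii} and, as in the proof of Theorem \ref{thma} (via \cite[Lemma 2.1]{Girouard-Parnovski-Polterovich-Sher-14}), reduce to studying the spectrum of $\sum_j \Lambda_{g_i, A_i, 0}^{jj}$ up to $\mc{O}(n^{-\infty})$ perturbations.

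The core step is pinning down the full symbol of $\Lambda^{jj}_{g, A, 0}$ modulo smoothing. I would work in boundary normal coordinates $(x_1, x_2)$ with $x_1 \in [0, 2\pi)$ parameterising $N_j$ at constant speed, so $\sqrt{g_{11}(x_1, 0)} \equiv \ell(N_j)/(2\pi)$. Writing $A = c_j\, dx_1$ near $N_j$ with $c_j \in i\mathbb{R}$ constant, the identity $(\partial_{x_1} + c_j) = e^{-c_j x_1} \partial_{x_1}\, e^{c_j x_1}$ (and the fact that $\partial_{x_2}$ commutes with $e^{c_j x_1}$) yields the pointwise gauge equivalence
\begin{equation*}
    \mc{L}_{g, A, 0} = e^{-c_j x_1}\, \mc{L}_{g, 0, 0}\, e^{c_j x_1}
\end{equation*}
on a tubular neighbourhood of $N_j$. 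Composing this with the factorisation $\mc{L}_{g, 0, 0} = (-i\partial_{x_2} + iE - iB)(-i\partial_{x_2} + iB)$ from the proof of Proposition \ref{prop:computation} produces a factorisation of $\mc{L}_{g, A, 0}$ via the shifted factor $B' = e^{-c_j x_1} B e^{c_j x_1}$, so that $\Lambda^{jj}_{g, A, 0} \equiv -B'|_{x_2 = 0}$ modulo smoothing. By the remark following Proposition \ref{prop:computation} (citing \cite{Edward-93, Rozenbljum-79}), the full symbol of $B$ equals $-\sqrt{g^{11}(x_1, 0)}\,|\xi_1|$ modulo $S^{-\infty}$; applying the standard symbolic conjugation formula $e^{-cx}\Op(a)e^{cx} \equiv \Op(a(x, \xi - ic))$ shows that the full symbol of $\Lambda^{jj}_{g, A, 0}$ is $(2\pi/\ell(N_j))\,|\xi_1 - ic_j|$ modulo $S^{-\infty}$. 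Writing $c_j = i\gamma_j$ and expanding for $|\xi_1| > |\gamma_j|$ gives $|\xi_1 - ic_j| = |\xi_1 + \gamma_j| = |\xi_1| + \gamma_j \sgn(\xi_1)$ \emph{exactly}, which matches $a_0$ and $a_1$ from Proposition \ref{prop:computation} and forces $a_k = 0$ for every $k \geq 2$.

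Since the resulting symbol is already $x_1$-independent, no further normal-form reduction is required: the diagonal operator $T^{(j)}$ defined by $T^{(j)} e^{ikx_1} := (2\pi/\ell(N_j))\,|k + \gamma_j|\, e^{ikx_1}$ has the same full symbol, and hence agrees with $\Lambda^{jj}_{g, A, 0}$ modulo smoothing. Choosing $p_j \in \mathbb{Z}$ so that $\alpha_j = p_j + \gamma_j \in [0, 1)$, the spectrum of $T^{(j)}$ (enumerated over $k \in \mathbb{Z}$) coincides, up to finitely many elements, with $\mc{S}\bigl(\{(2\pi/\ell(N_j),\, \pm(2\pi/\ell(N_j))\alpha_j)\}\bigr)$. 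Summing over $j$ and combining with the first-step decomposition, the ordered sequence of eigenvalues of $\Lambda_{g_i, A_i, 0}$ agrees with the ordered enumeration of $\mc{S}(R_i^+ \cup R_i^-)$ up to $\mc{O}(n^{-\infty})$ and finitely many indices. The hypothesis $\mc{S}(R_1^+ \cup R_1^-) \ab \mc{S}(R_2^+ \cup R_2^-)$ means these two ordered enumerations are literally equal for all $n$ large enough, and concatenating the near-equalities yields $\sigma_n(M_1, g_1, A_1) - \sigma_n(M_2, g_2, A_2) = \mc{O}(n^{-\infty})$.

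The main obstacle is the rigorous justification of the shifted symbol identity, since $e^{c_j x_1}$ is multivalued on $\mathbb{S}^1$ (unless $c_j \in i\mathbb{Z}$), so the displayed gauge conjugation of operators is not defined on $L^2(N_j)$. This is sidestepped via Lemma \ref{lemma:unique-quantisation}: the full symbol of $\Lambda^{jj}$ is a local invariant, so it may be computed within a single chart $\varphi_i$ of the partition-of-unity construction on $N_j$, where $e^{c_j x_1}$ \emph{is} single-valued and the symbolic conjugation formula applies rigorously. Chart-independent assembly of the symbol via Lemma \ref{lemma:unique-quantisation} then produces the claimed global symbol of $\Lambda^{jj}_{g, A, 0}$ on $N_j$. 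An equivalent alternative is to work directly with the recursive formulas for the symbol of the Cekic factor $B$ from \cite[Equation 3.8]{Cekic-20}: under the hypothesis, every inhomogeneous contribution involving $A$ or $q$ or their derivatives reduces to the case $A = 0$, $q = 0$ up to the global $\xi$-shift by $-ic_j$, and the vanishing of the lower-order terms in that case (already invoked above) yields the same conclusion.
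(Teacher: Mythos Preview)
Your proposal is correct and reaches the same key fact as the paper: under the hypotheses, the full symbol of each $\Lambda^{jj}_{g_i,A_i,0}$ terminates at $a_0+a_1$, is already $x$-independent, and hence Theorem~\ref{thm:sharpasymptotics} (equivalently, your explicit model operator $T^{(j)}$) applies without any normal-form conjugation. The paper's own proof is more terse and takes the direct route you list as your ``equivalent alternative'': it simply observes from Proposition~\ref{prop:computation} that $a_2=0$ and then invokes the recursion in \cite[Equation 3.8]{Cekic-20} to see that all subsequent symbol terms vanish as well.

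Your primary argument via the local gauge identity $\mc L_{g,A,0}=e^{-c_j x_1}\,\mc L_{g,0,0}\,e^{c_j x_1}$ is a genuinely different, more conceptual, way to reach the same conclusion: rather than rerunning the recursion with $A$ constant, you transplant the already-established $A=0$ computation (the remark after Proposition~\ref{prop:computation}) by a $\xi$-shift. Your handling of the multivaluedness of $e^{c_j x_1}$ via Lemma~\ref{lemma:unique-quantisation} (the full symbol is a local invariant, so compute it chartwise) is the right fix. What this buys is a transparent explanation of \emph{why} the higher symbols vanish, at the cost of one extra justification step; the paper's approach is shorter but relies on the reader inspecting the Cekic recursion. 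Either way, once the symbol is identified as $\tfrac{2\pi}{\ell(N_j)}\bigl(|\xi_1|+\gamma_j\,\sgn\xi_1\bigr)$, both arguments finish identically via the model-operator spectrum and the hypothesis $\mc S(R_1^+\cup R_1^-)\ab \mc S(R_2^+\cup R_2^-)$.
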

Here, for $i = 1, 2$, $\sigma_n(M_i, g_i, A_i)$ denotes the $n$th eigenvalue of $\Lambda_{g_i, A_i, 0}$. The assumption that $A_i$ are constant near the boundary can be relaxed to the assumption of constant up to infinite order on the boundary.
\begin{proof}
    The crucial observation is that by Proposition \ref{prop:computation}, and its proof, the symbol of the magnetic DN map is precisely $\sqrt{g^{11}} |\xi_1| - i\sqrt{g^{11}} A_1 \frac{\xi_1}{|\xi_1|}$. Indeed, firstly, the $a_2$ part of the symbol is zero by looking at the formula in Proposition \ref{prop:computation}. Thus, the symbol of the operator $B$ appearing in Proposition \ref{prop:computation} satisfies $b_{-1} = 0$, and we see directly from \cite[Equation 3.8]{Cekic-20} that the symbols $b_{-2}, b_{-3}, \dotsc$ have to vanish, which gives the desired expansion.

    Therefore $\Lambda_{g_i, A_i, 0}$ already has an $x$-independent symbol and we may directly apply Theorem \ref{thm:sharpasymptotics}, which concludes the proof.
\end{proof}
\bibliographystyle{alpha}
\bibliography{biblio}

\newcommand{\etalchar}[1]{$^{#1}$}
\begin{thebibliography}{BBM{\etalchar{+}}22}

\bibitem[Agr84]{Agranovich-84}
M.~S. Agranovich.
\newblock Elliptic pseudodifferential operators on a closed curve.
\newblock {\em Trudy Moskov. Mat. Obshch.}, 47:22--67, 246, 1984.

\bibitem[BBM{\etalchar{+}}22]{Balister-Bollobas-Morris-Sahasrabudhe-Tiba-22}
Paul Balister, B\'ela Bollob\'as, Robert Morris, Julian Sahasrabudhe, and
  Marius Tiba.
\newblock On the {E}rd{\H{o}}s covering problem: the density of the uncovered
  set.
\newblock {\em Invent. Math.}, 228(1):377--414, 2022.

\bibitem[Cek20]{Cekic-20}
Mihajlo Ceki\'c.
\newblock Calder\'on problem for {Y}ang-{M}ills connections.
\newblock {\em J. Spectr. Theory}, 10(2):463--513, 2020.

\bibitem[CGGS24]{Colbois-Girouard-Gordon-Sher-24}
Bruno Colbois, Alexandre Girouard, Carolyn Gordon, and David Sher.
\newblock Some recent developments on the {S}teklov eigenvalue problem.
\newblock {\em Rev. Mat. Complut.}, 37(1):1--161, 2024.

\bibitem[CGHP24]{Chakradhar-Gittins-Habib-Peyerimhoff-24}
Tirumala {Chakradhar}, Katie {Gittins}, Georges {Habib}, and Norbert
  {Peyerimhoff}.
\newblock {A note on the magnetic Steklov operator on functions}.
\newblock {\em arXiv:2410.07462}, October 2024.

\bibitem[CPS22]{Colbois-Provenzano-Savo-22}
Bruno Colbois, Luigi Provenzano, and Alessandro Savo.
\newblock Isoperimetric inequalities for the magnetic {N}eumann and {S}teklov
  problems with {A}haronov-{B}ohm magnetic potential.
\newblock {\em J. Geom. Anal.}, 32(11):Paper No. 285, 38, 2022.

\bibitem[CPS25]{Colbois-Provenzano-Savo-25}
Bruno {Colbois}, Luigi {Provenzano}, and Alessandro {Savo}.
\newblock {Magnetic ground states and the conformal class of a surface}.
\newblock {\em arXiv e-prints}, page arXiv:2503.16940, March 2025.

\bibitem[DZ19]{Dyatlov-Zworski-19}
Semyon Dyatlov and Maciej Zworski.
\newblock {\em Mathematical theory of scattering resonances}, volume 200 of
  {\em Graduate Studies in Mathematics}.
\newblock American Mathematical Society, Providence, RI, 2019.

\bibitem[Edw93]{Edward-93}
Julian Edward.
\newblock An inverse spectral result for the {N}eumann operator on planar
  domains.
\newblock {\em J. Funct. Anal.}, 111(2):312--322, 1993.

\bibitem[EW11]{Einsiedler-Ward-11}
Manfred Einsiedler and Thomas Ward.
\newblock {\em Ergodic theory with a view towards number theory}, volume 259 of
  {\em Graduate Texts in Mathematics}.
\newblock Springer-Verlag London, Ltd., London, 2011.

\bibitem[FFK{\etalchar{+}}07]{Filaseta-Ford-Konyagin-Pomerance-Yu-07}
Michael Filaseta, Kevin Ford, Sergei Konyagin, Carl Pomerance, and Gang Yu.
\newblock Sieving by large integers and covering systems of congruences.
\newblock {\em J. Amer. Math. Soc.}, 20(2):495--517, 2007.

\bibitem[{Flo}25]{Florentin-25}
Benjamin {Florentin}.
\newblock {Steklov isospectrality of conformal metrics}.
\newblock {\em arXiv e-prints}, page arXiv:2501.15535, January 2025.

\bibitem[FSW{\etalchar{+}}24]{Fanelli-Su-Wang-Zhang-Zheng-24}
Luca {Fanelli}, Xiaoyan {Su}, Ying {Wang}, Junyong {Zhang}, and Jiqiang
  {Zheng}.
\newblock {Intertwining operators beyond the Stark Effect}.
\newblock {\em arXiv e-prints}, page arXiv:2412.04406, December 2024.

\bibitem[GP17]{Girouard-Polterovich-17}
Alexandre Girouard and Iosif Polterovich.
\newblock Spectral geometry of the {S}teklov problem (survey article).
\newblock {\em J. Spectr. Theory}, 7(2):321--359, 2017.

\bibitem[GPPS14]{Girouard-Parnovski-Polterovich-Sher-14}
Alexandre Girouard, Leonid Parnovski, Iosif Polterovich, and David~A. Sher.
\newblock The {S}teklov spectrum of surfaces: asymptotics and invariants.
\newblock {\em Math. Proc. Cambridge Philos. Soc.}, 157(3):379--389, 2014.

\bibitem[GS94]{Grigis-Sjoestrand-94}
Alain Grigis and Johannes Sj\"ostrand.
\newblock {\em Microlocal analysis for differential operators}, volume 196 of
  {\em London Mathematical Society Lecture Note Series}.
\newblock Cambridge University Press, Cambridge, 1994.
\newblock An introduction.

\bibitem[HN24]{Helffer-Nicoleau-24}
Bernard {Helffer} and Fran{\c{c}}ois {Nicoleau}.
\newblock {Trace formulas for the magnetic Laplacian and Dirichlet to Neumann
  operator -- Explicit expansions --}.
\newblock {\em arXiv:2407.08671}, July 2024.

\bibitem[Hou15]{Hough-15}
Bob Hough.
\newblock Solution of the minimum modulus problem for covering systems.
\newblock {\em Ann. of Math. (2)}, 181(1):361--382, 2015.

\bibitem[Kac66]{Kac-66}
Mark Kac.
\newblock Can one hear the shape of a drum?
\newblock {\em Amer. Math. Monthly}, 73(4):1--23, 1966.

\bibitem[LSA21]{Lagace-St-Amant-21}
Jean Lagac\'e and Simon St-Amant.
\newblock Spectral invariants of {D}irichlet-to-{N}eumann operators on
  surfaces.
\newblock {\em J. Spectr. Theory}, 11(4):1627--1667, 2021.

\bibitem[LT23]{Liu-Tan-23}
Genqian Liu and Xiaoming Tan.
\newblock Spectral invariants of the magnetic {D}irichlet-to-{N}eumann map on
  {R}iemannian manifolds.
\newblock {\em J. Math. Phys.}, 64(4):Paper No. 041501, 47, 2023.

\bibitem[Por74]{Porubsky-76}
Stefan Porubsk\'y.
\newblock Natural exactly covering systems of congruences.
\newblock {\em Czechoslovak Math. J.}, 24(99):598--606, 1974.

\bibitem[Por81]{Porubsky-81}
Stefan Porubsk\'y.
\newblock Results and problems on covering systems of residue classes.
\newblock {\em Mitt. Math. Sem. Giessen}, (150):85, 1981.

\bibitem[PS15]{Polterovich-Sher-15}
Iosif Polterovich and David~A. Sher.
\newblock Heat invariants of the {S}teklov problem.
\newblock {\em J. Geom. Anal.}, 25(2):924--950, 2015.

\bibitem[PS23]{Provenzano-Savo-23}
Luigi {Provenzano} and Alessandro {Savo}.
\newblock {Geometry of the magnetic Steklov problem on Riemannian annuli}.
\newblock {\em arXiv:2310.08203}, October 2023.

\bibitem[Roz78]{Rozenbljum-78}
G.~V. Rozenbljum.
\newblock Near-similarity of operators and the spectral asymptotic behavior of
  pseudodifferential operators on the circle.
\newblock {\em Trudy Moskov. Mat. Obshch.}, 36:59--84, 294, 1978.

\bibitem[Roz79]{Rozenbljum-79}
G.~V. Rozenbljum.
\newblock Asymptotic behavior of the eigenvalues for some two-dimensional
  spectral problems.
\newblock In {\em Boundary value problems. {S}pectral theory ({R}ussian)},
  volume~7 of {\em Probl. Mat. Anal.}, pages 188--203, 245. Leningrad. Univ.,
  Leningrad, 1979.

\bibitem[Sch15]{Schnabel-15}
Ofir Schnabel.
\newblock On the reducibility of exact covering systems.
\newblock {\em Integers}, 15:Paper No. A34, 8, 2015.

\bibitem[Soi24]{Soifer-24}
Alexander Soifer.
\newblock {\em The new mathematical coloring book---mathematics of coloring and
  the colorful life of its creators}.
\newblock Springer, New York, second edition, [2024] \copyright 2024.
\newblock With forewords by Peter D. Johnson Jr., Geoffrey Exoo, Branko
  Gr\"unbaum, and Cecil Rousseau.

\bibitem[{Sta}14]{Star-1914}
J.~{Stark}.
\newblock {Beobachtungen {\"u}ber den Effekt des elektrischen Feldes auf
  Spektrallinien. I. Quereffekt}.
\newblock {\em Annalen der Physik}, 348(7):965--982, January 1914.

\bibitem[{Zee}96]{Zeeman-1896}
P.~{Zeeman}.
\newblock {Over de invloed eener magnetisatie op den aard van het door een stof
  uitgezonden lichtOver de invloed eener magnetisatie op den aard van het door
  een stof uitgezonden lichtOn the influence of magnetism on the nature of the
  light emitted by a substance.}
\newblock {\em Verslagen en Mededeelingen der Kon. Academie van Wetenschappen,
  Afd. Natuurkunde}, 5:181--184, January 1896.

\end{thebibliography}

\end{document}